\newtheorem{theorem}{Theorem}[section]
\newtheorem{lemma}{Lemma}[section]
\newtheorem{proposition}{Proposition}[section]
\newtheorem{remark}{Remark}[section]
\newtheorem{definition}{Definition}[section]
\newtheorem{example}{Example}[section]
\newtheorem{hypo}{Hypothesis}[section]
\newcommand{\Z}{{\mathbb Z}} 
\newcommand{\R}{{\mathbb R}} 
\newcommand{\C}{{\mathbb C}} 
\newcommand{\N}{{\mathbb N}} 
\newcommand{\T}{{\mathbb T}} 
\newcommand{\dd}{{\rm d}}
\newcommand{\Hs}{{H^\sigma}}
\newcommand{\Hss}{{(H^\sigma)^s}}
\title[High order linearly implicit methods for semilinear evolution PDEs]
{High order linearly implicit methods for semilinear evolution PDEs}
\author[G. Dujardin]{Guillaume Dujardin}
\address[G. Dujardin]{Univ. Lille, Inria, CNRS, UMR 8524 - Laboratoire Paul Painlev\'e F-59000}
\author[I. Lacroix-Violet]{Ingrid Lacroix-Violet}
\address[I. Lacroix-Violet]{Universit{\'e} de Lorraine, CNRS, IECL, F-54000 Nancy, France}
\begin{document}


\begin{abstract}
  This paper considers the numerical integration of semilinear evolution PDEs using the high order linearly implicit methods developped in \cite{DL2020} in the ODE setting. These methods use a collocation Runge--Kutta method as a basis, and additional variables that are updated explicitly and make the implicit part of the collocation Runge--Kutta method only linearly implicit. In this paper, we introduce several notions of stability for the underlying Runge--Kutta methods as well as for the explicit step on the additional variables necessary to fit the context of evolution PDE. We prove a main theorem about the high order of convergence of these linearly implicit methods in this PDE setting, using the stability hypotheses introduced before. We use nonlinear Schr\"odinger equations and heat equations as main examples but our results extend beyond these two classes of evolution PDEs. We illustrate our main result numerically in dimensions 1 and 2, and we compare the efficiency of the linearly implicit methods with other methods from the litterature. We also illustrate numerically the necessity of the stability conditions of our main result.
\end{abstract}


\maketitle
%



\section{Introduction}\label{sec:intro}

%
%
%

High order linearly implicit methods for the time integration of evolution problems have been derived,
analysed and implemented recently in \cite{DL2020}.
In the context of evolution ODEs, \cite{DL2020} provides sufficient and constructive conditions
to achieve any arbitrarily high order with such methods.
The goal of this paper is to extend the design and analysis of such methods
to the case of semilinear evolution PDEs, that we semi-discretize in time.
We mostly focus on nonlinear Schr\"odinger equations (NLS)
and nonlinear heat equations (NLH),
even if the analysis can also be extended to other semilinear equations of the form
\begin{equation}
    \label{eq:general}
    \hspace{3.5cm}\partial_t u (t,{\bf x}) = Lu(t,{\bf x}) + N(u(t,{\bf x}))u(t,{\bf x}),
    \qquad \qquad \qquad \qquad
    t\geq 0, {\bf x}\in \Omega,
\end{equation}
where $L$ is an unbounded linear operator on some Banach space $X$ of functions on some $\Omega$
with appropriate boundary conditions
The function $N$ is a nonlinear function
of the unknown $u$ from $X$ to itself.
The unknown $u$ is a real or complex-valued function of time $t\geq 0$ and space ${\bf x}\in\Omega$.
The autonomous evolution equation \eqref{eq:general}
is supplemented with an initial condition at $t=0$.
More precise hypotheses and their corresponding
functional framework are introduced below.
For example, the cubic NLS may correspond to $N(u)=i q |u|^2$ (for some $q\in\R$)
and $L=i\Delta$, and the cubic NLH may correspond to $L=\Delta$ and $N(u)=\pm |u|^2$.
In classical cases depending on the geometry of $\Omega$ and the choice of boundary
conditions ({\it e.g.} $\Omega=\R^d$ or $\Omega=(\R/\Z)^d=\T^d$),
and the choice of $X$, the localization of the spectrum of $L$ is known.
Indeed, the spectrum of $L$ lies on the purely imaginary axis for NLS,
and in the left hand side of the complex plane for NLH.
In particular, we provide in this paper sufficient conditions to achieve any arbitrarily high order
with linearly implicit methods for the NLS and NLH equations.
Our results of course extend to other semilinear evolution PDEs.

The numerical integration of such problems has a long history, and
numerical schemes have been derived and analysed in several contexts.
For the NLS equation, let us mention for example \cite{DFP81} and \cite{DSS20}
where the Crank-Nicolson scheme is studied,
\cite{WH86} where the Lie--Trotter split step method is introduced,
\cite{Akrivis91} where some Runge--Kutta methods are used with Galerkin space discretization,
\cite{Besse2004} where a relaxation method is introduced,
which is proved to be of order $2$ in \cite{BDDLV19}.
Methods with higher order in time, for example exponential methods \cite{DUJ09,nous2017} or splitting
methods (see \cite{BBD02} \cite{Lubich08} for error estimates), have also been designed and studied.
The numerical behaviour in the semiclassical limit has also been studied \cite{Klein2007,BCM13}.
For the NLH equation, one may mention Crank--Nicolson \cite{Akrivis2006} as well as
splitting methods \cite{Descombes01,DM04,CCV09},
exponential methods \cite{HO05} \cite{HO10}, including Lawson and exponential Runge--Kutta methods.

Some of these methods are fully implicit ({\it e.g.} Crank--Nicolson),
some are fully explicit ({\it e.g.} a Lawson method based on an explicit Runge--Kutta method),
and some are linearly implicit ({\it e.g.} for the NLS equation, the relaxation scheme
of \cite{Besse2004}; for nonlinear parabolic equations,
the linearly implicit Rosenbrock methods and $W$-methods analysed in \cite{LO95},
the linearly implicit multistep methods analysed in \cite{Akrivis04},
the IMEX schemes analysed in \cite{Calvo2001}).
In some sense, the methods proposed and analysed in this paper appear as generalizations
of the relaxation method of \cite{Besse2004},
and form a new class (introduced in \cite{DL2020}) of linearly implicit methods,
different from the ones listed above.
Indeed, they are linearly implicit : at the cost of introducing extra variables,
they require only the solution of one linear system per time step
so that they do not require CFL conditions to ensure stability that usually appear in explicit methods.
Of course, the solution of one high dimensional linear system per time step has a computational cost,
when compared to other high order methods.
For the solution of such a system, one may rely on very efficient techniques,
either direct (LU factorization, Choleski factorization, etc) or iterative
(Jacobi method, Gauss--Seidel method, conjugate gradient, Krylov subspace method, etc \cite{Saad03}),
depending on the structure of the problem at hand.
Depending on the problem, linearly implicit methods can outperform high order
methods from the litterature.
In particular, the linearly implicit methods below can be competitive when the spectrum of the
linear operator $L$ is unknown (only the localization of the spectrum in the complex plane matters,
in some sense), while exponential methods may be faster for problems where that
spectrum is known. An example can be found in Section 3.2.2 of \cite{DL2020} for an NLS equation
on a domain where the spectrum of the Laplace operator is not known and a linearly implicit method
of order $2$ belonging to the class studied below outperforms the Strang splitting method.

Other authors have considered adding extra variables for the time integration of evolution problems.
For example, \cite{ShenXu18,ChengShen18} introduced scalar auxiliary variable methods (SAV)
and multiple scalar auxiliary variable methods (MSAV).
These methodes were introduced to produce unconditionnaly stable schemes
for dissipative problems with gradient flow structure.
The auxiliary variables in this context are used to ensure discrete energy decay.
The order of the methods (1 or 2 in the references above) was not the
main issue. In contrast, the linearly implicit methods analysed in this paper achieve
arbitratily high order in time.

The reader may refer to the Introduction section of \cite{DL2020} for the comparision of the linearly
implicit methods introduced there and analysed here in a PDE context, and other classical time
integration methods from the litterature (one-step methods, including Runge--Kutta methods,
linear multistep methods, composition methods, {\it etc}).

The linearly implicit methods of \cite{DL2020},
that we further analyse below for NLS and NLH equations,
embed a classical collocation Runge--Kutta method with
additional variables, in the spirit of the relaxation method introduced by Besse \cite{Besse2004}.
The analysis of the convergence of these linearly implicit methods applied
to semilinear problems like NLS and NLH relies on stability hypotheses of two different kinds :
on the classical collocation Runge--Kutta method used on the one hand,
and on the explicit update formula for the additional variables.
For the stability of the Runge--Kutta method itself, we refer to the work of
Crouzeix and Raviart \cite{Crouzeix1974,Crouzeix80}.
In this paper, we generalize some stability
notions defined in these works (see Definition \ref{def:stab} below and also \cite{DL2023RK}),
that allow to ensure global stability of our linearly implicit methods.
For the stability of the explicit update of the additional
variables, we extend results of \cite{DL2020} in the EDO case to the PDE \eqref{eq:general}.

This paper is organized as follows.
Section \ref{sec:descriptionmethodes} describes the extension of the linearly implicit
methods of \cite{DL2020} to the PDE setting.
First, we adapt in Section \ref{subsec:underlyingRK} the classical Runge--Kutta framework for ODEs
to the PDE context. This allows for a description of the linearly implicit methods in the PDE
setting in Section \ref{subsec:linimplmeth}. In Section \ref{subsec:functionalframework}, we introduce
the functional framework that allows to ensure the stability of the explicit
update of the additional variables (Lemma \ref{lem:normeD}).
In Section \ref{sec:CVlinimpl}, we analyse the consistency and convergence errors
of the linearly implicit methods in Section \ref{subsec:consistencyandconvergenceerrors},
and the stability of the linear Runge--Kutta step in Section \ref{subsec:analysisofthequasiRKstep}.
This allows for exposing our hypotheses on the exact solution of \eqref{eq:general}
as well as on the numerical parameters in Section \ref{subsec:hypoandresult}
to ensure in our main result (Theorem \ref{th:convergence})
that linearly implicit methods actually converge with high order.
The proof of the theorem is provided in Section \ref{subsec:proof}.
Section \ref{sec:massconservation} is devoted to proving that
for the NLS equation, the Cooper condition on the coefficients of the underlying collocation
Runge--Kutta method ensures that the mass is preserved by the linearly implicit numerical method.
We provide numerical experiments in Section \ref{sec:num}.
We consider NLS equations in 1D (methods of order 2 in Section \ref{subsubsec:methodesordre21D},
methods of order 4 in Section \ref{subsubsec:methodesordre41D} and a method of ordre 5 in Section
\ref{subsubsec:methodesordre51D}) and in 2D on a star-shaped domain
(see Section \ref{subsec:numNLS2D}).
We consider NLH equations in 1D (methods of order 2 in Section \ref{subsec:numNLH1D}).
In all these cases, we illustrate numerically the main result of the paper
(Theorem \ref{th:convergence}) and we compare the precision and efficiency of the
high order linearly implicit methods with that of other methods from the litterature.
We also demonstrate numerically in Section \ref{sec:num} the necessity of our stability
hypotheses in the PDE setting.
Section \ref{sec:conclusion} conludes this paper and introduces future works.
An appendix is proposed in Section \ref{sec:appendix}, which includes proofs of several
technical results (Sections \ref{subsec:proofASIstable} and \ref{subsec:techlemmas})
as well as Butcher's tableaux of the underlying Runge--Kutta methods used in this paper
for the numerical experiments of Section \ref{sec:num} to allow for reproducibility of the numerical
results.


\section{Linearly implicit methods for semilinear evolution PDEs}
\label{sec:descriptionmethodes}

In this section, we introduce the linearly implicit methods from \cite{DL2020} adapted
to the PDE setting of the equation \eqref{eq:general} as well as the associated functional
framework.

\subsection{Description of the linearly implicit methods}

Linearly implicit method from \cite{DL2020} applied to a semilinear PDE of the form
\eqref{eq:general} rely on a classical Runge--Kutta method,
referred to as the ``underlying Runger--Kutta
method'' in the following, as well as on additional variables that are explicitly updated at every
time step.
Before adapting the definition of the linearly implicit methods from \cite{DL2020} to the PDE
context at hand in \ref{subsec:linimplmeth}, we recall some basic notations for classical
Runge--Kutta methods applied to PDE problems and introduce
their (linear) stability function in Section \ref{subsec:underlyingRK}.

\subsubsection{The underlying Runge--Kutta method}
\label{subsec:underlyingRK}
We fix a number of stages $s\in\N^\star$ and we consider throughout this paper
a Runge-Kutta method with $s\in\N^\star$ stages, and real coefficients
$(a_{ij})_{1\leq i,j\leq s}$, $(b_i)_{1\leq i\leq s}$, $(c_i)_{1\leq i\leq s}$.
We group these coefficients in a square matrix $A$ of size $s$ with coefficients
$(a_{i,j})_{1\leq i,j\leq s}$, a column vector $b$ of size $s$
with coefficients $(b_i)_{1\leq i\leq s}$,
and a column vector $c$ of size $s$ with coefficients $(c_i)_{1\leq i\leq s}$.
We moreover denote by $\mathds{1}$ the column vector of size $s$ with all components equal to $1$.
We recall that the (linear) stability function of the method is defined for $\lambda\in\C$ by
\begin{equation}
\label{eq:defR}
  R(\lambda) = 1 + \lambda b^t (I-\lambda A)^{-1} \mathds{1},
\end{equation}
where $I$ is the identity matrix of size $s$.
Note that the function $R$ is a rational function of $\lambda$.
We denote by $A\otimes L$ the operator-valued matrix
\begin{equation*}
  A\otimes L = 
  \begin{pmatrix}
  a_{11} L  & \cdots & a_{1s} L\\
  \vdots & & \vdots \\
  a_{s1} L & \cdots & a_{ss} L
  \end{pmatrix}.
\end{equation*}
Similarly, we set
\begin{equation*}
  b^t\otimes L = (b_1 L , \cdots, b_s L).
\end{equation*}
The notation $I$ will be used for the identity operator in general.

\subsubsection{Linearly implicit methods}
\label{subsec:linimplmeth}
Following the methods introduced in an ODE context in \cite{DL2020},
we assume we are given an $s\times s$ real or complex matrix $D$ and a real or complex column vector
$\Theta=(\theta_1,\cdots,\theta_s)^t$, and
we consider some $h>0$ as a time step. Since the problem \eqref{eq:general} is autonomous,
we always consider $t=0$ as the starting time without loss of generality.
We take $u^0\in H^\sigma$ (classical Sobolev space of order $\sigma$, to be defined in the
next section), and we denote by $t\mapsto u(t)$ the exact solution
of \eqref{eq:general} with initial datum $u(0)=u^0$. Of course, for fixed $t\geq 0$, the function
$u(t)$ is itself a function of ${\bf x}\in\Omega$. We will often omit this dependency in the notations.
We initialize the linearly implicit method with $u_0\in H^\sigma$, close to $u^0$.
We initialize the additional variables $(\gamma_{-1,i})_{1\leq i\leq s}$ with approximations
of $(N(u((c_i-1)h)))_{1\leq i\leq s}$ and we set
$\Gamma_{-1}=(\gamma_{-1,i})^t_{1\leq i\leq s} \in (H^\sigma)^s$.
Assuming one knows an approximation $u_n$ of the exact solution $u$ at time $t_n=nh$
and approximations $\Gamma_{n-1}=(\gamma_{n-1,i})_{1\leq i\leq s}$
of $(N(u(t_{n-1}+c_i h)))_{1\leq i\leq n}$ for some integer $n\geq 0$,
the first step of the linearly implicit method consists in computing pointwise in ${\bf x}$
\begin{equation}
  \label{eq:heritagegamma}
  \Gamma_n = D \Gamma_{n-1} + \Theta N(u_n).
\end{equation}
Let us recall a definition from \cite{DL2020}.
\begin{definition}
The step \eqref{eq:heritagegamma} is said to be strongly stable
when the spectral radius $\rho(D)$ of the matrix $D$ is stricly smaller than $1$.
It is said to be consistent of order $s$ when it satisfies
\begin{equation}
  \label{eq:heritagegammaorders}
  V_c = D V_{c-\mathds{1}} + \vartheta,
\end{equation}
where $V_c$ is the Vandermonde matrix at points $(c_1,\dots,c_s)$
({\it i.e.} $(V_c)_{i,j}=c_i^{j-1}$ for all $i,j$),
$V_{c-\mathds{1}}$ is the Vandermonde matrix at points $(c_1-1,\dots,c_s-1)$
({\it i.e.} $(V_{c-\mathds{1}})_{i,j}=(c_i-1)^{j-1}$ for all $i,j$), and $\vartheta$
is an $s\times s$ matrix with $\Theta$ as first column and zero everywhere else.
\end{definition}
Existence of matrices $D$ and vectors $\Theta$,
such that the step \eqref{eq:heritagegamma} is strongly stable and of order $s$
is discussed in \cite{DL2020}, and formulas are provided as well.
In this paper, we will always assume that the step \eqref{eq:heritagegamma} is strongly stable
and is consistent of order $s$.
The second step of the linearly implicit method uses the Runge--Kutta method
introduced in \ref{subsec:underlyingRK} in the following way.
One first solves for $(u_{n,i})_{1\leq i\leq s}$ the following $s\times s$ linear system
\begin{equation}
  \label{eq:systlin}
  u_{n,i} = u_n + h\sum_{j=1}^s a_{ij}(L+\gamma_{n,j}) u_{n,j}
  \qquad 1\leq i \leq s.
\end{equation}
Setting $U_n\in (H^\sigma)^s$ as the unknown vector with components $(u_{n,i})_{1\leq i\leq s}$,
the system \eqref{eq:systlin} reads
\begin{equation}
  \label{eq:systlinvect}
  (I-hA\otimes L) U_n = u_n \mathds{1} + h A (\Gamma_n\bullet U_n),
\end{equation}
where $\Gamma_n\bullet U_n$ denotes the column vector with components
$(\gamma_{n,i} u_{n,i})_{1\leq i\leq s}$.
Afterwards, the third and last step of the linearly implicit method consists in computing explicitly
\begin{equation}
  \label{eq:RKexpl}
  u_{n+1} = u_n + h \sum_{i=1}^s b_i (L+\gamma_{n,i})u_{n,i}.
\end{equation}
This last step can be also written
\begin{equation}
  \label{eq:RKexplvect}
  u_{n+1} = u_n + h \left(b^t\otimes L\right) U_n +h b^t (\Gamma_n\bullet U_n).
\end{equation}

\begin{definition}[collocation methods]
  \label{def:colmeth}
  The Runge--Kutta method defined by $A$, $b$ and $c$ is said to be of collocation
  when $c_1,c_2,\dots,c_s$ are distinct real numbers
  and $A$ and $b$ satisfy for all $(i,j)\in\{1,\dots,s\}^2$,
  \begin{equation}
    \label{eq:defaijbi}
    a_{ij} = \int_0^{c_i} \ell_j(\tau) \dd \tau
    \qquad\text{and}\qquad
    b_i = \int_0^1 \ell_i(\tau)\dd \tau,
  \end{equation}
  where $(\ell_i)_{1\leq i\leq s}$ are the $s$ Lagrange polynomials associated to $(c_1,\dots,c_s)$
  defined by
  \begin{equation}
    \label{eq:defli}
    \forall i\in\{1,\dots,s\},\qquad
    \ell_i(\tau) = \prod_{\substack{j\neq i\\ j=1} }^s \frac{(\tau-c_j)}{(c_i-c_j)}. 
  \end{equation}
\end{definition}

\begin{remark}
  In the examples of this paper, we shall order the coefficients $c_1,\dots,c_s$ in such a way
  that $c_1<\cdots<c_s$ and assume that they all lie in $[0,1]$.
\end{remark}

\subsection{Functional framework}
\label{subsec:functionalframework}
In this paper, for expository purposes, we consider the case $\Omega=\R^d$ in particular in
the proofs of our results.
In this case, we denote by $H^\sigma$ the space
of functions $f$ from $\R^d$ to $\C$ such that
\begin{equation*}
  \int_{\R^d} |\hat f(\xi)|^2 (1+|\xi|^2)^\sigma \dd \xi <+\infty,
\end{equation*}
equipped with the norm
$\|f\|_{H^\sigma}=\left(\int_{\R^d} |\hat f(\xi)|^2 (1+|\xi|^2)^\sigma \dd \xi\right)^\frac12$,
where $\hat f$ stands for the Fourier transform of the function $f$.
The corresponding norm on $(H^\sigma)^s$ is defined for $F=(f_1,\dots,f_s)^t \in (H^\sigma)^s$ by
\begin{equation}
  \label{eq:norme1}
  \|F\|_{(H^\sigma)^s} = \left(\sum_{p=1}^s
  \int_{\R^d} |\hat f_p(\xi)|^2 (1+|\xi|^2)^\sigma \dd \xi\right)^\frac12.
\end{equation}

Of course, our results extend to several other cases. Let us mention two of them :
the $d$-dimensional torus $\Omega=\T^d$ and the case of a bounded open set $\Omega$ with
a Lipschitz boundary and homogeneous Dirichlet boundary conditions.
First, in the case of the torus $\T^d$, we denote by $H^\sigma$ the space
of functions $f$ from $\T^d$ to $\C$ such that
\begin{equation*}
  \sum_{k\in\Z^d} |\hat f(k)|^2 (1+|k|^2)^\sigma  <+\infty,
\end{equation*}
equipped with the norm
$\|f\|_{H^\sigma}=\left(\sum_{k\in\Z^d} |\hat f(k)|^2 (1+|k|^2)^\sigma  \right)^\frac12$,
where $\hat f$ stands for the sequence of Fourier coefficients of $f$.
The corresponding norm on $(H^\sigma)^s$ is defined for $F=(f_1,\dots,f_s)^t \in (H^\sigma)^s$ by
\begin{equation}
  \label{eq:norme2}
  \|F\|_{(H^\sigma)^s} = \left(\sum_{p=1}^s\sum_{k\in\Z^d} |\hat f_p(k)|^2 (1+|k|^2)^\sigma  \right)^\frac12.
\end{equation}
Second, in the case of a bounded open set $\Omega$ with a Lipschitz boundary, the Laplace
operator with homogeneous Dirichlet boundary conditions has compact inverse. Hence its
spectrum consists in a countable set of negative eigenvalues $(-\lambda_k^2)_{k\in\N}$
with $\lambda_k>0$ and $\lambda_k\underset{k\to +\infty}{\longrightarrow} +\infty$,
and there exists an orthogonal Hilbert basis $(e_k)_{k\in\N}$ of $L^2(\Omega)$
consisting in eigenvectors of this Laplace operator. Indeed, the integration by parts
formula holds true in this case (see Theorem 2.4.1 in \cite{Ledret}) and this allows to perform
the usual variational analysis (see Theorem 7.3.2 in \cite{Allaire}).
We denote by $H^\sigma$ the space of functions $f\in L^2(\Omega,\C)$ such that
\begin{equation*}
  \sum_{k\in\N} |\alpha_k|^2 (1+\lambda_k^2)^\sigma  <+\infty,
\end{equation*}
where $f=\sum_{k=0}^{+\infty} \alpha_k e_k$. Of course, the associated norm on $H^\sigma$
is defined by
\begin{equation*}
  \|f\|_{H^\sigma}=\left(\sum_{k\in\N} |\alpha_k|^2 (1+\lambda_k^2)^\sigma  \right)^\frac12,
\end{equation*}
and that on $(H^\sigma)^s$ is defined for $F=(f_1,\dots,f_s)^t \in (H^\sigma)^s$ by
\begin{equation}
  \label{eq:norme3}
  \|F\|_{(H^\sigma)^s} = \left(\sum_{p=1}^s\sum_{k\in\N} |\alpha^p_k|^2 (1+\lambda_k^2)^\sigma  \right)^\frac12,
\end{equation}
where for all $p\in\{1,\dots,s\}$, $f_p=\sum_{k=0}^{+\infty} \alpha^p_k e_k$.

In all cases, we chose $\sigma>d/2$ so that $H^\sigma$ is an algebra.
This is well known for $\Omega=\R^d$ and $\Omega=\T^d$, and holds for
$\Omega$ bounded open set of $\R^d$ with Lipschitz boundary (see for example after
Theorem 1.4.4.2 in \cite{Grisvard}).
Of course, one can even think of more general settings allowing
for the algebra property on $H^\sigma$ (see for example \cite{Fred12} and references therein),
but we will not do so in this paper.

The norm of linear continuous operators from $H^\sigma$ to itself is denoted by
$\|\cdot\|_{H^\sigma\to H^\sigma}$, and that of $(H^\sigma)^s$ to itself is denoted by
$\|\cdot\|_{(H^\sigma)^s\to (H^\sigma)^s}$.

\begin{lemma}
  \label{lem:normeD}
  As above, assume $D$ is a complex $s\times s$ matrix. The linear mapping
  $F\mapsto D F$ sends $(H^\sigma)^s$ to itself continuously.
  Moreover, if the step \eqref{eq:heritagegamma} is strongly stable ({\it i.e.} $\rho(D)<1$),
  then for all $\delta\in (\rho(D),1)$, there exists
  a norm $\|\cdot\|_{(H^\sigma)^s,D}$ on $(H^\sigma)^s$,
  equivalent to the usual norm defined above (in \ref{eq:norme1}, \ref{eq:norme2} or \ref{eq:norme3}),
  such that
  \begin{equation}
    \label{eq:inegnormeD}
    \forall F\in (H^\sigma)^s,\qquad
    \|DF\|_{(H^\sigma)^s,D} \leq \delta \|F\|_{(H^\sigma)^s,D}.
  \end{equation}
\end{lemma}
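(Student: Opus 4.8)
The plan is to reduce the statement about operators on $(H^\sigma)^s$ to a purely finite-dimensional linear algebra fact about the matrix $D$ acting on $\C^s$, which is exactly the content used in the ODE setting of \cite{DL2020}. The key observation is that in each of the three functional frameworks the norm on $(H^\sigma)^s$ defined by \eqref{eq:norme1}, \eqref{eq:norme2} or \eqref{eq:norme3} is obtained by ``integrating'' (or summing over the spectral parameter) the squared Euclidean norm on $\C^s$ against a positive weight: writing $\widehat F(\xi)=(\widehat f_1(\xi),\dots,\widehat f_s(\xi))^t\in\C^s$ in the case $\Omega=\R^d$, one has
\begin{equation*}
  \|F\|_{(H^\sigma)^s}^2 = \int_{\R^d} |\widehat F(\xi)|_{\C^s}^2 \,(1+|\xi|^2)^\sigma\,\dd\xi,
\end{equation*}
and the action $F\mapsto DF$ corresponds fibrewise to the action $\widehat F(\xi)\mapsto D\widehat F(\xi)$ of the constant matrix $D$ on $\C^s$, with the same identification in the torus and bounded-domain cases (sums over $k\in\Z^d$ or $k\in\N$). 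Continuity of $F\mapsto DF$ is then immediate from $|D v|_{\C^s}\le \|D\|_{2}|v|_{\C^s}$ applied under the integral/sum.

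For the contraction estimate, I would first recall the standard finite-dimensional lemma: since $\rho(D)<\delta$, there exists an invertible matrix $P\in GL_s(\C)$ (for instance obtained from a Jordan decomposition of $D$ after rescaling the Jordan blocks, or from Householder's theorem on operator norms approximating the spectral radius) such that the vector norm $|v|_{D}:=|P^{-1}v|_{\C^s}$ on $\C^s$ satisfies $|Dv|_{D}\le \delta |v|_{D}$ for all $v\in\C^s$. Then I define, in the $\Omega=\R^d$ case,
\begin{equation*}
  \|F\|_{(H^\sigma)^s,D} := \left(\int_{\R^d} |P^{-1}\widehat F(\xi)|_{\C^s}^2\,(1+|\xi|^2)^\sigma\,\dd\xi\right)^{1/2},
\end{equation*}
and analogously with sums in the other two settings. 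One checks this is a norm (it is the pullback of the usual norm by the bounded invertible operator $F\mapsto (P^{-1}\otimes I)F$, i.e. applying $P^{-1}$ fibrewise), that it is equivalent to the usual norm with equivalence constants depending only on $\|P\|_2$ and $\|P^{-1}\|_2$ (again by applying the matrix inequalities $\|P^{-1}\|_2^{-1}|v|_{\C^s}\le |P^{-1}v|_{\C^s}\le \|P^{-1}\|_2|v|_{\C^s}$ under the integral, uniformly in $\xi$), and finally that
\begin{equation*}
  \|DF\|_{(H^\sigma)^s,D}^2 = \int_{\R^d} |P^{-1}D\widehat F(\xi)|_{\C^s}^2\,(1+|\xi|^2)^\sigma\,\dd\xi
  \le \delta^2 \int_{\R^d} |P^{-1}\widehat F(\xi)|_{\C^s}^2\,(1+|\xi|^2)^\sigma\,\dd\xi = \delta^2\|F\|_{(H^\sigma)^s,D}^2,
\end{equation*}
using the fibrewise inequality $|P^{-1}Dv|_{\C^s} = |(P^{-1}DP)(P^{-1}v)|_{\C^s}\le\delta|P^{-1}v|_{\C^s}$ which is precisely the contraction property of $|\cdot|_D$ on $\C^s$.

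I do not expect a genuine obstacle here; the result is a routine ``tensorization'' of a finite-dimensional fact, and the only mild care needed is bookkeeping: verifying that the three different spectral representations (Fourier integral, Fourier series, eigenfunction expansion of the Dirichlet Laplacian) all yield the same fibrewise picture, so that a single argument covers all cases, and checking that the equivalence constants are indeed independent of the spectral variable (which they are, since $P$ is a fixed matrix not depending on $\xi$ or $k$). One should also note that $\delta$ can be taken arbitrarily close to $\rho(D)$ by choosing $P$ accordingly, which is what the quantifier ``for all $\delta\in(\rho(D),1)$'' requires. I would state the underlying $\C^s$ lemma explicitly (or cite \cite{DL2020} / a standard reference for Householder's theorem) and then present the integration step once, remarking that the torus and bounded-domain cases are identical after replacing the integral by the appropriate sum.
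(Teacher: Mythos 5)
Your proposal is correct and follows essentially the same route as the paper: the paper also reduces to the fibrewise action of $D$ on $\C^s$ via the identity $\widehat{DF}(\xi)=D\hat F(\xi)$, invokes the finite-dimensional adapted norm $|\cdot|_D$ (constructed in its appendix by Jordan reduction and rescaling, exactly as you suggest), and defines $\|F\|_{(H^\sigma)^s,D}$ by integrating $|\hat F(\xi)|_D^2$ against the Sobolev weight. The only difference is cosmetic (the paper writes $|x|_D=|Px|_2$ where you write $|P^{-1}x|_2$).
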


\begin{proof}
  We prove the result in the case of the full space $\R^d$. The proof in the other cases
  follows the same lines. First, one can check that
  \begin{equation}
    \label{eq:Dindependantdex}
    \forall F\in (H^\sigma)^s,\quad \forall \xi \in\R^d,\qquad \widehat{DF}(\xi) = D \hat F(\xi).
  \end{equation}
  Therefore,
  \begin{equation*}
    \forall F\in (H^\sigma)^s,\qquad \|DF\|_{(H^\sigma)^s} \leq C \|F\|_{(H^\sigma)^s},
  \end{equation*}
  where $C$ is any constant greater or equal to the norm of the matrix $D$ as a continuous linear
  operator from the hermitian space $\C^s$ to itself.
  If we assume moreover that $\rho(D)<1$ and $\delta\in(\rho(D),1)$ is given, then one
  can chose $\varepsilon>0$ sufficiently small to ensure that $\rho(D)+\varepsilon\leq \delta$.
  Denoting by $|\cdot|_D$ the norm on $\C^s$ provided by Lemma \ref{lem:normeinduite}, we can
  define a norm on $(H^\sigma)^s$ by setting
  \begin{equation*}
    \forall F\in \Hss,\qquad \|F\|_{\Hss,D}
    = \left(\int_{\R^d}|\hat F(\xi)|_D^2(1+|\xi|^2)^\sigma\dd\xi\right)^\frac12.
  \end{equation*}
  Since $|\cdot|_D$ is equivalent to the usual hermitian norm on $\C^s$, the norm
  $\|\cdot\|_{\Hss,D}$ is equivalent to the norm $\|\cdot\|_{\Hss}$.
  Moreover, thanks to \eqref{eq:Dindependantdex} and Lemma \ref{lem:normeinduite}, one has
  \begin{eqnarray*}
    \forall F\in \Hss,\qquad \|DF\|_{\Hss,D}^2
    & = & \int_{\R^d}|D \hat{F}(\xi)|_D^2(1+|\xi|^2)^\sigma\dd\xi\\
    & \leq & \int_{\R^d}(\rho(D)+\varepsilon)^2|\hat F(\xi)|_D^2(1+|\xi|^2)^\sigma\dd\xi\\
    & \leq & \delta^2 \|F\|_{\Hss,D}^2.
  \end{eqnarray*}
  This proves \eqref{eq:inegnormeD}.
\end{proof}

\begin{remark}
Let us define $\|\cdot\|_{\Hss,\infty}$ the norm on $\Hss$ defined as the maximum of the $\Hs$-norm of the components of the vectors. One can check easily that
$$\forall F \in \Hss, \qquad  \|F\|_{\Hss,\infty} \leq \|F\|_{\Hss} \leq \sqrt{s} \|F\|_{\Hss,\infty}.$$
In particular, this norm is also equivalent to the norm $\|\cdot\|_{\Hss,D}$.
\end{remark}


\section{Convergence
  for linearly implicit methods for NLS and NLH}
\label{sec:CVlinimpl}

In this section, we first introduce the consistency and convergence errors
of the linearly implicit method \eqref{eq:heritagegamma}-\eqref{eq:systlinvect}-\eqref{eq:RKexplvect}
in Section \ref{subsec:consistencyandconvergenceerrors}.
Then, we focus on the anlysis of the quasi-Runge--Kutta step \eqref{eq:systlinvect} in Section
\ref{subsec:analysisofthequasiRKstep}.
In Section \ref{subsec:hypoandresult}, we set the precise hypotheses on the exact solution of
\eqref{eq:general} as well as on the linearly implicit method
and we state the main result of the paper, which ensures that linearly implicit methods
have high order.
The proof of this main result is provided in Section \ref{subsec:proof}.

In this section, we assume $\Omega=\R^d$ or $\Omega=\T^d$ even if the results extend to more complex geometries (see Section \ref{subsec:functionalframework}). 

\subsection{Consistency and convergence errors}
\label{subsec:consistencyandconvergenceerrors}
\subsubsection{Consistency errors}
\label{ssec:consistencyerrors}
We assume that the equation \eqref{eq:general} with initial datum $u^0\in \Hs$ at $t=0$
has a unique smooth solution at least on some interval of the form
$(T_\star,T^\star)$ with $T_\star<0<T^\star$. We estimate consistency errors.
Since consistency errors rely on Taylor expansions of the exact solution with
respect to time, the analysis is very similar to that of \cite{DL2020}.
For all integer $n$ greater or equal to $-1$,
we denote by $t_n$ the time $nh$, where $h>0$ is the time step that we assume
sufficiently small to ensure that $T_\star<t_{-1}$.
\begin{definition}
  For all $n\geq 0$ and all $h>0$ small enough such that $t_{n+1}<T^\star$, we define
  the consistency error $R_{n}^{1}$ in $\Hss$ of the step \eqref{eq:heritagegamma}
  by setting
   \begin{equation}
   \label{eq:Rn1}
   R_n^1=\left [
       \begin{matrix}
         N(u(t_n+c_1 h)) \\
         \vdots\\
         N(u(t_n+c_s h))
       \end{matrix}
     \right ]
   -
   D
   \left [
   \begin{matrix}
         N(u(t_{n-1}+c_1 h)) \\
         \vdots\\
         N(u(t_{n-1}+c_s h))
       \end{matrix}
     \right ]
   -
   N(u(t_n))
   \left [
       \begin{matrix}
       \theta_1\\
       \vdots\\
       \theta_s
       \end{matrix}
     \right ].
 \end{equation}
 Similarly, we define the consistency error $R_n^2$ of step \eqref{eq:systlin}
 as the vector of $\Hss$ with entry number $i$ equal to
 \begin{equation}
   \label{eq:Rn2}
   \left(R_n^2\right)_i=u(t_n+c_ih)-u(t_n)-h\sum_{j=1}^s a_{ij} \left(L+N(u(t_n+c_j h))\right)u(t_n+c_j h).
 \end{equation}
 Moreover, we define the consistency error $R_n^3$ in $\Hs$ of step \eqref{eq:RKexpl}
 by setting
  \begin{equation}
   \label{eq:Rn3}
  R_n^3=u(t_{n+1})-u(t_n)-h\sum_{i=1}^s b_i \left(L+N(u(t_n+c_i h))\right)u(t_n+c_i h).
 \end{equation}
\end{definition}

We fix some final time $T\in (0,T^\star)$.

\begin{proposition}[consistency error of step \eqref{eq:heritagegamma}]
  Assume that the exact solution and the nonlinear term are such that
  $t\longmapsto N\circ u (t)$ has $s$ continuous derivatives in $\Hs$ on $[0,T]$.
  Assume moreover that the step \eqref{eq:heritagegamma} is consistent of order $s$.
  There exists a constant $C>0$ such that for all $n\in\N$ and $h>0$
  sufficiently small, such that $t_{n+1}\leq T$, one has
  \begin{equation}
    \label{eq:estimRn1}
    \| R_n^1 \|_{\Hss} \leq C h^s.
  \end{equation}
\end{proposition}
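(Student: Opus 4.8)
The strategy is a direct Taylor expansion of the exact solution composed with the nonlinearity, exploiting the order-$s$ consistency hypothesis \eqref{eq:heritagegammaorders} on the step \eqref{eq:heritagegamma}. Write $g(t) = N\circ u(t)$, which by hypothesis has $s$ continuous derivatives in $\Hs$ on $[0,T]$. For each index $i$, Taylor-expand $g(t_n + c_i h)$ and $g(t_{n-1} + c_i h) = g(t_n + (c_i-1)h)$ around $t_n$ with integral (or Lagrange) remainder of order $s$: one gets
\[
 g(t_n + c_i h) = \sum_{k=0}^{s-1} \frac{(c_i h)^k}{k!} g^{(k)}(t_n) + \rho_{n,i},
 \qquad
 g(t_n + (c_i-1)h) = \sum_{k=0}^{s-1} \frac{((c_i-1) h)^k}{k!} g^{(k)}(t_n) + \tilde\rho_{n,i},
\]
where the remainders $\rho_{n,i},\tilde\rho_{n,i}$ are bounded in $\Hs$ by $C h^s \sup_{[0,T]}\|g^{(s)}\|_{\Hs}$, uniformly in $n$ with $t_{n+1}\le T$ (here one uses that all the points $t_n+c_ih$ and $t_{n-1}+c_ih$ lie in $[t_{-1},T]\subset(T_\star,T^\star)$ for $h$ small, and that $c_i\in[0,1]$ so the increments are $O(h)$).

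**Main computation.** Substitute these expansions into the definition \eqref{eq:Rn1} of $R_n^1$. The polynomial (non-remainder) part, collected componentwise, is precisely
\[
 \sum_{k=0}^{s-1} \frac{h^k}{k!} g^{(k)}(t_n)\,\Bigl( V_c - D\, V_{c-\mathds{1}} - \vartheta\Bigr)_{\cdot, k+1},
\]
since the $k$-th column of $V_c$ has entries $c_i^{k}$ (shifting indices by one to match $(V_c)_{i,j}=c_i^{j-1}$), the $k$-th column of $V_{c-\mathds{1}}$ has entries $(c_i-1)^{k}$, and the contribution of $\Theta N(u(t_n))$ matches the first column of $\vartheta$ (note $g(t_n)=N(u(t_n))$ is the $k=0$ term). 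By the order-$s$ consistency relation \eqref{eq:heritagegammaorders}, $V_c - D V_{c-\mathds{1}} - \vartheta = 0$, so every one of these polynomial terms vanishes identically. What remains is exactly $R_n^1 = (\rho_{n,i})_i - D(\tilde\rho_{n,i})_i$, a combination of the Taylor remainders.

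**Conclusion and the one technical point.** Finally, estimate in the $\Hss$ norm: using that $D$ is a bounded operator on $(\Hs)^s$ (the first part of Lemma \ref{lem:normeD}) and the remainder bounds above,
\[
 \|R_n^1\|_{\Hss} \le \|(\rho_{n,i})_i\|_{\Hss} + \|D\|_{(\Hs)^s\to(\Hs)^s}\,\|(\tilde\rho_{n,i})_i\|_{\Hss} \le C h^s,
\]
with $C$ depending only on $s$, the $c_i$, $\|D\|$, and $\sup_{[0,T]}\|g^{(s)}\|_{\Hs}$, but not on $n$ or $h$. The only mildly delicate point — and the one I would be careful to state explicitly — is the uniformity of the Taylor remainder bound: it requires that $g^{(s)}$ be continuous, hence bounded, on the \emph{closed} interval $[t_{-1},T]$ (which is why the hypothesis is phrased on $[0,T]$ and $h$ is taken small enough that $t_{-1}>T_\star$), so that the mean-value form of the remainder can be controlled by a single constant. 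Everything else is bookkeeping with the Vandermonde columns; the algebraic cancellation is the heart of the argument and it is immediate from \eqref{eq:heritagegammaorders}.
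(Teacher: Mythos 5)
Your proof is correct and follows essentially the same route as the paper, which simply defers to Lemma~1 of \cite{DL2020}: a componentwise Taylor expansion of $g=N\circ u$ about $t_n$ in $\Hs$, cancellation of all polynomial terms via the order-$s$ consistency relation \eqref{eq:heritagegammaorders} read column by column against the Vandermonde matrices, and a uniform bound on the remainders using boundedness of $g^{(s)}$ and of $D$ on $\Hss$. Your explicit remark that the points $t_{n-1}+c_ih$ may fall slightly below $0$ for $n=0$, so that one really needs smoothness on $[t_{-1},T]\subset(T_\star,T^\star)$ rather than merely on $[0,T]$, is a welcome precision that the paper glosses over.
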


\begin{proof}
  The proof follows the very same lines as that of Lemma 1 in \cite{DL2020}.
\end{proof}

\begin{proposition}[consistency error of steps
  \eqref{eq:systlinvect}-\eqref{eq:RKexplvect}]
  Assume that the Runge-Kutta method defined in \ref{subsec:underlyingRK}
  is a collocation method of order $s$.
  Assume that the exact solution and the nonlinear term are such that
  $t\longmapsto u (t)$ has $s+2$ continuous derivatives in $\Hs$ on $[0,T]$.
  There exists a constant $C>0$ such that for all $n\in\N$ and $h>0$
  sufficiently small, such that $t_{n+1}\leq T$, one has
  \begin{equation}
    \label{eq:estimRn23}
    \| R_n^2 \|_{\Hss} \leq C h^{s+1} \qquad \text{and}\qquad
    \| R_n^3 \|_{\Hs} \leq C h^{s+1}.
  \end{equation}
\end{proposition}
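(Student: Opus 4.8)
The plan is to reduce the statement to a purely quadrature-type estimate, exactly as in the ODE analysis of \cite{DL2020}. The key observation is that the exact solution solves \eqref{eq:general}, so that for every $t\in[0,T]$ one has $\bigl(L+N(u(t))\bigr)u(t)=\pa_t u(t)=:u'(t)$, an element of $\Hs$ by the regularity hypothesis; in particular we never have to estimate $Lu(t)$ in $\Hs$ on its own. Substituting this identity into \eqref{eq:Rn2} and \eqref{eq:Rn3}, the consistency errors become
\begin{equation*}
  (R_n^2)_i = u(t_n+c_ih)-u(t_n)-h\sum_{j=1}^s a_{ij}\,u'(t_n+c_jh),
  \qquad
  R_n^3 = u(t_{n+1})-u(t_n)-h\sum_{i=1}^s b_i\,u'(t_n+c_ih),
\end{equation*}
in which neither $L$ nor $N$ appears explicitly: these are precisely the stage and step residuals met in the classical consistency analysis of collocation Runge--Kutta methods, now for an $\Hs$-valued trajectory.

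Next I would record the two simplifying assumptions of a collocation method. Since $a_{ij}=\int_0^{c_i}\ell_j(\tau)\dd\tau$ and $b_i=\int_0^1\ell_i(\tau)\dd\tau$ (see \eqref{eq:defaijbi}) and Lagrange interpolation at the distinct nodes $c_1,\dots,c_s$ reproduces every polynomial of degree at most $s-1$, the associated interpolatory quadratures are exact on such polynomials; evaluating them at $\tau\mapsto\tau^{k-1}$ yields, for all $1\le k\le s$,
\begin{equation*}
  \sum_{j=1}^s a_{ij}\,c_j^{k-1}=\frac{c_i^{k}}{k}\quad(1\le i\le s),
  \qquad\qquad
  \sum_{i=1}^s b_i\,c_i^{k-1}=\frac1k .
\end{equation*}
Then I would Taylor-expand $u$ and $u'$ about $t_n$ using the integral form of the remainder (no mean value form being available in the Banach space $\Hs$): for $c\in[0,1]$,
\begin{equation*}
  u(t_n+ch)=\sum_{k=0}^{s+1}\frac{(ch)^k}{k!}\,u^{(k)}(t_n)+\rho(c),
  \qquad
  u'(t_n+ch)=\sum_{k=0}^{s}\frac{(ch)^k}{k!}\,u^{(k+1)}(t_n)+\rho'(c),
\end{equation*}
with $\|\rho(c)\|_{\Hs}\le C h^{s+2}$ and $\|\rho'(c)\|_{\Hs}\le C h^{s+1}$, where $C$ depends only on $s$ and on $\sup_{[0,T]}\|u^{(s+2)}\|_{\Hs}$; this is where the hypothesis that $t\mapsto u(t)$ has $s+2$ continuous derivatives in $\Hs$ is used. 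Inserting these expansions and using the two relations above, all terms of order $h^1,\dots,h^s$ cancel identically: for $(R_n^2)_i$, the $h^m$ term $\tfrac{c_i^{m}}{m!}u^{(m)}(t_n)$ produced by $u(t_n+c_ih)$ is cancelled by $\tfrac{h^m}{(m-1)!}\bigl(\sum_j a_{ij}c_j^{m-1}\bigr)u^{(m)}(t_n)$ via the first relation, and likewise for $R_n^3$ via the $b_i$-relation. What remains is bounded in $\Hs$-norm by $C h^{s+1}$ (namely the surviving order-$h^{s+1}$ contribution together with $\rho(c_i)$ and the $O(h^{s+2})$ term coming from $h\sum_j a_{ij}\rho'(c_j)$), with $C$ independent of $n$ since these are one-step residuals. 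Taking $\Hs$-norms and passing to the $\Hss$-norm through its equivalence with the max-of-components norm recalled after Lemma \ref{lem:normeD} gives \eqref{eq:estimRn23}.

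The computation is elementary, and I do not expect a genuine obstacle beyond careful bookkeeping: one must (i) use the integral Taylor remainder because $\Hs$ is infinite-dimensional, (ii) track exactly which time-derivatives of $u$ enter, so as to invoke only the stated regularity, and (iii) keep in mind that, after the substitution $\bigl(L+N(u)\bigr)u=u'$, the unbounded operator $L$ plays no further role and never needs to be controlled on $\Hs$ --- organizing the cancellation of the $h^1,\dots,h^s$ terms cleanly is the only slightly tedious point, and it is done exactly as for the corresponding lemma in \cite{DL2020}. A shorter variant avoids Taylor expansions altogether: rescaling to the reference interval via $\phi(\tau):=u(t_n+\tau h)$ one obtains $(R_n^2)_i=\int_0^{c_i}\bigl(\phi'(\tau)-(\mathcal Q\phi')(\tau)\bigr)\dd\tau$ and $R_n^3=\int_0^{1}\bigl(\phi'(\tau)-(\mathcal Q\phi')(\tau)\bigr)\dd\tau$, where $\mathcal Q$ denotes Lagrange interpolation at $c_1,\dots,c_s$; bounding the interpolation error of $\phi'$ by a Peano-kernel estimate applied componentwise in Fourier, $\|\phi'(\tau)-(\mathcal Q\phi')(\tau)\|_{\Hs}\le C\sup_{[0,1]}\|\phi^{(s+1)}\|_{\Hs}=C h^{s+1}\sup_{[t_n,t_{n+1}]}\|u^{(s+1)}\|_{\Hs}$, and integrating in $\tau$, leads to the same conclusion.
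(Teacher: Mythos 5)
Your proof is correct and follows essentially the same route as the paper: both reduce $(R_n^2)_i$ and $R_n^3$ to the error of the order-$s$ collocation quadrature applied to $u'=(L+N(u))u\in\mathcal C^{s+1}([0,T],\Hs)$, and then invoke a standard quadrature-error bound --- the paper cites the Peano kernel (your second variant), while your Taylor-expansion-with-cancellation argument via $\sum_j a_{ij}c_j^{k-1}=c_i^k/k$ and $\sum_i b_ic_i^{k-1}=1/k$ is an equivalent, fully spelled-out version of the same estimate. All the details you supply (integral-form remainder in the Banach space $\Hs$, uniformity of the constant in $n$, passage to the $\Hss$-norm) are sound.
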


\begin{proof}
  For all $n\geq 0$ and $h>0$ such that $t_{n+1}\leq T$, the errors
  $((R_n^2)_i)_{1\leq i\leq s}$ and $(R_n^3)$ are that of the quadrature method (corresponding
  to the Runge--Kutta collocation method) applied to the function $u'$
  at points $(t_n+c_jh)_{1\leq j\leq s}$ with weights
  $(h a_{i,j})_{1\leq j\leq s}$ on $[t_n,t_n+c_i h]$ for $(R_n^2)_i$ for all $i\in\{1,\dots,s\}$ and
  with weights $(h b_i)_{1\leq i\leq s}$ on $[t_n,t_{n+1}]$ for $R^3_n$.
  Therefore, they can be expressed,
  for example, with the Peano Kernel. They depend on the method, and on the exact solution,
  but not on the numerical solution.
  So, since the function $u'\in\mathcal C^{s+1}([0,T],\Hs)$, they are controlled
  as in \eqref{eq:estimRn23}.
\end{proof}

\subsubsection{Convergence errors}
We introduce the notation for the convergence errors that we use in the proof of convergence.
Following the notation in the paper \cite{DL2020}, we denote for all $h>0$ small enough
to ensure that the method \eqref{eq:heritagegamma}-\eqref{eq:systlin}-\eqref{eq:RKexpl}
is well-defined, and all $n\geq 0$ such that $t_{n+1}\leq T$ the following convergence errors:
$P_n\in \Hss$ is the vector with component number $i$ equal
to\break $(P_n)_i=N(u(t_n+c_ih))-\gamma_{n,i}$,
$Q_n\in \Hss$ is the vector with component number $i$ equal to $(Q_n)_i=u(t_n+c_ih)-u_{n,i}$, and
$e_n\in \Hs$ is defined by $u(t_n)-u_n$. Moreover, we set $z_n=\max_{0\leq k\leq n}(\|e_k\|_\Hs)$.

\subsection{Analysis of the linear quasi-Runge--Kutta step \eqref{eq:systlin}}
\label{subsec:analysisofthequasiRKstep}

\subsubsection{Notions of stability for Runge--Kutta methods}
We denote by $\C^-$ the set of complex numbers with non positive real part,
and by $i\R$ the set of purely imaginary numbers.
Following \cite{Crouzeix80}, \cite{Hairer82}, \cite{BHV86} and \cite{DL2023RK},
we recall the following definitions.

\begin{definition}
\label{def:stab}  
  A Runge--Kutta method is said to be
\begin{itemize} 
  	\item $A$-stable if for all $\lambda\in \C^-$, $|R(\lambda)|\leq 1$.
	\item $I$-stable if for all $\lambda\in i\R$, $|R(\lambda)|\leq 1$.
        \item \emph{$AS$-stable} if the rational function
          $\lambda\mapsto \lambda b^t(I-\lambda A)^{-1}$ has only removable singularities in $\C^-$
          and is bounded on $\C^-$.
	\item $ASI$-stable if for all $\lambda\in\C^-$, the matrix $(I-\lambda A)$ is
  invertible and $\lambda\mapsto (I-\lambda A)^{-1}$ is uniformly bounded on $\C^-$.
        \item \emph{$IS$-stable} if the rational function
          $\lambda\mapsto \lambda b^t(I-\lambda A)^{-1}$ has only removable singularities in $i\R$
          and is bounded on $i\R$.
	\item $ISI$-stable if for all $\lambda\in i\R$, the matrix $(I-\lambda A)$ is
  invertible and $\lambda\mapsto (I-\lambda A)^{-1}$ is uniformly bounded on $i\R$.
	\item $\hat{A}$-stable if it is $A$-stable, $AS$-stable and $ASI$-stable.
	\item $\hat{I}$-stable if it is $I$-stable, $IS$-stable and $ISI$-stable.
\end{itemize}
\end{definition}

A few examples of Runge--Kutta methods and their stability properties
can be found in Section \ref{subsec:examples}. The analysis of the stability properties of these methods as well as other examples can be found in \cite{DL2023RK}.

\begin{remark}
  \label{rem:AdonneI}
  Observe that, from this definition, an $A$-stable Runge--Kutta collocation method is also $I$-stable.
  Similarly, an $AS$-stable Runge--Kutta collocation method is also $IS$-stable, and
  and $ASI$-stable Runge--Kutta collocation method is also $ISI$-stable.
  As a consequence, an $\hat A$-stable method is also $\hat I$-stable.
\end{remark}

\begin{remark}
  \label{rem:SIimpliqueS}
  If $b$ is in the range of $A^t$, then $ASI$-stability implies $AS$-stability using Lemma
  4.4 of \cite{BHV86}. Similarly, $ISI$-stability implies $IS$-stability. 
\end{remark}

\begin{remark}
  Further connections between these eight notions of stability for collocation Runge--Kutta methods are analysed in \cite{DL2023RK}.
\end{remark}

\begin{remark}
The well posedness of the {\bf linear} PDE \eqref{eq:general} (with $N \equiv 0$) relies on the fact that the spectrum of the linear operator $L$ is included in a half-plane of the form 
$\{z\in\C \ |\ \Re(z) \leq s_0\}$ for some $s_0$ in $\R$. Up to a change of unknown, we may consider the case $s_0=0$ without loss of generality. The first example $L=\Delta$ (see Section \ref{sec:intro}) fits this framework.
The analysis of our methods applied to this example will involove the notion of $\hat{A}$-stability even in the nonlinear setting. Similarly, the second example $L=i\Delta$ (see Section \ref{sec:intro}) also fits this framework.
The analysis of our methods applied to this example will involove the notion of $\hat{I}$-stability even in the nonlinear setting.
\end{remark}

\subsubsection{{\color{black}Stability of Runge--Kutta methods for semilinear PDE problems}}

In this paper, we consider semilinear evolution problems of the form \eqref{eq:general},
with $L=\Delta$ or $L=i\Delta$ as explained in Section \ref{subsec:functionalframework}. 
What follows can be adapted for any diagonalizable operator on $H^\sigma$, provided one knows
the localization of the spectrum. The goal of this section is to set notations on the operators
that will be used later in the paper and to establish estimates on these operators,
provided that the Runge--Kutta method has some stability property.
The proof of these estimates is presented in Appendix (Section \ref{subsec:proofASIstable}).

\begin{proposition}
\label{prop:ASstable}
  Assume the Runge--Kutta method is $AS$-stable and $L=\Delta$.
  There exists a constant $C>0$ such that for all $h>0$, we have
  \begin{equation*}
    \|\left(h b^t \otimes L\right) \left(I-hA\otimes L\right)^{-1}\|_{(H^\sigma)^s\to H^\sigma}
    \leq C.
  \end{equation*}
\end{proposition}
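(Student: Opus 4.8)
The plan is to diagonalize the Laplacian via the Fourier transform and reduce the operator estimate to a scalar bound involving the stability-type quantity $\lambda\mapsto\lambda b^t(I-\lambda A)^{-1}$ on the spectrum of $hL$. First I would use the Plancherel identity on $(H^\sigma)^s$: for $F\in(H^\sigma)^s$, applying the operator $(hb^t\otimes L)(I-hA\otimes L)^{-1}$ commutes with the Fourier transform in the sense that, at each frequency $\xi\in\R^d$ (or $k\in\Z^d$), it acts as the $\C$-valued row vector $h\mu\, b^t(I-h\mu A)^{-1}$ applied to $\hat F(\xi)\in\C^s$, where $\mu=-|\xi|^2$ is the symbol of $L=\Delta$. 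This is the analogue of \eqref{eq:Dindependantdex} in the proof of Lemma \ref{lem:normeD}: the operator-valued matrix $A\otimes L$ becomes, on the Fourier side, multiplication by the ordinary matrix $\mu A$, so $(I-hA\otimes L)^{-1}$ becomes multiplication by $(I-h\mu A)^{-1}$, which is well defined since $h\mu=-h|\xi|^2\in\C^-$ and $AS$-stability guarantees $(I-\lambda A)$ is invertible for $\lambda\in\C^-$.

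The key step is then the scalar bound. Setting $\lambda=h\mu=-h|\xi|^2\in\C^-$, $AS$-stability provides a constant $M>0$ with $|\lambda b^t(I-\lambda A)^{-1}|\le M$ for all $\lambda\in\C^-$ (here $|\cdot|$ is, say, the operator norm of the row vector from the hermitian $\C^s$ to $\C$, equivalently its Euclidean length). Hence for each $\xi$,
\begin{equation*}
  \bigl|\,h\mu\, b^t(I-h\mu A)^{-1}\hat F(\xi)\,\bigr| \le M\,|\hat F(\xi)|_{\C^s}.
\end{equation*}
Squaring, multiplying by $(1+|\xi|^2)^\sigma$, and integrating over $\xi\in\R^d$ gives, by Plancherel on $H^\sigma$ and the definition \eqref{eq:norme1} of the norm on $(H^\sigma)^s$,
\begin{equation*}
  \bigl\| (hb^t\otimes L)(I-hA\otimes L)^{-1} F \bigr\|_{H^\sigma}^2
  \le M^2 \int_{\R^d} |\hat F(\xi)|_{\C^s}^2 (1+|\xi|^2)^\sigma \dd\xi
  = M^2 \|F\|_{(H^\sigma)^s}^2,
\end{equation*}
so the operator norm is bounded by $C=M$, uniformly in $h>0$. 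The torus case $\Omega=\T^d$ is identical with sums over $k\in\Z^d$ replacing integrals and $\mu=-|k|^2$.

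The main obstacle — really the only non-bookkeeping point — is justifying the Fourier-multiplier reduction cleanly: one must check that $(I-hA\otimes L)^{-1}$ is a well-defined bounded operator on $(H^\sigma)^s$ whose symbol is exactly $(I-h\mu A)^{-1}$, i.e. that inverting the operator-valued matrix and inverting the Fourier symbol commute. This follows because $A\otimes L$ acts diagonally in frequency (its symbol at $\xi$ is the finite matrix $-|\xi|^2 A$), so $I-hA\otimes L$ has symbol $I+h|\xi|^2 A$, which is invertible for every $\xi$ with inverse norm $\le \sup_{\lambda\in\C^-}\|(I-\lambda A)^{-1}\| <\infty$ — here one needs either to invoke the uniform bound in the definition of $ASI$-stability or simply to note that $AS$-stability as stated already forces $(I+h|\xi|^2 A)^{-1}$ to exist for all $\xi$, and that $\|(I-\lambda A)^{-1}\|$ is continuous and tends to $\|I\|=1$ as $|\lambda|\to\infty$ on $\C^-$ while being finite on compacts, hence bounded. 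With that uniform bound in hand, $(I-hA\otimes L)^{-1}$ is a bounded Fourier multiplier on $(H^\sigma)^s$ with the claimed symbol, and the computation above closes the argument.
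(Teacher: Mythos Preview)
Your argument is correct and follows the same route as the paper: diagonalize $L$ via the Fourier transform (the paper phrases this as the orthogonal spectral decomposition over the eigenspaces $V_\lambda=(\mathrm{Ker}(L-\lambda I))^s$), reduce to the scalar row-vector quantity $\lambda b^t(I-\lambda A)^{-1}$ at each frequency with $\lambda=-h|\xi|^2\in\C^-$, and invoke $AS$-stability for the uniform estimate.

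One correction to your last paragraph, though it does not affect the main argument. The claim that $\|(I-\lambda A)^{-1}\|\to 1$ as $|\lambda|\to\infty$ is false in general: if $A$ is invertible the limit is $0$, and if $A$ has a nontrivial nilpotent part at the eigenvalue $0$ the norm can blow up (take $A=\begin{psmallmatrix}0&1\\0&0\end{psmallmatrix}$, for which $(I-\lambda A)^{-1}=\begin{psmallmatrix}1&\lambda\\0&1\end{psmallmatrix}$). In particular $AS$-stability does \emph{not} by itself imply a uniform bound on $(I-\lambda A)^{-1}$; that is precisely the content of $ASI$-stability, which the paper keeps as a separate hypothesis. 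Fortunately you do not need this: the composite operator can be defined directly as the Fourier multiplier with row-vector symbol $\lambda b^t(I-\lambda A)^{-1}$, well defined at every $\xi$ since $AS$-stability already includes the invertibility of $I-\lambda A$ on $\C^-$, and the uniform bound on this symbol is exactly what $AS$-stability provides. So simply drop the attempt to bound $(I-hA\otimes L)^{-1}$ on its own and interpret the composition as a single multiplier; the first two paragraphs then constitute a complete proof.
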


\begin{proposition}
\label{prop:ISstable}
  Assume the Runge--Kutta method is $IS$-stable and $L=i\Delta$.
  There exists a constant $C>0$ such that for all $h>0$, we have
  \begin{equation*}
    \|\left(h b^t \otimes L\right) \left(I-hA\otimes L\right)^{-1}\|_{(H^\sigma)^s\to H^\sigma}
    \leq C.
  \end{equation*}
\end{proposition}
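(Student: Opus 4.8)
The plan is to diagonalise $L=i\Delta$ by the Fourier transform and thereby reduce the claimed operator bound to the scalar uniform bound furnished by $IS$-stability. On $\R^d$ the operator $i\Delta$ acts on the Fourier side as multiplication by the symbol $-i|\xi|^2$, which is purely imaginary for every $\xi$. Setting $\lambda(\xi):=-ih|\xi|^2\in i\R$, and using that tensoring with a constant matrix commutes with the Fourier transform (exactly as in the first lines of the proof of Lemma \ref{lem:normeD}), one gets, for $F\in\Hss$ and $\xi\in\R^d$,
\[
\widehat{(hb^t\otimes L)(I-hA\otimes L)^{-1}F}(\xi)=\lambda(\xi)\,b^t\,(I-\lambda(\xi)A)^{-1}\,\hat F(\xi).
\]
Here $I-\lambda(\xi)A$ is invertible for every $\xi$ because $\lambda(\xi)\in i\R$ and the method is $IS$-stable; the inverse $(I-hA\otimes L)^{-1}$ is thus a well-defined (a priori unbounded) Fourier multiplier, and the composed operator is the Fourier multiplier with $\C^{1\times s}$-valued symbol $\xi\mapsto\lambda(\xi)b^t(I-\lambda(\xi)A)^{-1}$, which is continuous (rational in $|\xi|^2$) and, as shown next, bounded.

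Then I would invoke $IS$-stability directly: there is a constant $C>0$ depending only on $A$ and $b$ such that $\sup_{\mu\in i\R}|\mu\,b^t(I-\mu A)^{-1}|\leq C$, where $|\cdot|$ denotes the Euclidean norm on $\C^s$ (equivalently the $\C^s\to\C$ operator norm of the row vector). Since $\lambda(\xi)=-ih|\xi|^2\in i\R$ for every $h>0$ and every $\xi\in\R^d$, this gives $|\lambda(\xi)b^t(I-\lambda(\xi)A)^{-1}|\leq C$ uniformly in $\xi$ and in $h$. I conclude by Plancherel: for $F=(f_1,\dots,f_s)^t\in\Hss$,
\[
\|(hb^t\otimes L)(I-hA\otimes L)^{-1}F\|_{\Hs}^2=\int_{\R^d}\bigl|\lambda(\xi)b^t(I-\lambda(\xi)A)^{-1}\hat F(\xi)\bigr|^2(1+|\xi|^2)^\sigma\,\dd\xi,
\]
and bounding the integrand by $C^2|\hat F(\xi)|^2(1+|\xi|^2)^\sigma=C^2\sum_{p=1}^s|\hat f_p(\xi)|^2(1+|\xi|^2)^\sigma$ yields $\|(hb^t\otimes L)(I-hA\otimes L)^{-1}F\|_{\Hs}\leq C\|F\|_{\Hss}$, which is the claim. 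The cases $\Omega=\T^d$ and $\Omega$ a bounded Lipschitz domain are identical, replacing the Fourier integral by the sum over $k\in\Z^d$ in \eqref{eq:norme2} or over the Dirichlet eigenvalues $(-\lambda_k^2)$ in \eqref{eq:norme3} (still purely imaginary after multiplication by $i$); this is also precisely the proof of Proposition \ref{prop:ASstable}, with $i\R$ replaced by $\C^-$ and $IS$-stability by $AS$-stability.

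The only genuine subtlety — and the one place where $IS$-stability (rather than mere pointwise invertibility of $I-\mu A$) is essential — is the uniformity in $h$: what makes a single constant work for all time steps is that the whole family of symbols $\{-ih|\xi|^2:h>0,\ \xi\in\R^d\}$ remains inside the single line $i\R$, on which $\mu\mapsto\mu b^t(I-\mu A)^{-1}$ is globally bounded. Equivalently, the estimate is scale-invariant under $|\xi|^2\mapsto h|\xi|^2$, so no CFL-type restriction on $h$ is needed. If the spectrum of $L$ left $i\R$ (as for $L=\Delta$), the same argument would require boundedness of $\mu b^t(I-\mu A)^{-1}$ on all of $\C^-$, i.e. $AS$-stability; this is the structural reason the NLS and NLH cases call for $\hat I$-stability and $\hat A$-stability respectively.
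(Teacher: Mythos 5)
Your proof is correct and follows essentially the same route as the paper: the appendix proves Proposition \ref{prop:ASIstable} by decomposing along the spectrum of $L$ (eigenspaces on the torus, Fourier multipliers in the continuous case) so that $(I-hA\otimes L)^{-1}$ acts fibrewise as $(I-h\lambda A)^{-1}$ with $h\lambda$ confined to $i\R$ (resp.\ $\C^-$), and states that Propositions \ref{prop:ASstable} and \ref{prop:ISstable} follow by the same technique adapted to the non-endomorphism $(hb^t\otimes L)(I-hA\otimes L)^{-1}$, which is exactly your reduction of the claim to the uniform bound on $\mu\mapsto\mu b^t(I-\mu A)^{-1}$ over $i\R$ given by $IS$-stability, followed by Plancherel. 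Your remark that only pointwise invertibility of $I-\lambda(\xi)A$ plus boundedness of the composed symbol is needed (rather than boundedness of $(I-hA\otimes L)^{-1}$ itself) is accurate and correctly isolates where $IS$-stability, as opposed to $ISI$-stability, enters.
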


\begin{proposition}
  \label{prop:ASIstable}
  Assume the Runge--Kutta method is $ASI$-stable and $L=\Delta$.
  There exists a constant $C>0$ such that for all $h>0$, we have
  \begin{equation*}
    \|\left(I-hA\otimes L\right)^{-1}\|_{(H^\sigma)^s\to (H^\sigma)^s}
    \leq C.
  \end{equation*}
\end{proposition}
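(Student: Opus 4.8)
The plan is to reduce the operator-norm estimate to a scalar (or at worst finite-dimensional matrix) estimate via the spectral decomposition of $L=\Delta$, and then invoke $ASI$-stability. Working on $\Omega=\R^d$, apply the Fourier transform: for $F\in(H^\sigma)^s$ with components $f_1,\dots,f_s$, one has $\widehat{\Delta f_p}(\xi) = -|\xi|^2 \hat f_p(\xi)$, so that $(A\otimes L)$ acts on $\hat F(\xi)\in\C^s$ as multiplication by the matrix $-|\xi|^2 A$. Hence, pointwise in $\xi$,
\begin{equation*}
  \widehat{\bigl((I-hA\otimes L)^{-1}F\bigr)}(\xi) = \bigl(I + h|\xi|^2 A\bigr)^{-1}\hat F(\xi),
\end{equation*}
provided the matrix $I+h|\xi|^2 A$ is invertible. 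Setting $\lambda = -h|\xi|^2 \leq 0$, this matrix is exactly $I - \lambda A$ with $\lambda\in\C^-$ (in fact $\lambda\in\R^-$), so $ASI$-stability guarantees both its invertibility and a uniform bound $\|(I-\lambda A)^{-1}\|_{\C^s\to\C^s}\leq C_0$ for all $\lambda\in\C^-$, with $C_0$ independent of $h$ and $\xi$.

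First I would establish the pointwise-in-Fourier identity above, checking carefully that $(A\otimes L)$ with the operator-valued matrix notation from Section \ref{subsec:underlyingRK} really does correspond, under Fourier transform, to the constant matrix $-|\xi|^2 A$ acting on $\C^s$ — this is the analogue of \eqref{eq:Dindependantdex} in the proof of Lemma \ref{lem:normeD}. Next I would note that since $\lambda=-h|\xi|^2$ ranges over $\R^-\subset\C^-$ as $\xi$ ranges over $\R^d$, the hypothesis of $ASI$-stability applies to give the uniform constant $C_0$. Then I would conclude by integrating: using the norm \eqref{eq:norme1} and the (equivalence of norms, or directly the) bound on $\C^s$,
\begin{equation*}
  \|(I-hA\otimes L)^{-1}F\|_{(H^\sigma)^s}^2
  = \int_{\R^d} \bigl|(I+h|\xi|^2 A)^{-1}\hat F(\xi)\bigr|^2 (1+|\xi|^2)^\sigma \dd\xi
  \leq C_0^2 \int_{\R^d} |\hat F(\xi)|^2 (1+|\xi|^2)^\sigma \dd\xi,
\end{equation*}
which is $C_0^2\|F\|_{(H^\sigma)^s}^2$. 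Taking square roots gives the claim with $C=C_0$. The torus case is identical with sums over $k\in\Z^d$ replacing integrals, and $\lambda=-h|k|^2$.

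I do not expect a serious obstacle here; the statement is essentially a transcription of the definition of $ASI$-stability through the spectral calculus for $\Delta$. The one point requiring a little care is the bookkeeping between the Euclidean norm on $\C^s$ and the Sobolev-type norm \eqref{eq:norme1} on $(H^\sigma)^s$: one must make sure the matrix norm bound is applied fibrewise inside the integral before integrating against the weight $(1+|\xi|^2)^\sigma$, which is legitimate because the matrix $(I+h|\xi|^2 A)^{-1}$ does not depend on the component index $p$. A secondary subtlety worth a sentence is why $I+h|\xi|^2 A$ is invertible for every $\xi$: this is precisely part of the $ASI$-stability hypothesis (invertibility of $I-\lambda A$ for all $\lambda\in\C^-$), which in particular covers $\lambda=0$ and all $\lambda\in\R^-$. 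Since this proposition and its siblings (Propositions \ref{prop:ASstable} and \ref{prop:ISstable}) are explicitly deferred to Appendix \ref{subsec:proofASIstable}, I would keep the main-text statement as is and present this argument there.
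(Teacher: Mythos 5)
Your argument is correct and is essentially the paper's own proof: both reduce the estimate, via the spectral calculus for $\Delta$ (Fourier transform on $\R^d$ in your case, the orthogonal eigenspace decomposition on the torus in the appendix), to the fibrewise matrix bound $\sup_{\lambda\le 0}\|(I-h\lambda A)^{-1}\|_2\le C_0$ supplied by $ASI$-stability, and then integrate against the weight $(1+|\xi|^2)^\sigma$. The paper itself notes that the torus argument ``has to be adapted'' for $L=\Delta$ on $\R^d$ ``but the spirit is the same,'' which is exactly what you have done.
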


\begin{proposition}
  \label{prop:ISIstable}
  Assume the Runge--Kutta method is $ISI$-stable and $L=i\Delta$.
  There exists a constant $C>0$ such that for all $h>0$, we have
  \begin{equation*}
    \|\left(I-hA\otimes L\right)^{-1}\|_{(H^\sigma)^s\to (H^\sigma)^s}
    \leq C.
  \end{equation*}
\end{proposition}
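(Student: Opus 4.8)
The plan is to diagonalize $L=i\Delta$ by the Fourier transform, so that the operator-valued matrix $I-hA\otimes L$ becomes, frequency by frequency, an ordinary $s\times s$ complex matrix of the form $I-\lambda A$, and then to invoke $ISI$-stability. The key observation is that the Fourier symbol of $i\Delta$ is $\widehat{L}(\xi)=-i|\xi|^2$, which is purely imaginary, so the relevant spectral parameter always lies on $i\R$ --- exactly the set on which $ISI$-stability provides a uniform bound. In this sense the proposition is the $i\R$-analogue of Proposition \ref{prop:ASIstable} (where $L=\Delta$ has the real symbol $-|\xi|^2\in\C^-$), and the argument runs in parallel; the details are given in Section \ref{subsec:proofASIstable}.

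First I would establish, exactly as in \eqref{eq:Dindependantdex}, that for every $F\in\Hss$ and every $\xi$ one has $\widehat{(A\otimes L)F}(\xi)=\widehat{L}(\xi)\,A\,\hat F(\xi)$; hence $I-hA\otimes L$ acts on the Fourier side as multiplication by the matrix $M(\xi):=I-\lambda(\xi)A\in\mathcal{M}_s(\C)$, where $\lambda(\xi):=-ih|\xi|^2\in i\R$. (On $\T^d$ one replaces $\xi$ by $k\in\Z^d$; in the bounded-domain case one replaces it by the eigenvalue index, with $\lambda=-ih\lambda_k^2\in i\R$.)

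Next, by $ISI$-stability, the matrix $I-\lambda A$ is invertible for every $\lambda\in i\R$ and there is a constant $C_0>0$, independent of $\lambda$, with $\|(I-\lambda A)^{-1}\|_{\C^s\to\C^s}\leq C_0$, the norm on $\C^s$ being the hermitian one underlying the definition \eqref{eq:norme1}--\eqref{eq:norme2} of $\|\cdot\|_{\Hss}$ (the finite dimension makes this choice immaterial for the \emph{boundedness}, but working with that norm keeps the constants clean). Specializing $\lambda$ to $\lambda(\xi)$ for each $\xi$ shows that $M(\xi)$ is invertible with $\|M(\xi)^{-1}\|_{\C^s\to\C^s}\leq C_0$, and therefore $I-hA\otimes L$ is invertible on $\Hss$ with $\widehat{(I-hA\otimes L)^{-1}F}(\xi)=M(\xi)^{-1}\hat F(\xi)$.

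Finally I would conclude by Plancherel:
\[\|(I-hA\otimes L)^{-1}F\|_{\Hss}^2=\int_{\R^d}|M(\xi)^{-1}\hat F(\xi)|^2(1+|\xi|^2)^\sigma\,\dd\xi\leq C_0^2\int_{\R^d}|\hat F(\xi)|^2(1+|\xi|^2)^\sigma\,\dd\xi=C_0^2\|F\|_{\Hss}^2,\]
so the estimate holds with $C=C_0$. Note the bound is genuinely independent of $h>0$, since $\{\lambda(\xi):\xi\in\R^d\}=-ih[0,+\infty)\subset i\R$ for every $h>0$. There is no serious obstacle here: the only points requiring care are the bookkeeping of which matrix norm on $\C^s$ is used and checking that $\lambda(\xi)$ indeed stays on $i\R$ (so that only $ISI$-stability, and not the stronger full-plane version, is needed); the genuine content lies not in this proposition but in verifying $ISI$-stability for the concrete Runge--Kutta methods, which is done elsewhere.
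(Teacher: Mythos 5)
Your proof is correct and follows essentially the same route as the paper: diagonalize $L$ (by Fourier transform on $\R^d$ or $\T^d$, equivalently by the orthogonal eigenspace decomposition used in Section \ref{subsec:proofASIstable}), observe that frequency by frequency the operator acts as $I-\lambda A$ with $\lambda=-ih|\xi|^2\in i\R$, invoke the uniform bound on $\|(I-\lambda A)^{-1}\|_2$ from $ISI$-stability, and conclude by Plancherel. This is exactly the adaptation of the proof of Proposition \ref{prop:ASIstable} that the paper indicates, with the spectrum relocated from $\C^-$ to $i\R$.
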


\subsubsection{Well-posedness of the quasi-Runge--Kutta step}

\begin{proposition}\label{prop:RKwd}
  If the chosen Runge--Kutta method is ASI-stable and $L=\Delta$ or if it is ISI-stable and $L=i\Delta$,
  then the internal linear step \eqref{eq:systlinvect} of the linearly implicit method
  \eqref{eq:heritagegamma}-\eqref{eq:systlinvect}-\eqref{eq:RKexplvect} is well defined for $h>0$ sufficiently
  small, where ``sufficiently small'' depends only on the norm of $\Gamma_n$ in $(H^\sigma)^s$.
\end{proposition}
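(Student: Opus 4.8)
The plan is to recast the linear system \eqref{eq:systlinvect} as a fixed-point equation to which a Neumann series argument applies, using the uniform-in-$h$ bound on $(I-hA\otimes L)^{-1}$ provided by Proposition \ref{prop:ASIstable} (in the case $L=\Delta$) or Proposition \ref{prop:ISIstable} (in the case $L=i\Delta$), together with the algebra property of $\Hs$. First I would observe that, since $\sigma>d/2$, the space $\Hs$ is a Banach algebra, so for each $i$ the multiplication operator $f\mapsto \gamma_{n,i} f$ is bounded on $\Hs$ with operator norm at most $C_\sigma \|\gamma_{n,i}\|_{\Hs}$, where $C_\sigma$ is the algebra constant. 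Consequently the linear map $V\mapsto \Gamma_n\bullet V$ is bounded on $\Hss$ with norm controlled by $C_\sigma\|\Gamma_n\|_{\Hss}$ (up to the fixed factor relating $\|\cdot\|_{\Hss}$ and $\|\cdot\|_{\Hss,\infty}$). Composing with the scalar matrix $A$, which acts on $\Hss$ as a bounded operator with a norm depending only on $A$ and $s$, the map $\Phi : V\mapsto A(\Gamma_n\bullet V)$ is bounded on $\Hss$, with $\|\Phi\|_{\Hss\to\Hss}\leq C_A C_\sigma\|\Gamma_n\|_{\Hss}$ for some constant $C_A$ depending only on $A$ and $s$.

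Next, by Proposition \ref{prop:ASIstable} or \ref{prop:ISIstable}, there is a constant $C>0$, independent of $h>0$, with $\|(I-hA\otimes L)^{-1}\|_{\Hss\to\Hss}\leq C$; in particular $I-hA\otimes L$ is boundedly invertible on $\Hss$. Applying $(I-hA\otimes L)^{-1}$ to both sides of \eqref{eq:systlinvect}, I would rewrite it as
\begin{equation*}
  \bigl(I - h(I-hA\otimes L)^{-1}\Phi\bigr) U_n = (I-hA\otimes L)^{-1}(u_n\mathds{1}).
\end{equation*}
The operator $h(I-hA\otimes L)^{-1}\Phi$ on $\Hss$ has norm at most $h\, C\, C_A C_\sigma\|\Gamma_n\|_{\Hss}$, which is strictly less than $1$ as soon as $h< (C C_A C_\sigma\|\Gamma_n\|_{\Hss})^{-1}$. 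For such $h$, the operator $I - h(I-hA\otimes L)^{-1}\Phi$ is invertible on $\Hss$ by the Neumann series, so the system \eqref{eq:systlinvect} admits a unique solution $U_n\in\Hss$; the threshold on $h$ depends only on $\|\Gamma_n\|_{\Hss}$ and on the fixed data $A$, $s$, $\sigma$, as claimed.

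The only delicate point is the uniform-in-$h$ invertibility of $I-hA\otimes L$ on $\Hss$, but this is precisely the content of Propositions \ref{prop:ASIstable} and \ref{prop:ISIstable}; everything else is a soft perturbation argument resting on the algebra property of $\Hs$. One should simply be careful to check that the smallness condition on $h$ does not involve $u_n$ --- it does not, since $u_n$ enters only the right-hand side --- but only $\Gamma_n$, which is what the statement requires.
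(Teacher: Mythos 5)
Your proof is correct and follows essentially the same route as the paper's: rewrite \eqref{eq:systlinvect} as $\bigl(I-h(I-hA\otimes L)^{-1}A\Gamma_n\bullet\bigr)U_n=(I-hA\otimes L)^{-1}u_n\mathds{1}$, invoke Proposition \ref{prop:ASIstable} or \ref{prop:ISIstable} for the uniform-in-$h$ bound, and conclude by a Neumann series. The only difference is that you spell out, via the algebra property of $\Hs$, why $V\mapsto A(\Gamma_n\bullet V)$ is bounded with norm controlled by $\|\Gamma_n\|$, a point the paper leaves implicit.
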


\begin{proof}
  Assume the method is ISI-stable and $L=i\Delta$ (the proof follows the very same lines in the other case).
  Since the method is ISI-sable and $L=i\Delta$, proposition \ref{prop:ISIstable} ensures
  that for all $h>0$, the operator $(I-hA\otimes L)$ has bounded inverse from $(H^\sigma)^s$ to itself,
  and its operator norm is bounded by a constant that does not depend on $h$.
  Solving \eqref{eq:systlinvect} in $(H^\sigma)^s$ is indeed equivalent to solving for $U_n$ in $(H^\sigma)^s$
  \begin{equation*}
    \left(I-h\left(I-hA\otimes L\right)^{-1}A\Gamma_n\bullet\right) U_n
    = \left(I - h A\otimes L\right)^{-1}u_n\mathds{1}.
  \end{equation*}
  Since the operator norm of $\left(I-hA\otimes L\right)^{-1}A\Gamma_n\bullet$ is bounded by the product
  of the operator norm of $\left(I-hA\otimes L\right)^{-1}$ by that of $U\mapsto A\Gamma_n\bullet U$,
  and since the latter only depends on a bound on $\Gamma_n$, the result follows using a Neumann series argument.
\end{proof}

\subsection{Hypotheses and main result}
\label{subsec:hypoandresult}
With the hypotheses on the exact solution and on $T$ introduced
in Section \ref{ssec:consistencyerrors},
we choose some $r>0$ and we denote by $V$ an $r$-neighbourhood of the exact solution
of \eqref{eq:general} defined by
\begin{equation}
  \label{eq:defV}
  V=\{u(t)+v,\ |\ t\in[0,T]\text{ and } v\in \Hs\text{ with }\|v\|_\Hs \leq r\}.
\end{equation}

We make the following hypotheses on the nonlinearity $N$ in \eqref{eq:general} and on the
exact solution of the Cauchy problem.
\begin{hypo}
  \label{hyp:Nbb}
  The function $N$ sends bounded sets of $\Hs$ to bounded sets of $\Hs$.
\end{hypo}

\begin{hypo}
  \label{hyp:NLip}
  The function $N$ is Lipschitz continuous on bounded sets of $\Hs$.
\end{hypo}

\begin{hypo}
  \label{hyp:uex}
  The exact solution $u$ has $s+2$ continuous derivatives with values in $\Hs$ on $(T_\star,T^\star)$.
\end{hypo}

\begin{hypo}
  \label{hyp:Nou}
  The function $t\mapsto N(u(t))$ has $s$ continuous derivatives in $H^\sigma$ on $(T_\star,T^\star)$.
\end{hypo}

We make the following assuptions on the numerical method
\eqref{eq:heritagegamma}-\eqref{eq:systlin}-\eqref{eq:RKexpl}.

\begin{hypo}
  \label{hyp:RKcol}
  The Runge--Kutta method is a collocation method with $s$ distinct points
  \break $0\leq c_1<\dots<c_s\leq 1$ (see Definition \ref{def:colmeth}).
\end{hypo}

\begin{hypo}
  \label{hyp:linimpl}
  The matrix $D\in\mathcal M_s(\C)$ and the vector $\Theta\in\C^s$ are such that
  the step \eqref{eq:heritagegamma} is strongly stable and consistent of order $s$.
  We denote by $\delta$ a fixed number in $(\rho(D),1)$, and by $\|\cdot\|_{\Hss,D}$
  the norm on $\Hss$ provided by Lemma \ref{lem:normeD}.
\end{hypo}

Using Hypothesis \ref{hyp:Nou}, we have that $N\circ u([0,T])$ is a bounded set of $\Hs$ and we denote
by $M>0$ a bound on this set. Moreover, with Hypothesis \ref{hyp:Nbb}, the set $N(V)$ is bounded
and we denote by $m>0$ a constant such that
\begin{equation}
  \label{eq:Mnm}
  \forall\, v\in V,\qquad \|N(v)\|_\Hs < M+m.
\end{equation}

Let us recall that we chose $\sigma>d/2$ so that $H^\sigma$ is an algebra
(see Section \ref{subsec:functionalframework}).

\begin{theorem}\label{th:convergence}
  Assume that the function $N$ in \eqref{eq:general} satisfies hypotheses \ref{hyp:Nbb} and
  \ref{hyp:NLip}.
  Assume $u^0\in\Hs$ is fixed and that $T\in(0,T^\star)$, $r>0$ and $V$
  are defined as in \eqref{eq:defV}.
  Assume the exact solution satisfies hypotheses \ref{hyp:uex} and \ref{hyp:Nou}.
  Assume $M>0$ and $m>0$ are defined as in \eqref{eq:Mnm}.

  Assume that the underlying Runge--Kutta method satisfies hypothesis \ref{hyp:RKcol}
  and that the step \eqref{eq:heritagegamma} satisfies hypothesis \ref{hyp:linimpl}.
  Assume that the underlying Runge--Kutta method is $\hat{I}$-stable and $L=i\Delta$,
  or that the underlying Runge--Kutta method is $\hat{A}$-stable and $L=\Delta$.

  There exists a constant $C>0$, a small $h_0\in (0,T)$ and a neighbourhood of $u^0\in \Hs$
  such that for all $u_0$ in this neighbourhood, all $h\in (0,h_0)$
  and all $\gamma_{-1,1},\dots,\gamma_{-1,s}$ sufficiently close their continuous
  analogues $N(u(t_{-1}+c_1h)),\dots,N(u(t_{-1}+c_sh))$,
  the numerical method \eqref{eq:heritagegamma}, \eqref{eq:systlin}, \eqref{eq:RKexpl} is well defined
  for all $n\in\N$ such that $0\leq nh\leq T$.
  Moreover, for such $n$ and $h$, the method satisfies
  \begin{equation}
    \label{eq:estimgamma}
    \forall i\in\{1,\dots,s\},\qquad \|\gamma_{n-1,i}\|_{\Hs}\leq M+m,
  \end{equation}
  and
  \begin{equation}
    \label{eq:estimerreur}
    \max_{0\leq k \leq n} \|u(t_k)-u_k\|_\Hs \leq e^{Cnh} \left(
      \left\|u^0-u_0\right\|_\Hs + C \left(\max_{i\in\{1,\dots, s\}} \left\|N(u(t_{-1}+c_ih))-\gamma_{-1,i}\right\|_\Hs+h^s
      \right)
    \right).
  \end{equation}
\end{theorem}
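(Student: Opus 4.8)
The plan is to set up a standard convergence-by-induction argument on $n$, controlling simultaneously the three families of convergence errors $P_n$, $Q_n$ and $e_n$ introduced above, together with the a priori bounds \eqref{eq:estimgamma}. First I would establish the induction hypothesis: for a suitable $h_0$ and a suitably small neighbourhood of $u^0$, assume that for all $k\leq n$ one has $u_k\in V$ (so $\|e_k\|_{\Hs}\leq z_n\leq r$), that $\|\gamma_{k-1,i}\|_{\Hs}\leq M+m$ for all $i$, and that the method is well defined up to step $n$. The initialization follows from the hypotheses on $u_0$ and $\Gamma_{-1}$ together with Proposition \ref{prop:RKwd} (using that the bound on $\Gamma_n$ in $(H^\sigma)^s$ is controlled uniformly, hence $h_0$ can be chosen once and for all).

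Next I would derive the recursive estimates. For $P_n$: subtract \eqref{eq:heritagegamma} from the definition of $R_n^1$ to get $P_n = D P_{n-1} + \Theta\bigl(N(u(t_n))-N(u_n)\bigr) + R_n^1$; using the norm $\|\cdot\|_{\Hss,D}$ from Lemma \ref{lem:normeD} (contraction factor $\delta<1$), Hypothesis \ref{hyp:NLip} on $V$, and the consistency bound $\|R_n^1\|_{\Hss}\leq Ch^s$, one obtains $\|P_n\|_{\Hss,D}\leq \delta\|P_{n-1}\|_{\Hss,D} + C(\|e_n\|_{\Hs}+h^s)$, hence by summing the geometric series $\|P_n\|_{\Hss}\leq C\bigl(\delta^{\,n+1}\|P_{-1}\|_{\Hss} + \max_{k\leq n}\|e_k\|_{\Hs} + h^s\bigr)$. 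For $Q_n$: subtract \eqref{eq:systlinvect} from the definition of $R_n^2$. Writing $W_n$ for the exact-solution vector $(u(t_n+c_ih))_i$, one gets $(I-hA\otimes L)Q_n = e_n\mathds 1 + hA\bigl(\text{(terms in }P_n, Q_n, e_n)\bigr) + R_n^2$, where the bracketed terms come from expanding $N(u(t_n+c_jh))u(t_n+c_jh) - \gamma_{n,j}u_{n,j}$ and are bounded, using that $\Hs$ is an algebra, Hypothesis \ref{hyp:Nbb}, the a priori bounds, and the induction hypothesis, by $C(\|P_n\|_{\Hss}+\|Q_n\|_{\Hss}+\|e_n\|_{\Hs})$. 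Applying $(I-hA\otimes L)^{-1}$ — which is uniformly bounded on $(H^\sigma)^s$ by Proposition \ref{prop:ISIstable} (resp. \ref{prop:ASIstable}) — and absorbing the $hC\|Q_n\|_{\Hss}$ term into the left side for $h$ small, yields $\|Q_n\|_{\Hss}\leq C(\|e_n\|_{\Hs}+\|P_n\|_{\Hss}+h^{s+1})\leq C(z_n+h^s)$ after inserting the bound on $P_n$. For $e_{n+1}$: subtract \eqref{eq:RKexplvect} from the definition of $R_n^3$, getting $e_{n+1} = e_n + h(b^t\otimes L)Q_n + h\,b^t(\text{terms in }P_n,Q_n,e_n) + R_n^3$; the crucial point is that $h(b^t\otimes L)Q_n = \bigl(hb^t\otimes L)(I-hA\otimes L)^{-1}\bigr)\bigl((I-hA\otimes L)Q_n\bigr)$, and the operator $(hb^t\otimes L)(I-hA\otimes L)^{-1}$ is uniformly bounded by Proposition \ref{prop:ISstable} (resp. \ref{prop:ASstable}), so $h(b^t\otimes L)Q_n$ is controlled by $C(\|e_n\|_{\Hs}+h\|P_n\|_{\Hss}+h\|Q_n\|_{\Hss}+\|R_n^2\|_{\Hss})\leq C h(z_n+h^s)$. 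Collecting everything gives $\|e_{n+1}\|_{\Hs}\leq (1+Ch)\|e_n\|_{\Hs} + Ch(z_n+h^s)+Ch^{s+1}$, hence $z_{n+1}\leq (1+Ch)z_n + Ch(\|P_{-1}\|_{\Hss}+h^s)$, and a discrete Gronwall argument produces \eqref{eq:estimerreur} with $\|P_{-1}\|_{\Hss}=\max_i\|N(u(t_{-1}+c_ih))-\gamma_{-1,i}\|_{\Hs}$.

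It remains to close the induction, i.e. to check that the a priori conditions are preserved. Since $\|e_{n+1}\|_{\Hs}$ is bounded by the right-hand side of \eqref{eq:estimerreur} evaluated at $n+1\leq T/h$, which is $\leq e^{CT}\bigl(\|u^0-u_0\|_{\Hs}+C(\|P_{-1}\|_{\Hss}+h^s)\bigr)$, choosing $h_0$ small enough and the neighbourhoods of $u^0$ and of $(N(u(t_{-1}+c_ih)))_i$ small enough makes this $<r$, so $u_{n+1}\in V$. Likewise $\|\gamma_{n,i}\|_{\Hs}\leq \|N(u(t_n+c_ih))\|_{\Hs}+\|P_n\|_{\Hss}\leq M + C(\|P_{-1}\|_{\Hss}+z_n+h^s)$, which is $\leq M+m$ after shrinking the data neighbourhoods and $h_0$ once more (note $z_n<r$ is already available). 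Well-posedness of step $n+1$ then follows from Proposition \ref{prop:RKwd} since the bound on $\Gamma_n$ is uniform. This completes the induction and hence the proof.

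\textbf{Main obstacle.} The delicate point is the estimate for $e_{n+1}$: a naive bound $\|h(b^t\otimes L)Q_n\|_{\Hs}\leq h\|b^t\otimes L\|\,\|Q_n\|_{\Hss}$ is useless because $L$ is unbounded. The whole scheme works only because one rewrites $h(b^t\otimes L)Q_n$ through $(I-hA\otimes L)$, picking up the factor $(hb^t\otimes L)(I-hA\otimes L)^{-1}$, which is uniformly bounded precisely by the $\hat A$/$\hat I$-stability hypotheses (Propositions \ref{prop:ASstable}--\ref{prop:ISIstable}); this is where all four stability propositions enter, and getting the bookkeeping of the $P_n$, $Q_n$, $e_n$ coupling right — so that every $\|Q_n\|$ term on the right of the $e$-recursion carries a genuine factor $h$ while the $z_n$ terms do not grow the Gronwall constant — is the crux. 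A secondary technical nuisance is uniformity in $h$ and in $n$ of all constants $C$, and the joint choice of $h_0$ and of the two data neighbourhoods so that the induction genuinely closes; this is routine but must be done carefully.
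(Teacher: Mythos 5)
Your overall architecture matches the paper's: the same three coupled recursions for $P_n$, $Q_n$, $e_n$, the same use of the contraction norm $\|\cdot\|_{\Hss,D}$ from Lemma \ref{lem:normeD}, the same rewriting of $h(b^t\otimes L)Q_n$ through $(I-hA\otimes L)$, and the same two-part bootstrap to close the a priori bounds. However, there is a genuine gap in your estimate for $e_{n+1}$. After substituting \eqref{eq:systQn}, the term $h(b^t\otimes L)(I-hA\otimes L)^{-1}e_n\mathds{1}$ appears, and the $AS$-/$IS$-stability bound of Propositions \ref{prop:ASstable}--\ref{prop:ISstable} only gives $\|h(b^t\otimes L)(I-hA\otimes L)^{-1}e_n\mathds{1}\|_\Hs\leq C\|e_n\|_\Hs$ with $C=O(1)$, \emph{not} $O(h)\|e_n\|_\Hs$; your claimed chain $C(\|e_n\|_\Hs+\cdots)\leq Ch(z_n+h^s)$ is therefore false for this contribution. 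If you keep it as $C\|e_n\|_\Hs$, the recursion becomes $\|e_{n+1}\|_\Hs\leq(1+C)\|e_n\|_\Hs+\cdots$ with a constant independent of $h$, and the discrete Gronwall step then yields $(1+C)^{T/h}$, which blows up as $h\to0$; the proof does not close.

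The paper's resolution is precisely the step you omit: one must \emph{not} estimate $e_n$ and $h(b^t\otimes L)(I-hA\otimes L)^{-1}e_n\mathds{1}$ separately, but group them into $v_n=e_n+h(b^t\otimes L)(I-hA\otimes L)^{-1}e_n\mathds{1}$ and observe that on the Fourier side $\widehat{v_n}(\xi)=R(h\lambda(\xi))\widehat{e_n}(\xi)$, where $R$ is the linear stability function \eqref{eq:defR} and $\lambda(\xi)=-i\xi^2$ or $-\xi^2$. The $A$-stability (resp.\ $I$-stability) hypothesis, i.e.\ $|R|\leq1$ on $\C^-$ (resp.\ $i\R$), then gives $\|v_n\|_\Hs\leq\|e_n\|_\Hs$, which restores the $(1+Ch)$ prefactor. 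Note that this is the only place where $A$-/$I$-stability itself, as opposed to the $AS/ASI$ variants, enters the proof; your proposal never invokes it, which is a structural sign of the gap (and the paper's Example \ref{ex:notAstable} shows numerically that without $A$-/$I$-stability the method indeed fails to converge, so this hypothesis cannot be dispensed with). The rest of your bookkeeping, and the second-part closure of the induction, is consistent with the paper.
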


\subsection{Proof of Theorem \ref{th:convergence}}
\label{subsec:proof}

Let us first introduce all the notations we use in the proof of the Theorem. Let us define the convergence errors $P_n \in \Hss$ with component number $i$ equal to $(P_n)_i=N(u(t_n+c_ih))-\gamma_{n,i}$, $Q_n \in \Hss$ with component number $i$ equal to $Q_{n,i}=u(t_n+c_ih)-u_{n,i}$ (provided $u_{n,i}$ is well defined), and $e_n \in \Hs$ with $e_n=u(t_n)-u_n$. We set $z_n=\max_{0 \leq k \leq n} \|e_k\|_{\Hs}$.  
In the following proof, the letter $C$ denotes a real number greater or equal to $1$,
which does not depend on $h$ (but depends on $M$ and $r$ in particular)
and whose value may vary from one line to the other.  

\vskip0.3cm
Using Hypothesis \ref{hyp:linimpl}, step \eqref{eq:heritagegamma} is strongly stable.
Therefore, we fix $\delta \in (\rho(D),1)$ and we use Lemma \ref{lem:normeD}
to define a norm $\|\cdot\|_{\Hss,D}$ such that \eqref{eq:inegnormeD} holds. 


We divide the proof in two parts. First, we assume an a priori bound for the numerical solution.
Assume we are given an integer $\nu\in\N$ such that $t_{\nu+1}\leq T$ and for all $n\leq \nu$,
\begin{itemize}
	\item (H1) $\|\Gamma_n\|_{\Hss,\infty} \leq M+m$,
	\item (H2) the step \eqref{eq:systlin} has a unique solution $(u_{n,i})_{1\leq i \leq s}$ in $\Hss$,  
	\item (H3) $u_n \in V$. 
\end{itemize}
We show that, in this case, we have an explicit bound for the convergence errors $(P_n)_{0\leq n\leq \nu}$ and $(z_n)_{0\leq n\leq \nu}$ (see equations \eqref{eq:recestznfin} and \eqref{eq:Pnderoule3}). 

Second, we assume that $h_0$ and the initial errors $P_{-1}$ and $e_0$ are small enough and we show that the bounds of the first part of the proof are indeed satisfied. 

\vskip0.2cm
\noindent{\bf First part.} In addition to the bounds above, we assume that $h\in (0,1)$.
Let us consider $n\in\N$ with $n\leq \nu$. In particular, we have $t_{n+1} \leq T$.  
Substracting \eqref{eq:heritagegamma} from \eqref{eq:Rn1} we obtain
\begin{equation}
\label{eq:recerrPn}
P_n=DP_{n-1}+ \left(N(u(t_n))-N(u_n)\right)\left [
       \begin{matrix}
       \theta_1\\
       \vdots\\
       \theta_s
       \end{matrix}
     \right ]
+ R_n^1.
\end{equation}
We infer using estimate \eqref{eq:inegnormeD} of Lemma \ref{lem:normeD}
\begin{equation}
\label{eq:recestPn}
\|P_n\|_{\Hss,D} \leq \delta \|P_{n-1}\|_{\Hss,D} + C \|e_n\|_{\Hs} + \|R_n^1\|_{\Hss,D},
\end{equation}
where the constant $C$ is proportional to the Lipschitz constant of $N$ over the bounded set $V$ (using hypothesis \ref{hyp:NLip}). 

Substracting \eqref{eq:systlin} from \eqref{eq:Rn2} we obtain
\begin{eqnarray*}
\lefteqn{Q_{n,i}}\\
&=&e_n + h \sum_{j=1}^s a_{i,j} \left(LQ_{n,j}+N(u(t_n+c_jh))u(t_n+c_jh)-\gamma_{n,j}u_{n,j}\right) +(R_n^2)_i \\
&=& e_n + h \sum_{j=1}^s a_{i,j} LQ_{n,j}+ h \sum_{j=1}^s a_{i,j} \left(N(u(t_n+c_jh))-\gamma_{n,j}\right)u(t_n+c_jh) \\
&{}& \hskip3.5cm+h \sum_{j=1}^s a_{i,j} \gamma_{n,j}\left(u(t_n+c_jh)-u_{n,j}\right) +(R_n^2)_i \\
  &=& e_n + h \sum_{j=1}^s a_{i,j} LQ_{n,j}
      + h \sum_{j=1}^s a_{i,j} P_{n,j}u(t_n+c_jh)
      +h \sum_{j=1}^s a_{i,j} \gamma_{n,j}Q_{n,j}
      +(R_n^2)_i.
\end{eqnarray*}
Therefore, the vector $Q_n$ solves 
\begin{equation}
\label{eq:systQn}
(I-hA\otimes L)Q_n= e_n\mathds{1} +hA P_n \bullet U_{t_n}+ hA\Gamma_n \bullet Q_n+R_n^2,
\end{equation}
where $U_{t_n}$ is the vector with component $j$ equal to $u(t_n+c_jh)$.
Since the Runge-Kutta method is either $\hat{I}$-stable or $\hat{A}$-stable,
it is either ISI-stable or ASI-stable and the operator $I-hA\otimes L$ is invertible
(see propositions \ref{prop:ASIstable} and \ref{prop:ISIstable})
and the equation above is equivalent to 
$$(I-h(I-hA\otimes L)^{-1}A\Gamma_n \bullet)Q_n= (I-hA\otimes L)^{-1}\left(e_n\mathds{1} +hA P_n \bullet U_{t_n}+R_n^2\right).$$
Just as in proposition \ref{prop:RKwd} the operator in the left hand side is invertible for $h$ small depending only on a bound on $\|\Gamma_n\|_{\Hss,\infty}$. Let us denote by $\| \cdot \|_{\star}$ the operator norm in $(\Hss,\| \cdot \|_{\Hss, \infty})$. As soon as $h\| (I-hA\otimes L)^{-1}A\Gamma_n \bullet\|_{\star}<1$, we have
$$ \|Q_n\|_{\Hss,\infty} \leq C \dfrac{1}{1-h\| (I-hA\otimes L)^{-1}A\Gamma_n \bullet\|_{\star}} \left(\|e_n\|_\Hs + Ch \|P_n\|_{\Hss,\infty} + \|R_n^2\|_{\Hss,\infty}\right).$$
Note that, using (H1), $\|A\Gamma_n \bullet\|_{\star} \leq C_1 (M+m)$, where $C_1$ only depends on the matrix $A$. Since $\| (I-hA\otimes L)^{-1}\|_{\Hss \to \Hss}$ is bounded by a constant that does not depend on $h$, the same holds for $\| (I-hA\otimes L)^{-1}\|_{\star}$ with a constant $C_2$. Then, provided that $h$ is sufficiently small to ensure that $h C_1C_2(M+m) \leq 1/2$, we have
\begin{equation}
\label{eq:recestQn}
\|Q_n\|_{\Hss,\infty} \leq C\|e_n\|_\Hs + Ch \|P_n\|_{\Hss,\infty}+ C\|R_n^2\|_{\Hss,\infty}. 
\end{equation}

Substracting \eqref{eq:RKexpl} from \eqref{eq:Rn3} we obtain 
\begin{eqnarray}
e_{n+1}&=& e_n + h \sum_{i=1}^s b_i L Q_{n,i}+ h \sum_{i=1}^s b_i \left(N(u(t_n+c_ih))-\gamma_{n,i}\right)u(t_n+c_ih) \nonumber\\
&{}&\hskip3.5cm +h \sum_{i=1}^s b_i \gamma_{n,i}\left(u(t_n+c_ih)-u_{n,i}\right) +R_n^3  \nonumber\\
&=& e_n + h \sum_{i=1}^s b_i L Q_{n,i}+ h \sum_{i=1}^s b_i P_{n,i}u(t_n+c_ih) +h \sum_{i=1}^s b_i \gamma_{n,i}Q_{n,i} +R_n^3 \nonumber\\
&=& e_n + h (b^t \otimes L )Q_n + h b^t (P_n \bullet U_{t_n}) + h  b^t (\Gamma_n \bullet Q_n) + R_n^3 \nonumber\\
&=& e_n + h( b^t \otimes L )(I-hA\otimes L)^{-1}e_n\mathds{1}+h (b^t \otimes L) (I-hA\otimes L)^{-1}\left[hA P_n \bullet U_{t_n}+ hA\Gamma_n \bullet Q_n+R_n^2\right] \nonumber \\
&{}& \hskip5.3cm+ h b^t P_n \bullet U_{t_n} + h  b^t \Gamma_n \bullet Q_n + R_n^3, \label{eq:topourri}
\end{eqnarray}
where we have used \eqref{eq:systQn}.

Let us denote by $v_n= e_n + h( b^t \otimes L )(I-hA\otimes L)^{-1}e_n\mathds{1}$.
Assume that the spatial domain is $\Omega=\R^d$.
Observe that if $L=i\Delta$, then for all $\xi\in\R^d$,
$$ \widehat{v_n}(\xi) = R(-ih\xi^2)\widehat{e_n}(\xi),$$
and that if $L=\Delta$, then for all $\xi\in\R^d$,
$$ \widehat{v_n}(\xi) = R(-h\xi^2)\widehat{e_n}(\xi),$$
where $R$ is the linear stability function of the Runge-Kutta method defined in \eqref{eq:defR}.
Since the Runge-Kutta method is either $\hat{I}$-stable or $\hat{A}$-stable, it is either I-stable or A-stable and in both cases, hence $|R|\leq1$ in the appropriate region.
Similar estimates hold in the periodic case $\Omega=\T^d$.
Therefore,
$$\| v_n \|_{\Hs} \leq \| e_n \|_{\Hs}.$$
Since the Runge-Kutta method is either $\hat{I}$-stable or $\hat{A}$-stable, it is either IS-stable or AS-stable and in both cases, using \eqref{eq:topourri}, we infer with
propositions \ref{prop:ASstable} and \ref{prop:ISstable}, that
\begin{equation*}
\|e_{n+1}\|_{\Hs} \leq \|e_n\|_\Hs + Ch \|P_n\|_{\Hss,\infty} + Ch \|\Gamma_n\|_{\Hss,\infty} \|Q_n\|_{\Hss,\infty}+ C\|R_n^2\|_\Hss + \|R_n^3\|_\Hs , 
\end{equation*}
which gives with (H1)
\begin{equation*}
\|e_{n+1}\|_\Hs \leq \|e_n\|_\Hs + Ch \|P_n\|_{\Hss,\infty} + Ch(M+m) \|Q_n\|_{\Hss,\infty} + C\|R_n^2\|_\Hss + \|R_n^3\|_\Hs. 
\end{equation*}
Using \eqref{eq:recestQn}, and recalling that $h\in(0,1)$ so that $h^2\leq h\leq 1$, we have
\begin{equation}
\label{eq:recesten}
\|e_{n+1}\|_\Hs \leq (1+Ch)\|e_n\|_\Hs + Ch \|P_n\|_{\Hss,\infty}+ C \|R_n^2\|_\Hss+ \|R_n^3\|_\Hs. 
\end{equation}
From \eqref{eq:recestPn} we have by induction 
\begin{equation}
\label{eq:Pnderoule}
\|P_n\|_{\Hss,D }\leq \delta^{n+1} \|P_{-1}\|_{\Hss,D} + C \sum_{k=0}^n \delta^{n-k} \left(\|e_k\|_\Hs+ \|R_k^1\|_{\Hss,D}\right).
\end{equation}
Using the norm equivalence between $\| \cdot \|_{\Hss,\infty}$ and $\| \cdot \|_{\Hss,D}$, and \eqref{eq:Pnderoule} in \eqref{eq:recesten} we obtain
\begin{eqnarray}
\|e_{n+1}\|_\Hs &\leq& (1+Ch)\|e_n\|_\Hs + Ch \left[\delta^{n+1} \|P_{-1}\|_{\Hss,D} + C \sum_{k=0}^n \delta^{n-k} \left(\|e_k\|_\Hs+ \|R_k^1\|_{\Hss,D}\right)\right] \nonumber \\
&{}& \hskip3.5cm+ C\|R_n^2\|_\Hss+ \|R_n^3\|_\Hs. \label{eq:recesten2} 
\end{eqnarray}
With estimate \eqref{eq:estimRn23}, we infer that 
\begin{equation}
\label{eq:recesten3}
\|e_{n+1}\|_\Hs \leq (1+Ch)\|e_n\|_\Hs + Ch \left[\delta^{n+1} \|P_{-1}\|_{\Hss,D} + C \sum_{k=0}^n \delta^{n-k} \left(\|e_k\|_\Hs+ \|R_k^1\|_{\Hss,D}\right)\right]+ Ch^{s+1}.
\end{equation}
Using the maximal error defined previously and the fact that $\delta <1$ since the step \eqref{eq:heritagegamma} is strongly stable, we have, using also estimate \eqref{eq:estimRn1},
\begin{eqnarray*}
\|e_{n+1}\|_\Hs &\leq& (1+Ch)z_n + Ch \left[\delta^{n+1} \|P_{-1}\|_{\Hss,D} + \left(z_n+ h^s\right)\sum_{k=0}^n \delta^{n-k} \right]+ Ch^{s+1} \\
&\leq& (1+Ch)z_n + Ch \left[\delta^{n+1} \|P_{-1}\|_{\Hss,D} + \left(z_n+ h^s\right)\dfrac{1}{1-\delta} \right]+ Ch^{s+1} ,
\end{eqnarray*}
and then 
\begin{equation}
\label{eq:recesten4}
\|e_{n+1}\|_\Hs \leq (1+Ch)z_n + Ch \delta^{n+1} \|P_{-1}\|_{\Hss,D} + Ch^{s+1}.
\end{equation}
Using that $z_{n+1}=\max \{z_n, \|e_{n+1}\|_\Hs\}$, we infer
\begin{equation}
\label{eq:recesten5}
z_{n+1}\leq (1+Ch)z_n + Ch  \|P_{-1}\|_{\Hss,D} + Ch^{s+1}.
\end{equation}
By induction it follows that for all $n$ in $\N$ such that $t_{n}\leq T$,
\begin{eqnarray}
z_{n}&\leq& (1+Ch)^n z_0 + Ch  \left(\|P_{-1}\|_{\Hss,D} + h^{s}\right)\sum_{k=0}^{n-1} (1+Ch)^k  \nonumber \\
&\leq& e^{Cnh} z_0 + Ch  \left(\|P_{-1}\|_{\Hss,D} + h^{s}\right)\dfrac{(1+Ch)^{n}}{1+Ch-1} \nonumber \\
&\leq& e^{Cnh}\left( z_0 + C \left(\|P_{-1}\|_{\Hss,D} +h^{s}\right)\right)\nonumber \\
&\leq& e^{Cnh}\left( z_0 + C \left(\|P_{-1}\|_{\Hss,\infty} +h^{s}\right)\right). \label{eq:recestznfin}
\end{eqnarray}
Using \eqref{eq:Pnderoule} and the same estimations as above, we have moreover, since $\delta<1$,
\begin{equation}
\label{eq:Pnderoule2}
\|P_n\|_{\Hss,D} \leq \|P_{-1}\|_{\Hss,D} + C(z_n+h^s).
\end{equation}
We infer, using \eqref{eq:recestznfin},
\begin{equation}
\label{eq:Pnderoule3}
\|P_n\|_{\Hss,\infty} \leq Ce^{Cnh}\left(z_0+\|P_{-1}\|_{\Hss,\infty} +h^s\right).
\end{equation}

This shows that, as long as $(H1)$, $(H2)$ and $(H3)$ hold for $n$ such that $0\leq n\leq \nu$,
the estimates \eqref{eq:recestznfin} and \eqref{eq:Pnderoule3}) hold for
the convergence errors $(P_n)_{0\leq n\leq \nu}$ and $(z_n)_{0\leq n\leq \nu}$.
Observe that the constant $C>1$ above does not depend on $h$, neither does it depend
on $\nu$. However, it depends on the exact solution through the hypotheses stated before the theorem.
This concludes the first part of the proof.
\vskip0.2cm
\noindent{\bf Second part.} 
From now on, we denote by $C$ the maximum of the constants appearing in the right hand sides of \eqref{eq:recestznfin} and \eqref{eq:Pnderoule3}. Choose $h_0 \in (0,1)$ sufficiently small to have $h_0<\min\{-T_\star,T^\star\}$ and $Ce^{CT}h_0^s<r$ and $Ce^{CT}h_0^s<m$ and $h_0 C_1C_2(M+m) \leq 1/2$ where $C_1$ and $C_2$ were defined in the first part. Assume $u_0, \gamma_{-1+c_1}, \dots, \gamma_{-1+c_s} \in \Hs$ and $h \in (0,h_0)$ satisfy 
\begin{equation}
\label{eq:contrainteu0}
e^{CT}\left( \|u^0-u_0\|_{\Hs} + C \left(\max_{i \in \ldbrack 1,s \rdbrack} \|\gamma_{-1+c_i} - N(u(t_{-1}+c_ih))\|_{\Hs} +h_0^{s}\right)\right)<r,
\end{equation} 
and
\begin{equation}
\label{eq:contraintegammainit}
Ce^{CT}\left(\|u^0-u_0\|_{\Hs}+\max_{i \in \ldbrack 1,s \rdbrack} \|\gamma_{-1+c_i} - N(u(t_{-1}+c_ih))\|_{\Hs} +h_0^s\right)<m.
\end{equation} 

First, with \eqref{eq:Pnderoule3} (for $n=0$) and \eqref{eq:contraintegammainit}, we have 
$$\|P_{0}\|_{\Hss, \infty} = \max_{i \in \ldbrack 1,s \rdbrack} \|\gamma_{0+c_i} - N(u(t_{0}+c_ih))\|_{\Hs} < m.$$
Therefore by triangle inequality we have 
$$\|\Gamma_{0}\|_{\Hss,\infty} \leq \|P_{0}\|_{\Hss,\infty} + \left\| (N(u(t_{0}+c_ih)))_{1\leq i \leq s} \right\|_{\Hss,\infty}<M+m.$$
And then, the hypothesis (H1) of the first part is satisfied for $n=0$.

Moreover, with \eqref{eq:contrainteu0}, we have $\|u^0-u_0\|_\Hs \leq r$ so that $u_0 \in V$ and the hypothesis (H3) of the first part is satisfied with $n=0$. With proposition \ref{prop:RKwd}, we infer that the system \eqref{eq:systlinvect} has a unique solution in $\Hss$ since we assumed $h \leq h_0 < 1/(2C_1C_2(M+m))$. This implies that the hypothesis (H2) of the first part is satisfied for $n=0$. Then we can apply the analysis of the first part (with $\nu=0$) to obtain \eqref{eq:recestznfin} and \eqref{eq:Pnderoule3} with $n=1$. Using \eqref{eq:contrainteu0} and \eqref{eq:contraintegammainit}, we infer that hypotheses (H1), (H2) and (H3) are satisfied for all $n\leq \nu=1$, and the result follows by induction on $\nu$.



\section{Remark on the mass conservation for NLS equation}
\label{sec:massconservation}

Let us consider the NLS equation \eqref{eq:general} with $L=i\Delta$ and $N$ taking values in $i\R$, such that the Cauchy problem associated to \eqref{eq:general} is well-posed. In this case, the $L^2$-norm of the solution is preserved by the exact flow (at least in the case $\Omega=\R^d$ and $\Omega=\T^d$). Indeed, classically, one has along the exact solution $u$:
\begin{equation*}
\frac12\frac{\rm d}{{\rm d}t}\|u(t)\|^2=\Re\left(\int_\Omega \left(-i|\nabla u|^2 + N(u)|u|^2\right){\rm d}x {\rm d}y\right)=0.
\end{equation*}
For linearly implicit methods, it is possible to reproduce this property numerically, by adapting a classical condition for the preservation of quadratic invariants by Runge--Kutta methods
\cite{GNI2002}.

\begin{proposition}
\label{prop:Coopercondition}
  Assume $L=i\Delta$ and $N$ takes values in $i\R$. Assume the collocation Runge--Kutta method
  satisfies the Cooper condition
  (see equation (3.9) in Remark 14 of \cite{DL2020} or \eqref{eq:coopercondition} below).
  Assume the eigenvalues of $D$ are chosen so that $D$ is a real valued squared matrix
  and $\theta$ is a real valued vector.
  If $\gamma_{-1,1}, \cdots, \gamma_{-1,s}$ take values in $i\R$,
  then for all $u_0 \in L^2(\Omega)$ and for all $n \in \N$ such that
  $u_n$ is well-defined by the linearly implicit method, one has
\begin{equation*}
\|u_n\|^2=\|u_0\|^2.
\end{equation*} 
\end{proposition}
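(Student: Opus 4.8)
The plan is to adapt the classical argument showing that a Runge--Kutta method satisfying the Cooper (algebraic-stability) condition preserves quadratic invariants, and to verify that the extra variables $\gamma_{n,i}$ do not spoil this. Throughout I write $\langle f,g\rangle=\int_\Omega f\bar g\,\dd x$ for the $L^2$ Hermitian product and $\|\cdot\|$ for the associated norm; recall the Cooper condition is $b_ia_{ij}+b_ja_{ji}=b_ib_j$ for all $i,j$ (equivalently $BA+A^tB=bb^t$ with $B=\mathrm{diag}(b_1,\dots,b_s)$), and that the coefficients $a_{ij},b_i$ are real.

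First I would establish, by induction on $n\geq-1$, that each $\gamma_{n,i}$ is $i\R$-valued. The base case $n=-1$ is a hypothesis of the proposition. If the claim holds at step $n-1$, then \eqref{eq:heritagegamma} reads $\Gamma_n=D\Gamma_{n-1}+\Theta N(u_n)$, and since $D$ and $\Theta$ are real while $\Gamma_{n-1}$ has $i\R$-valued components and $N(u_n)$ is $i\R$-valued (because $N$ takes values in $i\R$), each component of $\Gamma_n$ is a real linear combination of $i\R$-valued functions, hence $i\R$-valued.

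Next, setting $k_i=(L+\gamma_{n,i})u_{n,i}$ so that \eqref{eq:systlin} reads $u_{n,i}=u_n+h\sum_j a_{ij}k_j$ and \eqref{eq:RKexpl} reads $u_{n+1}=u_n+h\sum_i b_ik_i$, I would expand $\|u_{n+1}\|^2$ and substitute $u_n=u_{n,i}-h\sum_j a_{ij}k_j$ in the two cross terms (valid since the coefficients are real); the same rearrangement as in the classical proof gives
\[
\|u_{n+1}\|^2-\|u_n\|^2=2h\sum_{i=1}^s b_i\,\Re\langle k_i,u_{n,i}\rangle-h^2\sum_{i,j=1}^s\bigl(b_ia_{ij}+b_ja_{ji}-b_ib_j\bigr)\langle k_i,k_j\rangle.
\]
The $h^2$ sum vanishes by the Cooper condition. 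In the first sum, $\langle k_i,u_{n,i}\rangle=\langle Lu_{n,i},u_{n,i}\rangle+\int_\Omega\gamma_{n,i}|u_{n,i}|^2\,\dd x$; with $L=i\Delta$, integration by parts on $\Omega=\R^d$ or $\T^d$ gives $\langle Lu_{n,i},u_{n,i}\rangle=-i\|\nabla u_{n,i}\|^2\in i\R$, while $\int_\Omega\gamma_{n,i}|u_{n,i}|^2\,\dd x\in i\R$ by the previous step. Hence $\Re\langle k_i,u_{n,i}\rangle=0$ for each $i$, so $\|u_{n+1}\|^2=\|u_n\|^2$, and an immediate induction on $n$ yields $\|u_n\|^2=\|u_0\|^2$ for every $n$ for which $u_n$ is well-defined.

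The algebra is routine; the one point needing care is regularity, since for $u_0\in L^2$ the stages $u_{n,i}$ and the increments $k_i$ must live in a space where $Lu_{n,i}$ is defined and the integration by parts holds. This is automatic in the $H^\sigma$ framework used elsewhere in the paper ($\sigma>d/2$), and can also be seen in Fourier variables, where $\langle Lu_{n,i},u_{n,i}\rangle=\sum_k(-i|k|^2)\,|\widehat{u_{n,i}}(k)|^2$ is manifestly purely imaginary; I would simply record this and treat ``$u_n$ well-defined'' as including that the stages lie in such a space.
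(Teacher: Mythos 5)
Your proposal is correct and follows essentially the same route as the paper: expand $\|u_{n+1}\|^2$, substitute the stage equation into the cross terms so that the Cooper condition cancels the $h^2$ contribution, show by induction that the $\gamma_{n,i}$ remain $i\R$-valued (using that $D$ and $\Theta$ are real and $N$ is $i\R$-valued), and conclude via the skew-adjointness of $L=i\Delta$ that the remaining $O(h)$ term is purely imaginary. The paper writes out the cross terms $I_1,I_2,I_3$ explicitly rather than using the compact $k_i$ notation, but the argument is identical.
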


\begin{proof}
First of all, let us recall de Cooper condition for a Runge--Kutta collocation method:
\begin{equation}
\label{eq:coopercondition}
\forall 1 \leq i,j \leq s, \qquad b_ib_j-b_ia_{ij}-b_ja_{ji}=0.
\end{equation}
Let $n$ be given as in the hypothesis. We have, using \eqref{eq:RKexpl}:
\begin{eqnarray}
\int_\Omega |u_{n+1}|^2&=&\int_\Omega u_{n+1} \overline{u_{n+1}} \nonumber\\ 
&=&\int_\Omega \left[\left(u_n+h\sum_{i=1}^s b_i(L+\gamma_{n+c_i})u_{n,i}\right)\overline{\left(u_n+h\sum_{i=1}^s b_i(L+\gamma_{n+c_i})u_{n,i}\right)}\right]\nonumber\\
&=&\int_\Omega |u_n|^2+h\int_\Omega I_1+h \int_\Omega I_2+h^2\int_\Omega I_3, \label{eq:calculmasse}
\end{eqnarray}
where, since $b_i\in\R$,
\begin{eqnarray*}
I_1&=&\sum_{i=1}^s b_i \overline{(L+\gamma_{n+c_i})u_{n,i}}~u_n, \\
I_2&=&\sum_{i=1}^s b_i (L+\gamma_{n+c_i})u_{n,i}\overline{u_n}, \\
I_3&=&\sum_{i,j=1}^s b_ib_j (L+\gamma_{n+c_i})u_{n,i} 
\overline{(L+\gamma_{n+c_j})u_{n,j}}.
\end{eqnarray*}
Using \eqref{eq:systlin}, we can write:
\begin{eqnarray*}
I_1&=&\sum_{i=1}^s b_i \left(\overline{(L+\gamma_{n+c_i}})\overline{u_{n,i}}\right)u_{n,i}-h\sum_{i=1}^s b_i \left(\overline{(L+\gamma_{n+c_i}})\overline{u_{n,i}}\right)\left( \sum_{j=1}^s a_{i,j}(L+\gamma_{n+c_j})u_{n,j}\right),\\
&=&\sum_{i=1}^s b_i \left(\overline{(L+\gamma_{n+c_i}})\overline{u_{n,i}}\right)u_{n,i}-h\sum_{j=1}^s b_j \left(\overline{(L+\gamma_{n+c_j}})\overline{u_{n,j}}\right)\left( \sum_{i=1}^s a_{j,i}(L+\gamma_{n+c_i})u_{n,i}\right),\\
&=&\sum_{i=1}^s b_i \left(\overline{(L+\gamma_{n+c_i}})\overline{u_{n,i}}\right)u_{n,i}-h\sum_{j=1}^s \left(\sum_{i=1}^s b_ja_{j,i}(L+\gamma_{n+c_i})u_{n,i}\overline{(L+\gamma_{n+c_j}})\overline{u_{n,j}}\right).
\end{eqnarray*}
A similar computation leads to
\begin{eqnarray*}
I_2&=&\sum_{i=1}^s b_i \left((L+\gamma_{n+c_i})u_{n,i}\right)\overline{u_{n,i}}-h\sum_{j=1}^s \left(\sum_{i=1}^s b_ia_{i,j}(L+\gamma_{n+c_i})u_{n,i}\overline{(L+\gamma_{n+c_j}})\overline{u_{n,j}}\right).
\end{eqnarray*}
Using these two equalities in \eqref{eq:calculmasse}, we obtain
\begin{eqnarray*}
\int_\Omega |u_{n+1}|^2&=&\int_\Omega |u_n|^2 +h \sum_{i=1}^s b_i \int_\Omega \left[\overline{(L+\gamma_{n+c_i})u_{n,i}}~u_{n,i}+(L+\gamma_{n+c_i})u_{n,i}\overline{u_{n,i}}\right]\\
&{}&+h^2 \sum_{j=1}^s \sum_{i=1}^s \left(b_ib_j-b_ia_{i,j}-b_ja_{j,i}\right) \int_\Omega (L+\gamma_{n+c_i})u_{n,i}\overline{(L+\gamma_{n+c_j})u_{n,j}}.
\end{eqnarray*}
With the Cooper condition \eqref{eq:coopercondition} in the last term, this gives
\begin{equation}
\label{eq:calculmasse2}
\int_\Omega |u_{n+1}|^2=\int_\Omega |u_n|^2 +2h \sum_{i=1}^s b_i {\rm Re}\left(\int_\Omega (L+\gamma_{n+c_i})u_{n,i}\overline{u_{n,i}}\right).
\end{equation}
Observe that, since the $(\gamma_{-1+c_i})_{1\leq i\leq s}$ are purely imaginary-valued initially,
$D$ is a real-valued matrix, and $N$ takes values in $i\R$, the step \eqref{eq:heritagegamma}
ensures by induction that $(\gamma_{n+c_i})_{1\leq i\leq s}$ are purely imaginary-valued.
Using that and the fact that, for the Schrödinger equation, the operator $L=i\Delta$ is skew-symmetric,
we infer that the last term in \eqref{eq:calculmasse2} is equal to zero. This implies
$$\int_\Omega |u_{n+1}|^2=\int_\Omega |u_n|^2.$$
This concludes the proof. 
\end{proof}


\section{Numerical experiments}
\label{sec:num}

This section is devoted to numerical experiments illustrating the main theorem of the paper
(Theorem \ref{th:convergence}).
We also demonstrate numerically the necessity of the stability conditions of Section
\ref{subsec:analysisofthequasiRKstep} for the convergence result to hold.
Moreover, we compare the precision and efficiency of the linearly implicit methods
analysed in this paper with that of classical methods from the litterature.
We consider NLS equations in 1D (Section \ref{subsec:numNLS1D}) and 2D (Section \ref{subsec:numNLS2D}),
as well as NLH equations in 1D (Section \ref{subsec:numNLH1D}). A more extensive numerical comparison of linearly implicit methods for several semilinear evolution PDEs will be the object of a forthcoming paper.

\subsection{Note on the initialization of the linearly implicit methods}
\label{subsec:initialization}

Initializing a linearly implicit method \eqref{eq:heritagegamma}, \eqref{eq:systlin}, \eqref{eq:RKexpl} requires not only an approximation $u_0$ of $u^0$ but also $s$ approximations $\gamma_{-1,1}, \cdots, \gamma_{-1,s}$ of $N(u(t_{-1+c_1})), \cdots, N(u(t_{-1+c_s}))$. To do so, in the numerical experiments below, we use appropriate methods that ensure that the corresponding terms in the right hand-side of \eqref{eq:estimerreur} has order $s$. This is easy to do for NLS equations since they are reversible with respect to time. For non reversible equations like NLH equations, we may compute forward approximations of $N(u(t_{0+c_1})), \cdots, N(u(t_{0+c_s}))$ and $u(t_1)$, by standard methods of sufficient orders and use these values as initial data for the linearly implicit method after a shift of $h$ in time.

\subsection{Numerical experiments in 1 dimension for the NLS equation}
\label{subsec:numNLS1D}

We perform numerical experiments in dimension 1 ($\Omega=\R$), on the soliton solution
\begin{equation}
  \label{eq:solitonNLS}
  u(t,x)=\sqrt{\frac{2\alpha}{q}} {\rm sech}\left(\sqrt{\alpha}(x-x_0-c t)\right) {\rm e}^{i(\alpha+\frac{c^2}{4})t} {\rm e}^{i c (x-x_0-c t)/2)},
\end{equation}
to the NLS equation \eqref{eq:general} with $L=i\partial_x^2$ and $N(u)=iq|u|^2$ for some $q \in \R\setminus \{0\}$, $x_0$ real, $c$ real, $\alpha>0$.

We initialize the methods by a projection of this exact solution on the space grid for $u^0$ and by the image by $N$ of the projection of this exact solution on the space grid for $\gamma_{-1,1}, \cdots, \gamma_{-1,s}$. The final error is computed comparing to the projection on the space grid of the exact solution at final time. 

\subsubsection{Methods of order 2} \label{subsubsec:methodesordre21D}
The linearly implicit method with the underlying Runge--Kutta method of Example \ref{ex:meth2etagesLobatto} and $\pm \frac12$ as eignevalues of $D$,
applied to a nonlinear Schr\"odin\-ger equation with exact solution
\eqref{eq:solitonNLS} with $q=4$, $\alpha=1$,
$c=0$ and $x_0=0$, has been implemented in \cite{DL2020}. The computational domain was truncated to $(-50,50)$ with homogeneous
Dirichlet boundary conditions and discretized with $2^{18}$ points in space.
The final computational time was $T=5$. Observe that this method is $\hat A$-stable (see Example \ref{ex:meth2etagesLobatto}).

We refer the reader to Figure 4 in \cite{DL2020} for the numerical comparison with other convergent methods of order 2 from the liltterature
(the Crank-Nicolson method and the Strang splitting method). The left panel shows the order 2 of each method and in particular illustrates Theorem \ref{th:convergence} for the linearly implicit method. Moreover,  the right panel shows the efficiency of the methods in terms
of CPU time as a function of the error: The numerical cost for a given error is a bit higher for the linearly implicit method of order 2 than for the Strang splitting method and it is much lower than for the Crank--Nicolson method. 

To complete the illustration of Theorem \ref{th:convergence} for methods of order 2,
we investigate the relevance of the hypothesis of $\hat{A}$-stability for the underlying Runge--Kutta method.
To this end, we implement the linearly implicit method relying on the Runge--Kutta method described in Example \ref{ex:notAstable} with $\pm \frac12$ as eigenvalues of $D$
on the same soliton test case.
This underlying Runge--Kutta method is not $A$-stable (and even not $I$-stable) and hence not $\hat{A}$-stable.
We emphasize the fact that this linearly implicit method is of order 2
when applied to an ODE thanks to Theorem 9 in \cite{DL2020}.
However, Figure \ref{fig:ordre2_NLS1D_pasAstable} shows that this method fails to converge
for the soliton case.
This demonstrates the relevance of the hypothesis of $A$-stability (or at least $I$-stability)
for the numerical solution of nonlinear Schr\"odinger equations using linearly
implicit methods (this is in fact similar to the use of classical Runge--Kutta methods,
where $A$-stability is required as well to ensure convergence).

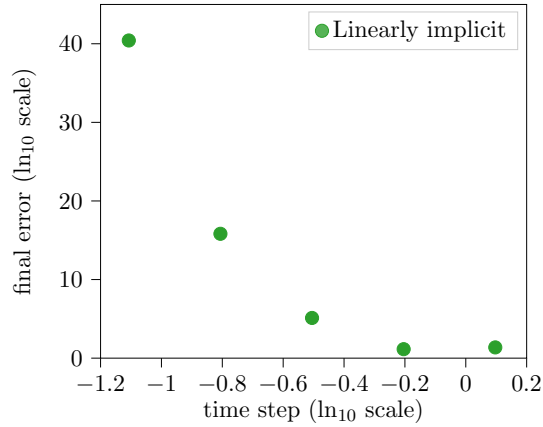
\begin{figure}[!h]
\centering
\resizebox{0.48\textwidth}{!}{
%
%
\definecolor{mycolor1}{rgb}{0.00000,0.44700,0.74100}%
\definecolor{color1}{rgb}{1,0.498039215686275,0.0549019607843137}
\definecolor{color0}{rgb}{0.172549019607843,0.627450980392157,0.172549019607843}
\begin{tikzpicture}

\begin{axis}[%
legend style={fill opacity=0.8, draw opacity=1, text opacity=1, draw=white!80!black},
tick align=outside,
tick pos=left,
xmin=-1.2,
xmax=0.2,
xlabel={time step ($\ln_{10}$ scale)},
xtick style={color=black},
x grid style={white!69.0196078431373!black},
ymin=0,
ymax=45,
y grid style={white!69.0196078431373!black},
ytick style={color=black},
ylabel={final error ($\ln_{10}$ scale)},
axis background/.style={fill=white},
]
\addplot [color=color0, draw=none, mark=*, mark size=3, mark options={solid}, only marks]
  table[row sep=crcr]{%
0.0969100130080564	1.36688700917022\\
-0.204119982655925	1.14218479154859\\
-0.505149978319906	5.11523985017124\\
-0.806179973983887	15.8212845020538\\
-1.10720996964787	40.4170392109337\\
};
\addlegendentry{Linearly implicit}

\end{axis}
\end{tikzpicture}
\caption{Numerical error as a function of the time step for the soliton test case with the linearly implicit method based on the Runge--Kutta collocation method of Example \ref{ex:notAstable} which is not $\hat{A}$-stable (since not $A$-stable) with $\lambda_1=1/2, \lambda_2=-1/2$.}
\label{fig:ordre2_NLS1D_pasAstable}
\end{figure}

\subsubsection{Methods of order 4}
\label{subsubsec:methodesordre41D}

For the numerical methods of order 4, we use the parameters $q=8$, $x_0=0$, $c=0.5$, $\alpha=4$
in the nonlinear Schr\"odinger equation and its exact solution \eqref{eq:solitonNLS} above.
The computational domain is truncated to $(-62.5;62.5)$ and we set homogeneous Dirichlet boundary conditions. We use finite differences with $2^{20}$ points in space for each method. We denote by $\mathcal{L}$ the classical homogeneous Dirichlet Laplacian matrix times the purely imaginary unit. We perform the simulation until $T=5$.

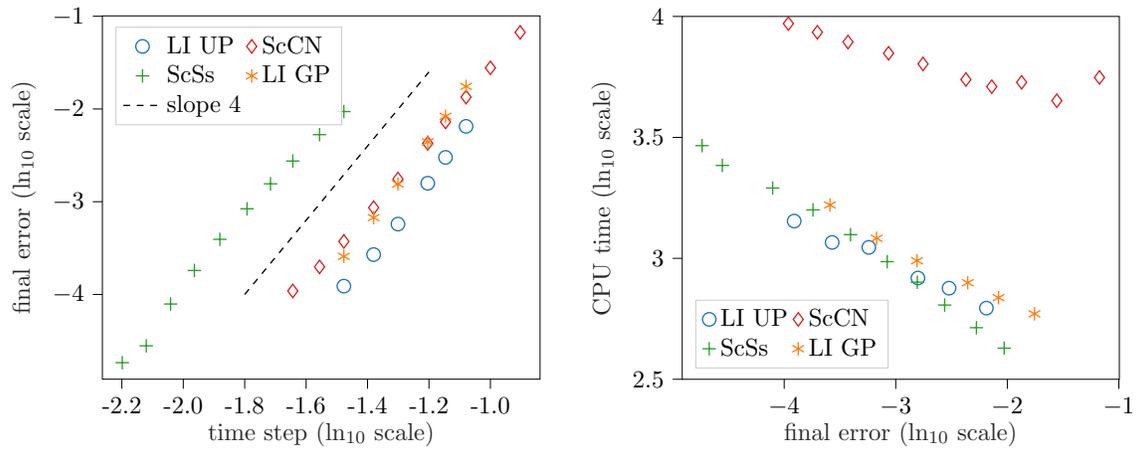
\begin{figure}[!h]
\centering
\begin{tabular}{cc}
\resizebox{0.48\textwidth}{!}{
\begin{tikzpicture}

\definecolor{color0}{rgb}{0.12156862745098,0.466666666666667,0.705882352941177}
\definecolor{color1}{rgb}{1,0.498039215686275,0.0549019607843137}
\definecolor{color2}{rgb}{0.172549019607843,0.627450980392157,0.172549019607843}
\definecolor{color3}{rgb}{0.83921568627451,0.152941176470588,0.156862745098039}
\definecolor{color4}{rgb}{0.580392156862745,0.403921568627451,0.741176470588235}
\definecolor{color5}{rgb}{0.549019607843137,0.337254901960784,0.294117647058824}

\begin{axis}[
legend cell align={left},
legend columns=2,
legend style={
  fill opacity=0.8,
  draw opacity=1,
  text opacity=1,
  at={(0.03,0.97)},
  anchor=north west,
  draw=white!80!black
},
tick align=outside,
tick pos=left,
x grid style={white!69.0196078431373!black},
xmin=-2.26343544195255, xmax=-0.83831163199382,
xlabel style={font=\color{white!15!black}},
xlabel={time step ($\ln_{10}$ scale)},
xtick style={color=black},
xtick={-2.4,-2.2,-2,-1.8,-1.6,-1.4,-1.2,-1,-0.8},
xticklabels={-2.4,-2.2,-2.0,-1.8,-1.6,-1.4,-1.2,-1.0,-0.8},
y grid style={white!69.0196078431373!black},
ymin=-4.9139019215706, ymax=-0.99587929877438,
ylabel style={font=\color{white!15!black}},
ylabel={final error ($\ln_{10}$ scale)},
ytick style={color=black}
]
\addplot [semithick, color0, mark=o, mark size=3, mark options={solid}, only marks]
table {%
-1.07918124604762 -2.18741930066588
-1.14612803567824 -2.52223857704097
-1.20411998265592 -2.80100179070028
-1.30102999566398 -3.2413790309208
-1.38021124171161 -3.57008569088546
-1.47712125471966 -3.91049376851996
};
\addlegendentry{LI UP}
\addplot [semithick, color3, mark=diamond, mark size=3, mark options={solid}, only marks]
table {%
-0.903089986991944 -1.17397123617421
-1 -1.55633961886013
-1.07918124604762 -1.87183394397232
-1.14612803567824 -2.13934516023454
-1.20411998265592 -2.37088736545236
-1.30102999566398 -2.75540685098658
-1.38021124171161 -3.06432268093115
-1.47712125471966 -3.42812964000138
-1.55630250076729 -3.70261881259125
-1.64345267648619 -3.96240071427495
};
\addlegendentry{ScCN}
\addplot [semithick, color2, mark=+, mark size=3, mark options={solid}, only marks]
table {%
-1.47712125471966 -2.02782640400113
-1.55630250076729 -2.27692563877014
-1.64345267648619 -2.56137078912138
-1.7160033436348 -2.8071452738166
-1.79239168949825 -3.07627509506283
-1.88081359228079 -3.40542056861662
-1.96378782734556 -3.74190167158023
-2.04139268515822 -4.10325964163883
-2.12057393120585 -4.55423562275362
-2.19865708695442 -4.73580998417077
};
\addlegendentry{ScSs}
\addplot [semithick, color1, mark=asterisk, mark size=3, only marks]
table {%
-1.07918124604762 -1.75655104501726
-1.14612803567824 -2.07876798202274
-1.20411998265592 -2.35482482325706
-1.30102999566398 -2.8095153177063
-1.38021124171161 -3.17165218166161
-1.47712125471966 -3.58933299237216
};
\addlegendentry{LI GP}

\addplot [semithick, color=black, dashed]
table {%
-1.8 -4
-1.2 -1.6
};
\addlegendentry{slope 4}
\end{axis}

\end{tikzpicture} } & 
\resizebox{0.485\textwidth}{!}{
\begin{tikzpicture}

\definecolor{color0}{rgb}{0.12156862745098,0.466666666666667,0.705882352941177}
\definecolor{color1}{rgb}{1,0.498039215686275,0.0549019607843137}
\definecolor{color2}{rgb}{0.172549019607843,0.627450980392157,0.172549019607843}
\definecolor{color3}{rgb}{0.83921568627451,0.152941176470588,0.156862745098039}
\definecolor{color4}{rgb}{0.580392156862745,0.403921568627451,0.741176470588235}
\definecolor{color5}{rgb}{0.549019607843137,0.337254901960784,0.294117647058824}

\begin{axis}[
legend cell align={left},
legend columns=2,
legend style={fill opacity=0.8, draw opacity=1, text opacity=1, at={(0.03,0.03)},
  anchor=south west, draw=white!80!black},
tick align=outside,
tick pos=left,
x grid style={white!69.0196078431373!black},
xmin=-4.9139019215706, xmax=-0.99587929877438,
xlabel style={font=\color{white!15!black}},
xlabel={final error ($\ln_{10}$ scale)},
xtick style={color=black},
y grid style={white!69.0196078431373!black},
ymin=2.5, ymax=4,
ylabel style={font=\color{white!15!black}},
ylabel={CPU time ($\ln_{10}$ scale)},
ytick style={color=black}
]
\addplot [semithick, color0, mark=o, mark size=3, mark options={solid}, only marks]
table {%
-2.18741930066588 2.79423555
-2.52223857704097 2.87664791
-2.80100179070028 2.91851035
-3.2413790309208 3.04532893
-3.57008569088546 3.06581846
-3.91049376851996 3.15399199
};
\addlegendentry{LI UP}
\addplot [semithick, color3, mark=diamond, mark size=3, mark options={solid}, only marks]
table {%
-1.17397123617421 3.74774197
-1.55633961886013 3.6517426
-1.87183394397232 3.72772441
-2.13934516023454 3.70963488
-2.37088736545236 3.73920667
-2.75540685098658 3.80360092
-3.06432268093115 3.84761221
-3.42812964000138 3.89417619
-3.70261881259125 3.93436757
-3.96240071427495 3.97042711
};
\addlegendentry{ScCN}
\addplot [semithick, color2, mark=+, mark size=3, mark options={solid}, only marks]
table {%
-2.02782640400113 2.62860381
-2.27692563877014 2.71326179
-2.56137078912138 2.80678094
-2.8071452738166 2.90122686
-3.07627509506283 2.98623153
-3.40542056861662 3.0979312
-3.74190167158023 3.20013923
-4.10325964163883 3.29060311
-4.55423562275362 3.38413264
-4.73580998417077 3.46589349
};
\addlegendentry{ScSs}
\addplot [semithick, color1, mark=asterisk, mark size=3, mark options={solid}, only marks]
table {%
-1.75655104501726 2.77057035
-2.07876798202274 2.83726019
-2.35482482325706 2.89909976
-2.8095153177063 2.99024117
-3.17165218166161 3.08271639
-3.58933299237216 3.22062785
};
\addlegendentry{LI GP}
\end{axis}

\end{tikzpicture}}
\end{tabular}
\caption{Comparison of methods of order 4 applied to the NLS equation.
  On the left panel, maximal numerical error as a function of the time step (logarithmic scales); on the right panel, CPU time (in seconds) as a function of the final numerical error (logarithmic scales). }
\label{fig:ordre4_NLS1D}
\end{figure}

We compare four numerical methods of order 4 on the 1D soliton case mentioned above :
\begin{itemize}
\item the linearly implicit method of order 4 (LI UP) defined by
$(c_1,c_2,c_3,c_4)=(0,\frac13,\frac23,1)$ and
$(\lambda_1,\lambda_2,\lambda_3,\lambda_4)=(\frac{i}{2},-\frac{i}{2},\frac{i}{4},-\frac{i}{4})$.
The corresponding coefficients $(a_{i,j})_{1\leq i,j,\leq 4}$
and 
$(b_i)_{1,\leq i\leq 4}$ are computed in Example \ref{ex:4etages}. The underlying Runge--Kutta method is $\hat{A}$-stable. 

\item the linearly implicit method of order 4 (LI GP) with Gauss points defined by
$$(c_1,c_2,c_3,c_4)=\left(\frac12-\frac{\alpha_1}{70},\frac12-\frac{\alpha_2}{70}, \frac12+\frac{\alpha_2}{70},\frac12+\frac{\alpha_1}{70}\right),$$
where $\alpha_1=\sqrt{525+70\sqrt{30}}$ and $\alpha_2=\sqrt{525-70\sqrt{30}}$ and $(\lambda_1,\lambda_2,\lambda_3,\lambda_4)=(-\frac14,\frac14,-\frac12,\frac12)$. The underlying Runge--Kutta method is $\hat A$-stable (see \cite{DL2023RK}).

\item the Suzuki composition (see \eqref{Suzuki}) of the Crank-Nicolson method (ScCN) where the Crank-Nicolson method $\Phi_h: u_n \mapsto u_{n+1}$ is given by:
\begin{equation}
\label{Crank}
\dfrac{u_{n+1}-u_n}{h}=\left(\mathcal{L}+\dfrac{N(u_{n+1})+N(u_n)}{2}\right)\dfrac{u_{n+1}+u_n}{2},
\end{equation}
\item the Suzuki composition (see \eqref{Suzuki}) of the Strang splitting method (ScSs) where the Strang splitting method $\Phi_h: u_n \mapsto u_{n+1}$ is given by:
\begin{equation}
\label{Strang}
\left\{
\begin{array}{ll}
u_{n,1}=\exp\left(hN(u_n)/2\right)u_n, \\[2mm]
\left(1-\dfrac{h}{2}\mathcal{L}\right)u_{n,2}=\left(1+\dfrac{h}{2}\mathcal{L}\right)u_{n,1},\\[2mm]
u_{n+1}=\exp\left(hN(u_{n,2})/2\right)u_{n,2} .
\end{array}
\right.
\end{equation}
\end{itemize}
The Suzuki composition method that we use is given by the following numerical flow:
\begin{equation}
\label{Suzuki}
\Phi_{\alpha_3h} \circ \Phi_{\alpha_2h} \circ \Phi_{\alpha_1h},
\end{equation}
with $\alpha_1=\alpha_3=\dfrac{1}{2-\sqrt[3]{2}}$ and $\alpha_2=1-2\alpha_1$.
Since the methods $\Phi_h$ are symmetric and of order 2,
their composition \eqref{Suzuki} above is symmetric of order 4.

The results of Figure \ref{fig:ordre4_NLS1D} indicate that the four methods above have order 4 in this soliton case (left panel). This illustrates Theorem \ref{th:convergence} for the linearly implicit methods. Moreover, the two linearly implicit methods of order 4 outperform the ScCN method in terms of efficiency (computational time required for a given error) and so does the ScSs method (right panel). Indeed, the two linearly implicit methods and ScSs, as implemented above, require to solve a linear system at each time step and they display similar efficiencies within the final error range used for this simulation.

\subsubsection{A method of order 5}
\label{subsubsec:methodesordre51D}
For the illustration of the importance of the $ASI$-stability of the underlying collocation Runge--Kutta method hypothesis,
we implement the $5$-stages linearly implicit method defined in Example \ref{ex:5etagesAASpasASI}. For that linearly implicit method the underlying Runge--Kutta method is
$A$-stable, $AS$-stable, but not $ASI$-stable. We use it to solve numerically the
nonlocal nonlinear evolution PDE
\begin{equation}
  \label{eq:labelleequationdeGuillaume}
  \partial_t u = i\partial_x^2 u + u \star u \star u,
\end{equation}
with periodic boundary conditions on the torus $\R/(2\pi\Z)$ and where $\star$ denotes the convolution product.
To understand the Cauchy theory for equation \eqref{eq:labelleequationdeGuillaume},
note that the evolution of the Fourier coefficient $c_k(u)$ of order $k\in\Z$ is such that
\begin{equation}
  \label{eq:uk2}
  \left(c_k(u(t,\cdot)\right)^2 = \frac{(c_k(u_0))^2}{\left(1-i\frac{(c_k(u_0))^2}{k^2}\right)
  {\rm e}^{2ik^2 t}+i\frac{(c_k(u_0))^2}{k^2}},
\end{equation}
where $u_0$ is the initial datum associated to \eqref{eq:labelleequationdeGuillaume} at $t=0$.
Assuming that $u_0\in L^2(\R/(2\pi\Z))$ is real-valued and even,
we have that for all $k\in\Z$, $c_k(u_0)\in\R$
and taking appropriate square roots in \eqref{eq:uk2}, and forming the corresponding Fourier
series, we obtain a solution to \eqref{eq:labelleequationdeGuillaume} for all $t\in\R$.
We plot in Figure \ref{fig:ordre5_NLSbizarre_pasASIstable}
the discrete $L^2$-norm of the difference at times close to $T=1.0$ between
the exact solution of \eqref{eq:labelleequationdeGuillaume} defined using \eqref{eq:uk2}
and the numerical solution obtained by the non-ASI method described in Example \ref{ex:5etagesAASpasASI}.
We take $N=30$ Fourier modes (from $k=0$ to $k=29$) and $c_k(u_0)={\rm e}^{-|k|}$.
We chose time steps between $1.0/2^3$ and $1.0/2^8$
and we also consider time steps satisfying $1-hk^2\alpha=0$,
where $\alpha=3\sqrt{7}/56$ (see the analysis of this method carried out in \cite{DL2023RK}). For these time steps, the matrix
$(I-hA\otimes L)$ (for diagonal $L$ with entries $-ik^2$) is not invertible.
We observe in Figure \ref{fig:ordre5_NLSbizarre_pasASIstable} that the method displays a convergence of order 5 for regular time steps,
which is destroyed for resonant time steps of the form $1.0/(\alpha k^2)$.
This illustrates the importance of the hypothesis of $ASI$-stability (included in $\hat{A}$-stability)
in Theorem \ref{th:convergence} for the convergence of linearly implicit methods
applied to a nonlinear Schr\"odinger evolution PDE.

\begin{figure}[!h]
\centering
\resizebox{0.48\textwidth}{!}{
\begin{tikzpicture}

\definecolor{color0}{rgb}{0.172549019607843,0.627450980392157,0.172549019607843}
\definecolor{color1}{rgb}{1,0.498039215686275,0.0549019607843137}

\begin{axis}[
legend cell align={left},
legend style={fill opacity=0.8, draw opacity=1, text opacity=1, draw=white!80!black},
tick align=outside,
tick pos=left,
x grid style={white!69.0196078431373!black},
xmin=-2.16741596878066, xmax=-0.842883987859147,
xtick style={color=black},
xtick={-2.2,-2,-1.8,-1.6,-1.4,-1.2,-1,-0.8},
xticklabels={--2.2,--2.0,--1.8,--1.6,--1.4,--1.2,--1.0,-0.8},
xlabel={log10 of the time step},
y grid style={white!69.0196078431373!black},
ymin=-7.49494046646252, ymax=18.0661236476368,
ytick style={color=black},
ylabel={log10 of the error},
]
\addplot [semithick, color0, mark=*, mark size=3, mark options={solid}, only marks]
table {%
-0.903089986991944 -0.335164541895539
-1.20411998265592 -1.82975212802136
-1.50514997831991 -3.34186964805822
-1.80617997398389 -4.84288251999412
-2.10720996964787 -6.33307391582164
};
\addlegendentry{regular steps}
\addplot [semithick, color0, dashed]
table {%
-0.903089986991944 0.484550065040282
-1.20411998265592 -1.02059991327962
-1.50514997831991 -2.52574989159953
-1.80617997398389 -4.03089986991944
-2.10720996964787 -5.53604984823934
};
\addlegendentry{slope 5}
\addplot [semithick, color1, mark=asterisk, mark size=3, mark options={solid}, only marks]
table {%
-0.957662221704478 3.31345883955977
-1.05996726659924 3.56305618175698
-1.15148224772059 4.11438775152909
-1.23426761803704 4.91811112192611
-1.30984473981584 5.59971861801783
-1.37936895233426 6.00494398799987
-1.44373831907707 8.02967651778697
-1.50366476583195 6.86244750880081
-1.55972221303244 7.37927394069075
-1.61238009047714 10.854584198146
-1.6620272579272 9.21190007457814
-1.70898944962625 11.187381059741
-1.75354223904855 12.1791730413851
-1.79592083718843 12.9568380430226
-1.836327609365 13.357188568854
-1.87493791975578 14.2451986333748
-1.9119047311438 14.6999564301633
-1.94736226506467 15.389594933047
-1.98142894366223 15.9051330424637
-2.01420977603857 16.3513696710061
-2.04579831040503 16.9042570969959
};
\addlegendentry{resonant steps}
\end{axis}

\end{tikzpicture}}
\caption{Numerical $L^2$-error at time $T=1.0$ as a function of the time step for
  the solution to Equation \eqref{eq:labelleequationdeGuillaume} with initial datum given in the
  text through its Fourier coefficients,
  using the non-$ASI$ linearly implicit method of order 5 described in Example \ref{ex:5etagesAASpasASI}.
  We plot regular time steps of the form $1.0/2^p$ for $p\in\{3,\cdots,8\}$ (green dots),
  a regular line of slope 5 (dashed green line),
  and resonant time steps of size $1.0/(\alpha k^2)$ for $1\leq k\leq 30$.}
\label{fig:ordre5_NLSbizarre_pasASIstable}
\end{figure}
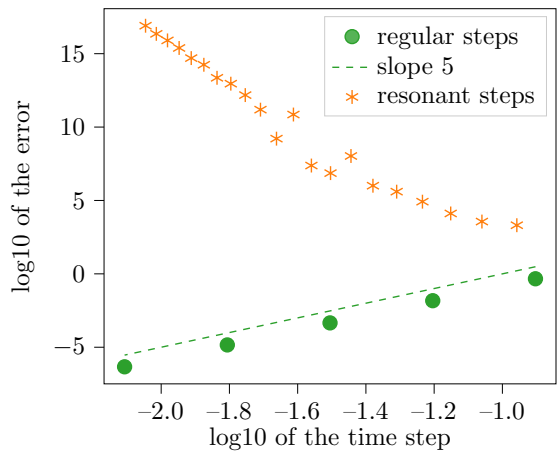

\begin{remark}
The numerical examples above (in Figure \ref{fig:ordre2_NLS1D_pasAstable} of a method which is not $A$-stable
(see Section \ref{subsubsec:methodesordre21D}) and in Figure \ref{fig:ordre5_NLSbizarre_pasASIstable} of a method which is $A$, $AS$ and not $ASI$ stable (see Section \ref{subsubsec:methodesordre51D})), show that linearly implicit methods may fail to converge for evolution PDE problems, even if they would converge for evolution ODE problems. Moreover, these two numerical examples demonstrate the relevance of the concepts of $A$-stability and $ASI$-stability in the hypotheses of Theorem \ref{th:convergence}. 
\end{remark}

\subsection{Numerical experiments in 2 dimensions for the NLS equation}
\label{subsec:numNLS2D}

\subsubsection{Space discretization}
We consider the focusing NLS equation
\begin{equation}
  \label{eq:NLSstarshaped}
  i \partial_t u = -\Delta u - q|u|^2 u,
\end{equation}
on a star-shaped domain $\mathcal S\subset \R^2$ with homogeneous Dirichlet boundary conditions,
with $q\in\R$. The energy associated to \eqref{eq:NLSstarshaped} reads
\begin{equation*}
  E(u) = \frac12 \int_{\mathcal S} |\nabla u|^2{\rm d}x {\rm d}y
  - \frac{q}{4} \int_{\mathcal S} |u|^4 {\rm d}x {\rm d}y.
\end{equation*}
The domain $\mathcal S$ consists in the union of the interior of the triangle with vertices
\begin{equation*}
  \left(R\sin\left(2k\frac{\pi}{3}\right),R\cos\left(2k\frac{\pi}{3}\right)\right),
  \qquad
  0\leq k \leq 2,
\end{equation*}
and that of the triangle with vertices
\begin{equation*}
  \left(R\sin\left((2k+1)\frac{\pi}{3}\right),R\cos\left((2k+1)\frac{\pi}{3}\right)\right),
  \qquad
  0\leq k \leq 2,
\end{equation*}
for some $R>0$.

The discretization of $\mathcal S$ is carried out using {\tt GMSH} by generating an admissible
(in the sense of finite volumes) triangular mesh of $\mathcal S$.
The triangles are denoted $({\mathcal T}_j)_{1\leq j\leq J}$ for some $J\geq 1$.
The set of the corresponding edges is denoted by $\mathcal E$, and is partitioned in the
set $\mathcal E^{\rm int}$ of interior edges (belonging to 2 triangles)
and the set $\mathcal E^{\rm ext}$ of exterior edges (belonging to 1 triangle).
For all triangle $\mathcal T_j$, we denote by $\mathcal E_j\subset\mathcal E$ the set
of its 3 edges.
Denoting by $U\in\C^J$ a finite volume approximation of a complex-valued function $u$
over $\mathcal S$, we define a $J\times J$ matrix $L$ by setting for all $k\in\{1,\cdots,J\}$
\begin{equation*}
  (LU)_k = \frac{i}{m\left(\mathcal T_k\right)}\left(\sum_{\sigma\in \mathcal E_k\cap\mathcal E^{\rm int}} \frac{U_j-U_k}{d_{\sigma}} \ell_{\sigma}
  + \sum_{\sigma \in\mathcal E_k\cap\mathcal E^{\rm ext}} \frac{0-U_k}{d_{\sigma}} \ell_{\sigma}\right),
\end{equation*}
where
\begin{itemize}
\item in the first sum, for $\sigma\in \mathcal E_k\cap\mathcal E^{\rm int}$, the letter
  $j$ denotes {\it the} integer in $\{1,\cdots,J\}\setminus\{k\}$
  such that $\sigma$ is both an edge of
  $\mathcal T_j$ and $\mathcal T_k$, $d_\sigma$ is the distance between the centers of mass
  of $\mathcal T_j$ and $\mathcal T_k$ and $\ell_\sigma$ is the length of the edge $\sigma$;
\item in the second sum, for $\sigma\in \mathcal E_k\cap\mathcal E^{\rm ext}$,
      $d_\sigma$ denotes the distance between the center of mass of $\mathcal T_k$ and
      the edge $\sigma$, and $\ell_\sigma$ still denotes the length of the edge $\sigma$;
\item the area of the triangle $\mathcal T_k$ is denoted by $m\left(\mathcal T_k\right)$. In our case all the triangles share the same area $m(\mathcal{T})$. 
\end{itemize}
This way, $LU$ is a finite volume approximation of $i\Delta u$. It is easy to check
that the matrix $L$ is skew-symmetric, with spectrum included in $i\R^-$.
From now on, we consider the following semidiscretization of \eqref{eq:NLSstarshaped}:
\begin{equation}
  \label{eq:NLSstarshapeddiscresp}
  U'(t)=LU(t)+N(U)\ast U,
\end{equation}
where $N(U)\in\R^J$ has component $j$ equal to $iq|U_j|^2$ and $\ast$ stands for the componentwise
multiplication in $\C^J$.

\subsubsection{Comparison of methods of order 2 on the star shaped domain}
We consider the case $q=1$, $R=1$ with the initial datum $U_0$ as the interpolation of 
\begin{equation*}
u_0(x,y)=\sqrt{\dfrac{98}{\pi}}\exp\left(-49(x^2+y^2)\right)\exp\left(-20i x\right),
\end{equation*}
at the centers of the triangles. We set $T=0.1$ as the final time. and we consider.
An example of the dynamics is presented in Figure \ref{fig:ordre2_NLS2D_dyna}
with $J=161472$ triangles.

Note that, since the spectrum of $L$ is not explicitly known, the computation of
exponentials involving $L$ requires approximations. Depending on where these approximations
are used 
(in splitting methods, in {\it e.g.} Lawson methods, or in exponential Runge--Kutta methods),
the order of approximation has to  be chosen carefully.
Moreover, when $J$ is big, computing these approximations can be costly.

\begin{figure}[!h]
\centering
\begin{tabular}{ccc}
	\resizebox{0.30\textwidth}{!}{\includegraphics{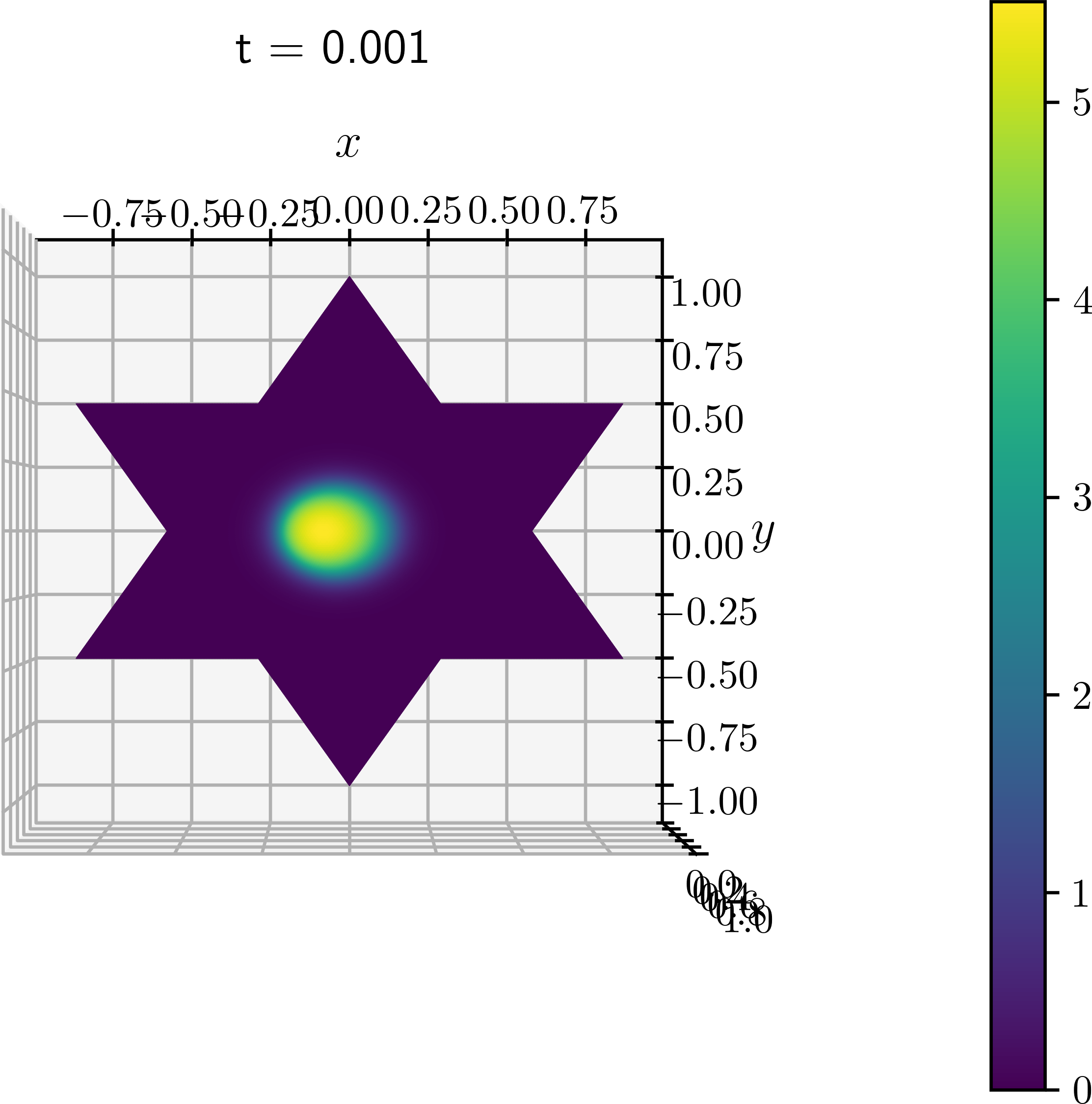}} & 
    \resizebox{0.30\textwidth}{!}{\includegraphics{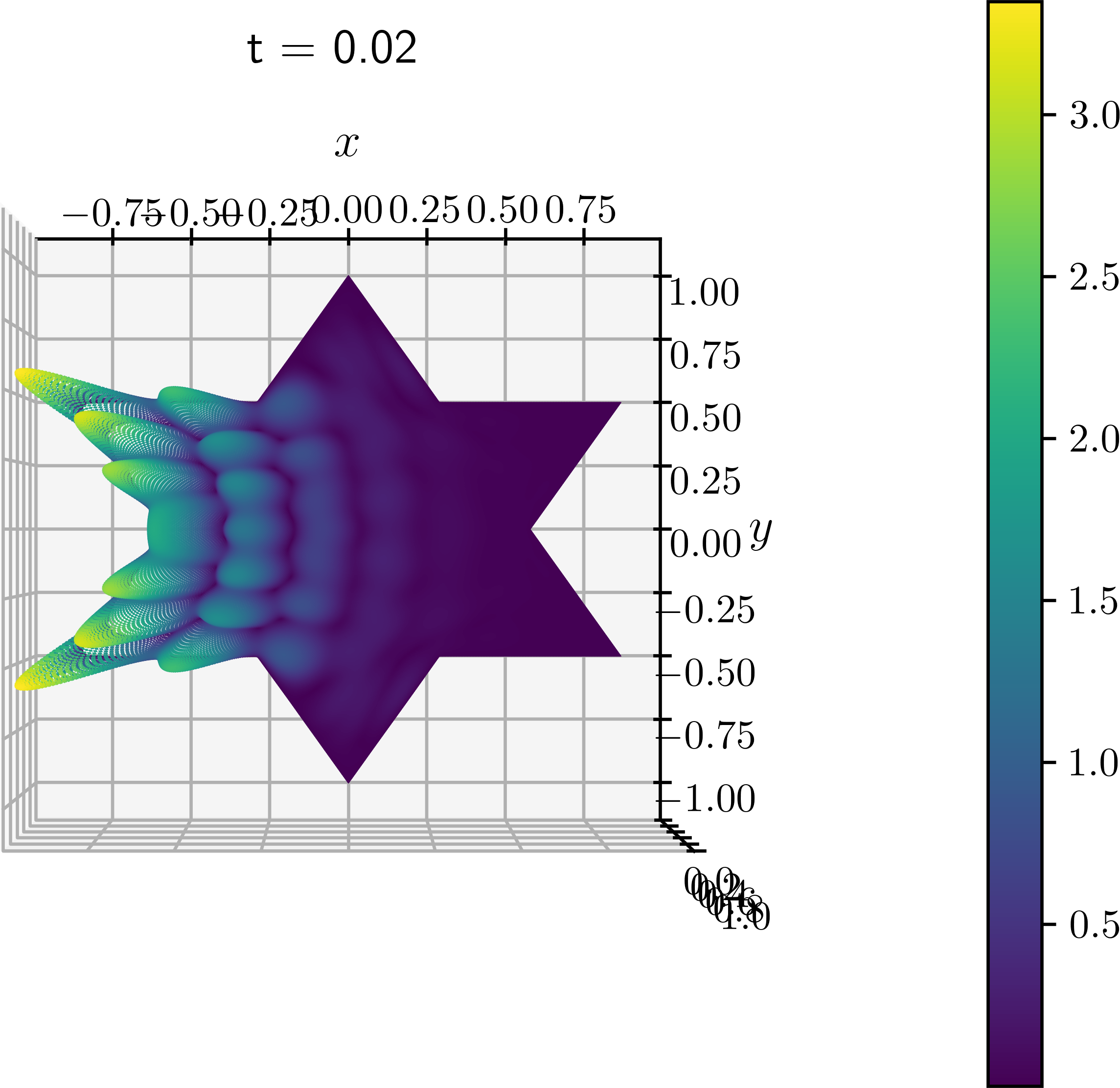}}& 
    \resizebox{0.30\textwidth}{!}{\includegraphics{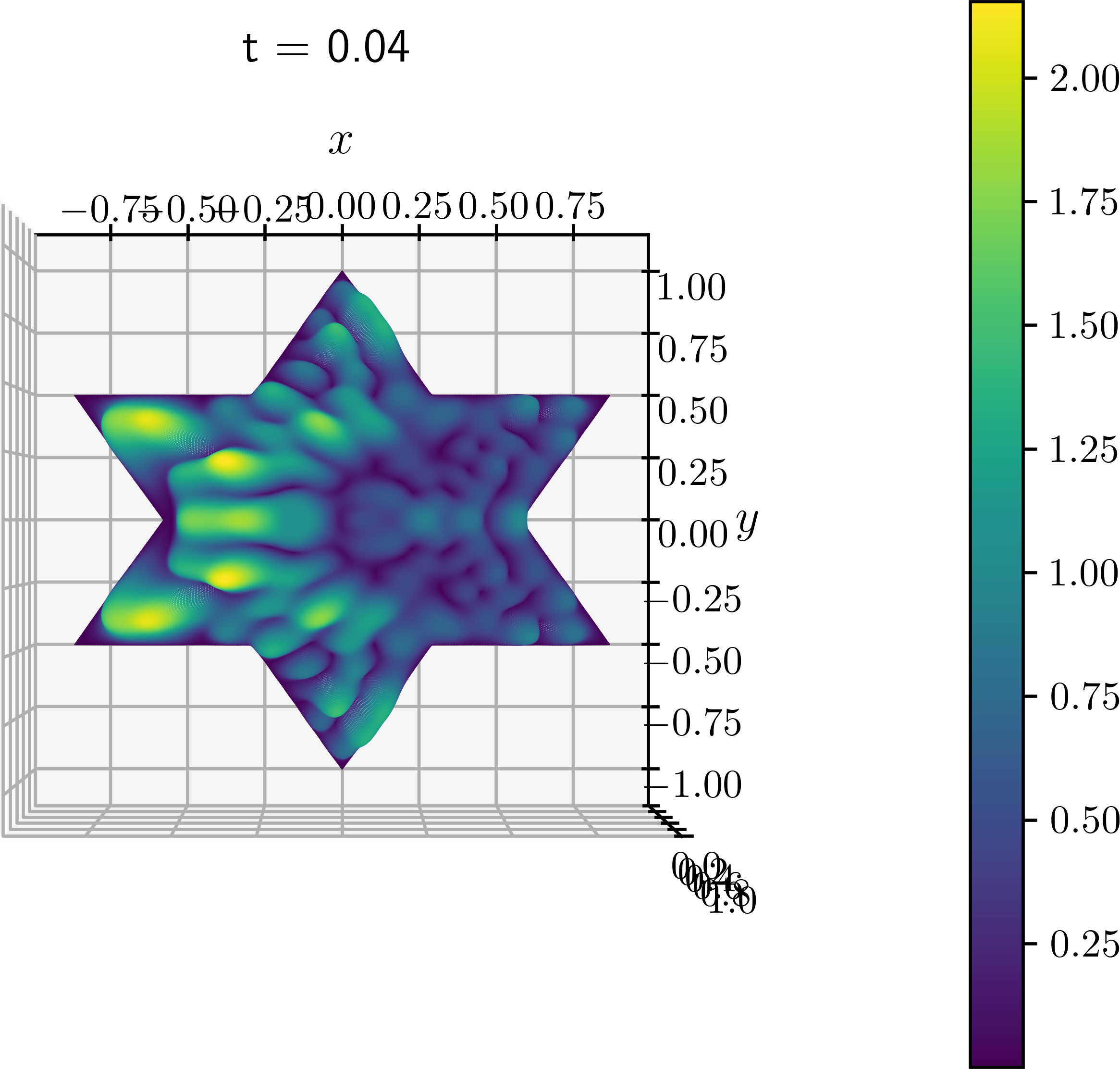}}
    \\
    \resizebox{0.30\textwidth}{!}{\includegraphics{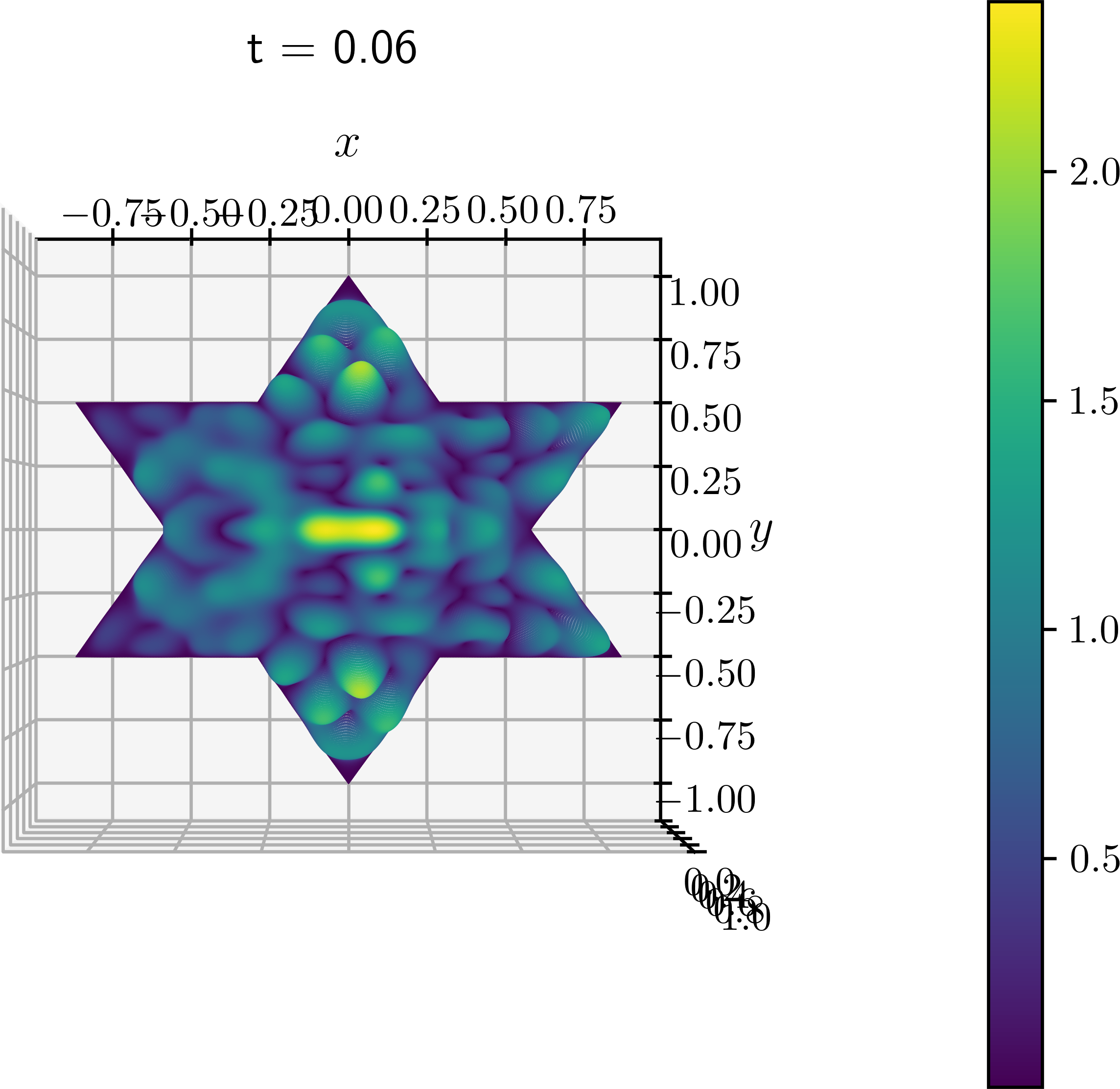}}&
    \resizebox{0.30\textwidth}{!}{\includegraphics{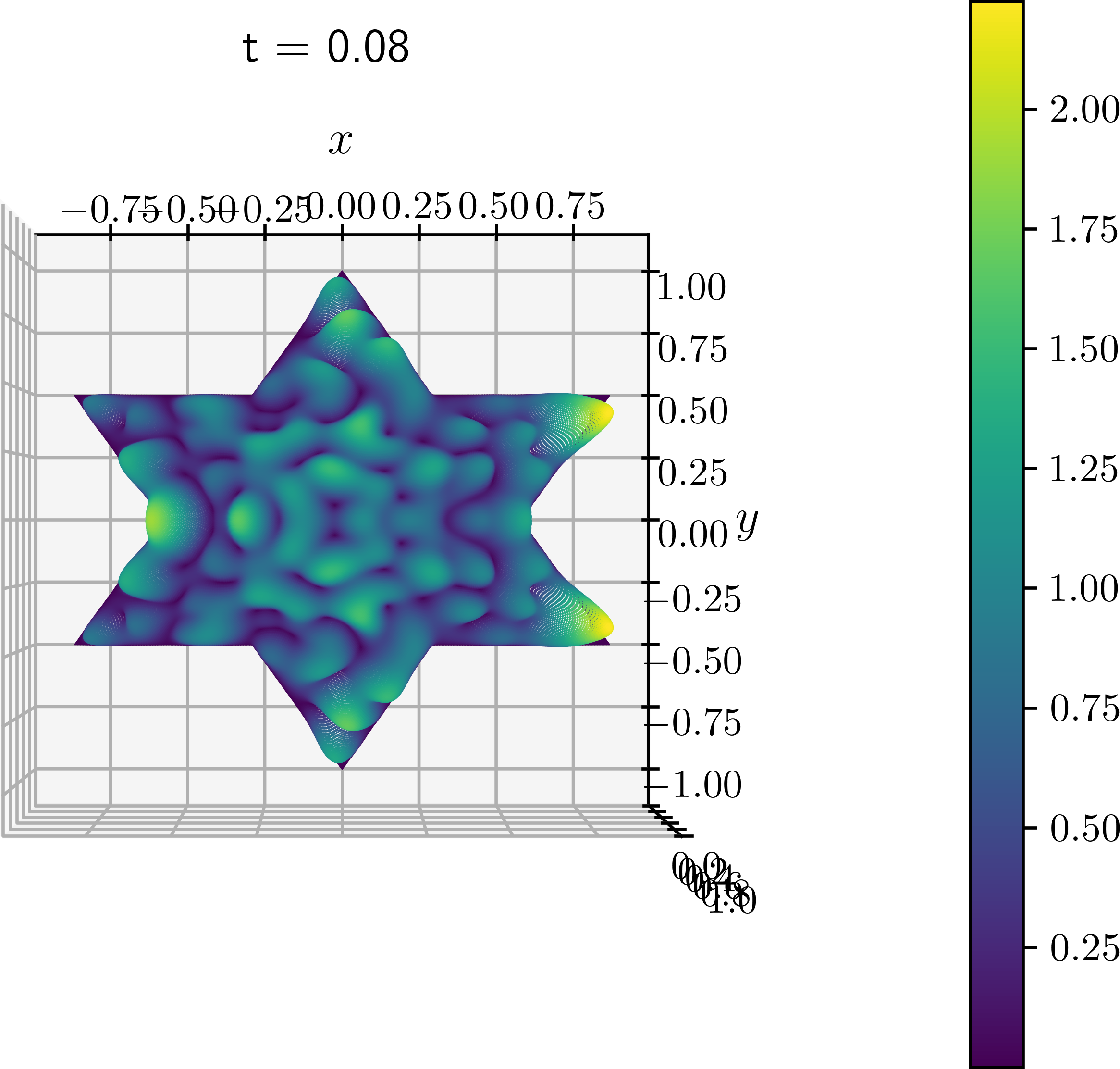}}&
    \resizebox{0.30\textwidth}{!}{\includegraphics{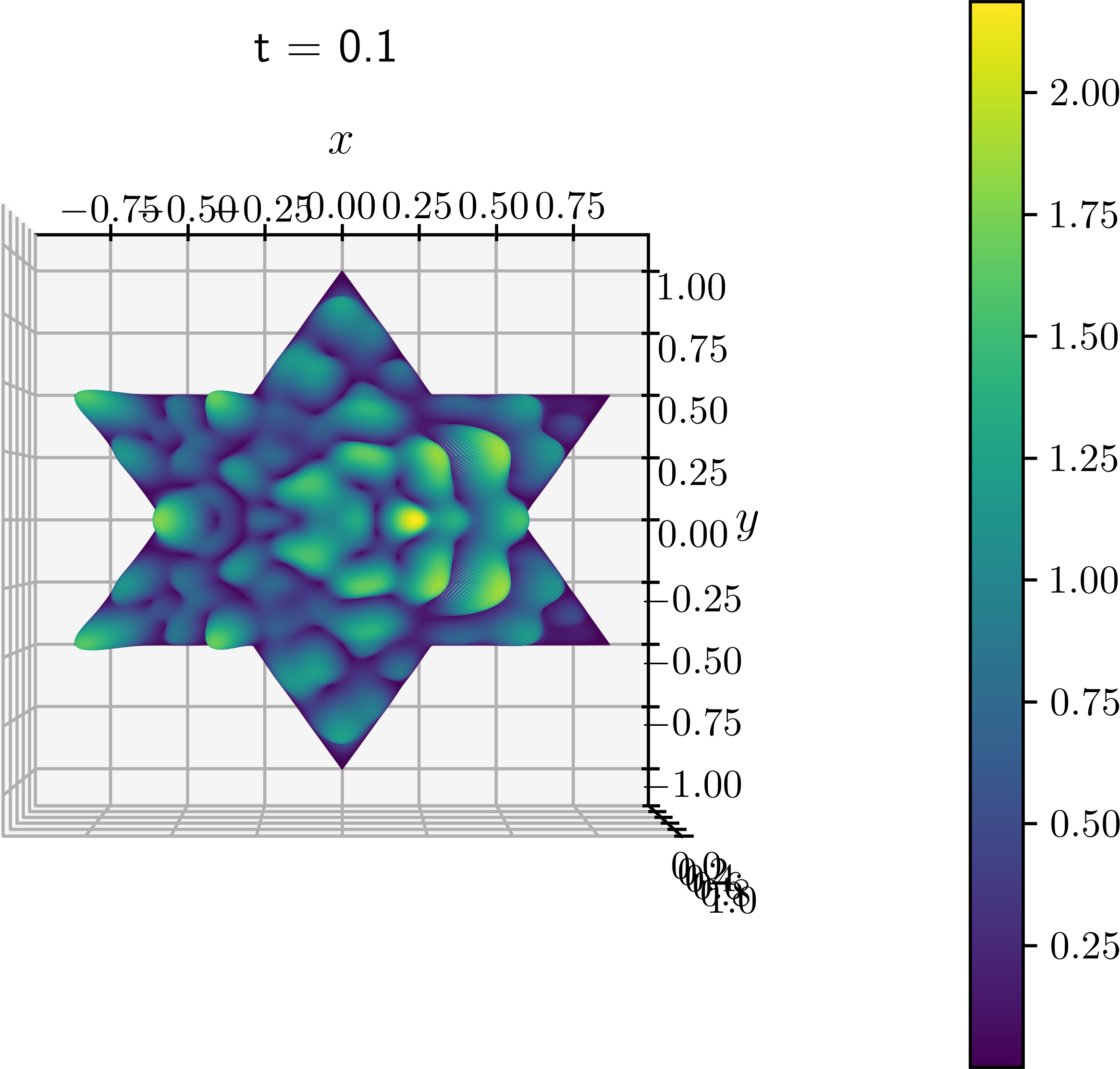}}
\end{tabular}
\caption{The dynamics of the test case of the NLS equation \eqref{eq:NLSstarshaped} on a star-shaped domain.}
\label{fig:ordre2_NLS2D_dyna}
\end{figure}

We present in Figure \ref{fig:ordre2_NLS2D} a numerical comparison with $J=10\ 092$ of methods expected to be of order 2 for this problem. We show at the same time the final error as a function of the time step (left panel) and the CPU time as a function of the final error (right panel), both in log scales. The four methods we consider are
\begin{itemize}
	\item the Strang splitting method with $(Id+hL/2)(Id-hL/2)^{-1}$ as an approximation of $e^{hL}$ in a classical and efficient implementation involving only the Lie splitting method,
	\item the Crank--Nicolson method (C-N),
	\item the linearly implicit method with uniform points (LI UP) based on the Runge--Kutta method of example \ref{ex:meth2etagesLobatto} with $\lambda_1=-\lambda_2=1/2$,
	\item the linearly implicit method with Gauss points (LI GP) based on the Runge--Kutta method of example \ref{ex:meth2etagesGauss} with $\lambda_1=-\lambda_2=1/2$.
\end{itemize}
We implement the four methods in {\tt Python} and use the {\tt spsolve} function of {\tt scipy.sparse.linalg} to compute the solutions to all the linear systems. 
We use a reference solution computed with a method of order 5 with a very small time step to compute the final errors. The three first methods have numerical order 2 as predicted by Theorem \ref{th:convergence}. The fourth one seems to have order 3 and hence to behave even better than predicted by Theorem \ref{th:convergence}. In terms of efficiency, the linearly implicit method with Gauss points outperforms even the explicit Strang splitting method describe above. However, the Strang splitting method is faster than the linearly implicit method with uniform points.

\begin{figure}[!h]
\centering
\begin{tabular}{cc}
\resizebox{0.48\textwidth}{!}{
\begin{tikzpicture}

\definecolor{color0}{rgb}{0.12156862745098,0.466666666666667,0.705882352941177}
\definecolor{color1}{rgb}{1,0.498039215686275,0.0549019607843137}
\definecolor{color2}{rgb}{0.172549019607843,0.627450980392157,0.172549019607843}
\definecolor{color3}{rgb}{0.83921568627451,0.152941176470588,0.156862745098039}
\definecolor{color4}{rgb}{0.580392156862745,0.403921568627451,0.741176470588235}
\definecolor{color5}{rgb}{0.549019607843137,0.337254901960784,0.294117647058824}

\begin{axis}[
legend cell align={left},
legend columns=3,
legend style={
  fill opacity=0.8,
  draw opacity=1,
  text opacity=1,
  at={(0.03,0.03)},
  anchor=south west,
  draw=white!80!black
},
tick align=outside,
tick pos=left,
x grid style={white!69.0196078431373!black},
xlabel={time step \(\displaystyle h\) (\(\displaystyle \ln_{10}\) scale)},
xmin=-4.2, xmax=-2.6,
xtick style={color=black},
y grid style={white!69.0196078431373!black},
ylabel={final error (\(\displaystyle \ln_{10}\) scale)},
ymin=-2.2, ymax=-0.4,
ytick style={color=black}
]
\addplot [semithick, color0, mark=o, mark size=3, mark options={solid,fill opacity=0}, only marks]
table {%
-3.60205999132796 -0.604258919855249
-3.69897000433602 -0.773342076438952
-3.77815125038364 -0.920163990532986
-3.84509804001426 -1.04826687394716
-3.90308998699194 -1.16111447046157
-3.95424250943932 -1.26158227378437
-4 -1.35194696879474
};
\addlegendentry{LI UP}
\addplot [semithick, color1, mark=asterisk, mark size=3, mark options={solid,fill opacity=0}, only marks]
table {%
-2.77815125038364 -0.432374825439581
-2.90308998699194 -0.697469675660379
-3 -0.957834750757471
-3.07918124604762 -1.20471388710688
-3.14612803567824 -1.43169941693913
-3.20411998265592 -1.63589718503871
};
\addlegendentry{LI GP}
\addplot [semithick, color2, mark=+, mark size=3, mark options={solid,fill opacity=0}, only marks]
table {%
-3.60205999132796 -0.602682478363296
-3.69897000433602 -0.771714954274549
-3.77815125038364 -0.918494406421278
-3.84509804001426 -1.04656435623373
-3.90308998699194 -1.1593869910545
-3.95424250943932 -1.25983583537982
-4 -1.35018662112496
};
\addlegendentry{Strang}
\addplot [semithick, color3, mark=diamond, mark size=3, mark options={solid,fill opacity=0}, only marks]
table {%
-3.60205999132796 -0.604927112182141
-3.69897000433602 -0.77407514460598
-3.77815125038364 -0.920949197918535
-3.84509804001426 -1.04909108991825
-3.90308998699194 -1.16196753265367
-3.95424250943932 -1.26245649841949
-4 -1.35283682032462
};
\addlegendentry{C-N}
\addplot [semithick, color4, dashed]
table {%
-3.60205999132796 -0.804119982655925
-3.69897000433602 -0.997940008672037
-3.77815125038364 -1.15630250076729
-3.84509804001426 -1.29019608002851
-3.90308998699194 -1.40617997398389
-3.95424250943932 -1.50848501887865
-4 -1.6
};
\addlegendentry{slope 2}
\addplot [semithick, color5, dashed]
table {%
-2.87815125038364 -0.434453751150931
-3.00308998699194 -0.809269960975833
-3.1 -1.1
-3.17918124604762 -1.33754373814287
-3.24612803567824 -1.53838410703472
-3.30411998265592 -1.71235994796777
};
\addlegendentry{slope 3}
\end{axis}

\end{tikzpicture}
  } & 
      \resizebox{0.47\textwidth}{!}{
\begin{tikzpicture}

\definecolor{color0}{rgb}{0.12156862745098,0.466666666666667,0.705882352941177}
\definecolor{color1}{rgb}{1,0.498039215686275,0.0549019607843137}
\definecolor{color2}{rgb}{0.172549019607843,0.627450980392157,0.172549019607843}
\definecolor{color3}{rgb}{0.83921568627451,0.152941176470588,0.156862745098039}

\begin{axis}[
legend cell align={left},
legend columns=2,
legend style={
  fill opacity=0.8,
  draw opacity=1,
  text opacity=1,
  at={(0.03,0.03)},
  anchor=south west,
  draw=white!80!black
},
tick align=outside,
tick pos=left,
x grid style={white!69.0196078431373!black},
xlabel={final error (\(\displaystyle \ln_{10}\) scale)},
xmin=-1.69607330301867, xmax=-0.372198707459625,
xtick style={color=black},
y grid style={white!69.0196078431373!black},
ylabel={CPU time (\(\displaystyle \ln_{10}\) scale)},
ymin=0.5, ymax=2.7,
ytick style={color=black}
]
\addplot [semithick, color0, mark=o, mark size=4, mark options={solid,fill opacity=0}, only marks]
table {%
-0.604258919855249 1.59579515940592
-0.773342076438952 1.70192053499757
-0.920163990532986 1.79180014314963
-1.04826687394716 1.84929494296289
-1.16111447046157 1.91364569488819
-1.26158227378437 1.96659727148672
-1.35194696879474 2.03656406216087
};
\addlegendentry{LI UP}
\addplot [semithick, color1, mark=asterisk, mark size=4, mark options={solid,fill opacity=0}, only marks]
table {%
-0.432374825439581 1.2138944400939
-0.697469675660379 1.34116884834393
-0.957834750757471 1.43776976865051
-1.20471388710688 1.52407671871099
-1.43169941693913 1.57282452428519
-1.63589718503871 1.63070942840983
};
\addlegendentry{LI GP}
\addplot [semithick, color2, mark=+, mark size=4, mark options={solid,fill opacity=0}, only marks]
table {%
-0.602682478363296 1.40159168179196
-0.771714954274549 1.4856378849348
-0.918494406421278 1.56554522673983
-1.04656435623373 1.64684545106518
-1.1593869910545 1.70945509152791
-1.25983583537982 1.75666669994016
-1.35018662112496 1.80269927657342
};
\addlegendentry{Strang}
\addplot [semithick, color3, mark=diamond, mark size=4, mark options={solid,fill opacity=0}, only marks]
table {%
-0.604927112182141 2.11189950484408
-0.77407514460598 2.20100482654455
-0.920949197918535 2.27311787883894
-1.04909108991825 2.34077588260497
-1.16196753265367 2.41420133242413
-1.26245649841949 2.45802974700726
-1.35283682032462 2.56599461204042
};
\addlegendentry{C-N}
\end{axis}

\end{tikzpicture}
      }
\end{tabular}
\caption{Comparison for $J=10\ 092$ of methods of order 2 applied to the NLS equation on a star-shaped domain.
  On the left panel, final numerical error as a function of the time step (logarithmic scales);
  on the right panel, CPU time (in seconds) as a function of the final numerical error
  (logarithmic scales). }
\label{fig:ordre2_NLS2D}
\end{figure}
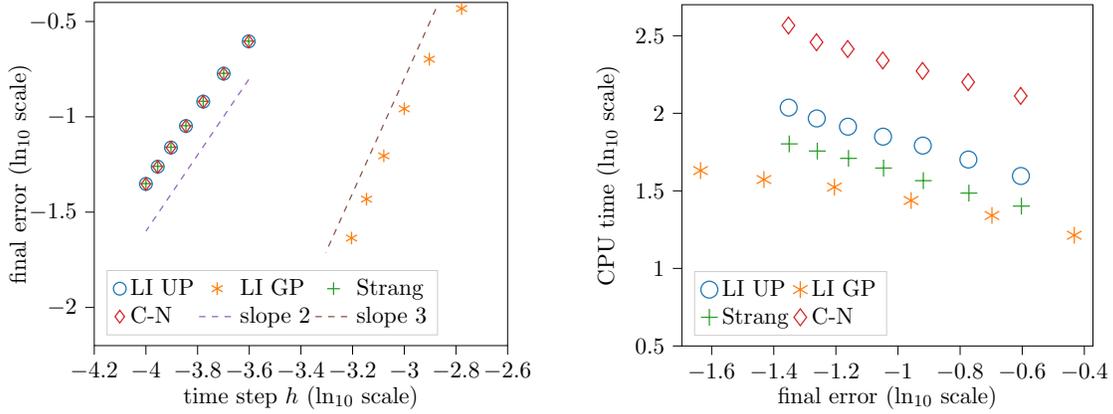

We compare in Figure \ref{fig:mass_energy_2_NLS2D} the preservation property of the mass (squared $L^2$-norm) and energy 
\begin{equation}
\label{eq:energyNLS}
E(U)=\dfrac{m(\mathcal{T})}{2}U^\star LU-q\dfrac{m(\mathcal{T})}{4} \sum_{k=1}^J|U_k|^4,
\end{equation} 
where $U^\star$ is the conjugate transpose of $U$. As expected, the Crank-Nicolson method and Strang splitting method preserve the mass. So does the linearly implicit method with Gauss points since it satisfies the Cooper condition \eqref{eq:coopercondition}. This is coherent with Proposition \ref{prop:Coopercondition}. The linearly implicit method with uniform points does not preserve mass as it does not satisfy the Cooper condition. Among the four methods of order 2, the Crank--Nicolson method is the only one preserving the energy.

\begin{figure}[!h]
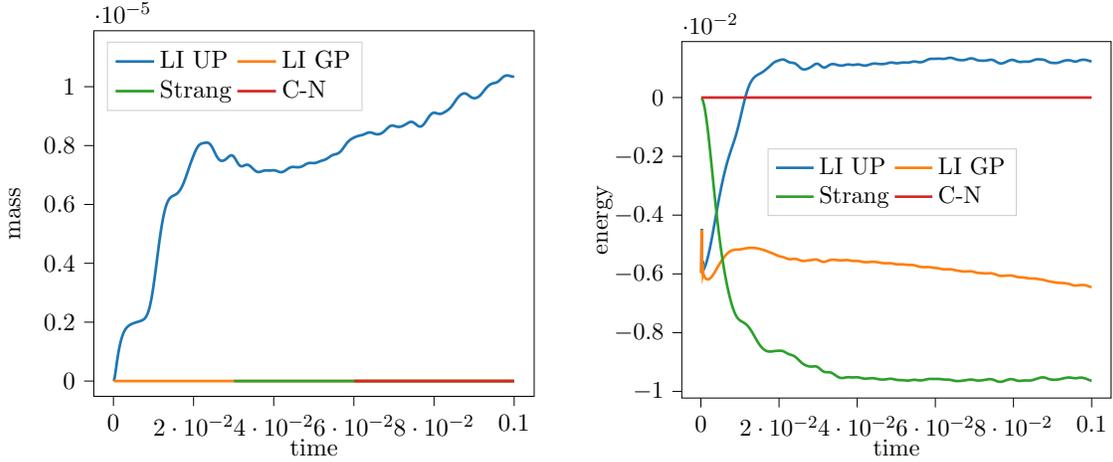

\centering
\begin{tabular}{cc}
\resizebox{0.48\textwidth}{!}{
  \input{L2carre2etages-NT=1000}
  } & 
      \resizebox{0.47\textwidth}{!}{
      \input{NRJ2etages-NT=1000}
      }
\end{tabular}
\caption{Deviation of the mass (left panel) and energy (right panel) from their initial values by the four methods of order 2 for $J=10\ 092$ and $h=10^{-4}$}
\label{fig:mass_energy_2_NLS2D}
\end{figure}

\subsection{Numerical experiments in 1 dimension for the NLH equation}
\label{subsec:numNLH1D}

The comparaison of methods of order 1 has been carried out in \cite{DL2020}. We focus here on methods of order 2. 

In this section, we perform numerical experiments in dimension 1 on $\Omega=(-50,50)$ with homogeneous Dirichlet boundary conditions, on the equation
\begin{equation}
\label{eq:chaleur}
\partial_t u(t,x)=\partial_{xx}^2 u(t,x)+u(t,x)^3,
\end{equation}
starting from
\begin{equation*}
u_0(x)=\frac12 \sin\left(\frac{\pi}{100}x+\frac{\pi}{2}\right),
\end{equation*}
for $t \in (0,T)$ with $T=1.0$. We use classical finite differences in space with $2^{14}$ points. We compare two linearly implicit methods (with uniform (Example \ref{ex:meth2etagesLobatto}) or Gauss points (Example \ref{ex:meth2etagesGauss}), the eigenvalues of the matrix $D$ being $\pm \frac12$ in both cases) with the classical methods of Crank-Nicolson and Strang splitting. A reference solution is computed with a classical Runge--Kutta method of order 10 with a small time step. For the initialization of the linearly implicit methods, we use one step of Strang splitting starting from $u_0$ over $[0,c_1h]$, $[0,c_2h]$ and $[0,h]$ and then run the method (see Section \ref{subsec:initialization}). Numerical results are displayed in Figure \ref{fig:chaleurordre2}.

\begin{figure}[!h]
\centering
\begin{tabular}{cc}
\resizebox{0.48\textwidth}{!}{
\begin{tikzpicture}

\definecolor{color0}{rgb}{0.12156862745098,0.466666666666667,0.705882352941177}
\definecolor{color1}{rgb}{1,0.498039215686275,0.0549019607843137}
\definecolor{color2}{rgb}{0.172549019607843,0.627450980392157,0.172549019607843}
\definecolor{color3}{rgb}{0.83921568627451,0.152941176470588,0.156862745098039}
\definecolor{color4}{rgb}{0.580392156862745,0.403921568627451,0.741176470588235}
\definecolor{color5}{rgb}{0.549019607843137,0.337254901960784,0.294117647058824}

\begin{axis}[
legend cell align={left},
legend columns=2,
legend style={
  fill opacity=0.8,
  draw opacity=1,
  text opacity=1,
  at={(0.57,0.03)},
  anchor=south west,
  draw=white!80!black
},
tick align=outside,
tick pos=left,
x grid style={darkgray176},
xlabel={time step \(\displaystyle h\) (\(\displaystyle \ln_{10}\) scale)},
xmin=-2.67131197061447, xmax=-1.00336549725873,
xtick style={color=black},
y grid style={darkgray176},
ylabel={final error (\(\displaystyle \ln_{10}\) scale)},
ymin=-9.46457308108112, ymax=-2.12731531619731,
ytick style={color=black}
]
\addplot [semithick, color0, mark=o, mark size=3, mark options={solid,fill opacity=0}, only marks]
table {%
-1.07918124604762 -2.5692592917292
-1.14612803567824 -2.68979636339957
-1.20411998265592 -2.79567769849672
-1.30102999566398 -2.9752088503594
-1.38021124171161 -3.12393483816584
-1.47712125471966 -3.30801442940528
-1.55630250076729 -3.45981787629456
-1.64345267648619 -3.62810486702793
-1.7160033436348 -3.76901247565403
-1.79239168949825 -3.91804646498948
-1.88081359228079 -4.0912841878078
-1.96378782734556 -4.25444058970553
-2.04139268515822 -4.407470874872
-2.12057393120585 -4.56397022437071
-2.19865708695442 -4.71859889178236
-2.27875360095283 -4.87747370708313
-2.35793484700045 -5.03474899970472
-2.43775056282039 -5.19346610184896
-2.51587384371168 -5.34896537993161
-2.59549622182557 -5.50757411991718
};
\addlegendentry{LI UP}

\addplot [semithick, color3, mark=diamond, mark size=3, mark options={solid,fill opacity=0}, only marks]
table {%
-1.07918124604762 -3.03751436401933
-1.14612803567824 -3.171648743364
-1.20411998265592 -3.287788810815
-1.30102999566398 -3.48179240811714
-1.38021124171161 -3.64025461910744
-1.47712125471966 -3.83415624593029
-1.55630250076729 -3.99256305416578
-1.64345267648619 -4.16689672713239
-1.7160033436348 -4.31201710975371
-1.79239168949825 -4.46480801642277
-1.88081359228079 -4.64166396249949
-1.96378782734556 -4.80762055183278
-2.04139268515822 -4.96283568512967
-2.12057393120585 -5.12120210497736
-2.19865708695442 -5.27737037551255
-2.27875360095283 -5.43756408517562
-2.35793484700045 -5.59592704979606
-2.43775056282039 -5.75555880716131
-2.51587384371168 -5.91180559563598
-2.59549622182557 -6.07105048095644
};
\addlegendentry{C-N}
\addplot [semithick, color2, mark=+, mark size=3, mark options={solid,fill opacity=0}, only marks]
table {%
-1.07918124604762 -6.10182585943264
-1.14612803567824 -6.23564898767978
-1.20411998265592 -6.35158368802422
-1.30102999566398 -6.54534931277796
-1.38021124171161 -6.70366419062781
-1.47712125471966 -6.89745287699438
-1.55630250076729 -7.05578841370453
-1.64345267648619 -7.23008793608839
-1.7160033436348 -7.37510157454337
-1.79239168949825 -7.52785839158715
-1.88081359228079 -7.70477601459009
-1.96378782734556 -7.87056834573293
-2.04139268515822 -8.02538563415332
-2.12057393120585 -8.18389079535676
-2.19865708695442 -8.34029072684988
-2.27875360095283 -8.49972311250433
-2.35793484700045 -8.65787514923195
-2.43775056282039 -8.81718315992945
-2.51587384371168 -8.97179362255891
-2.59549622182557 -9.12401225258933
};
\addlegendentry{Strang}

\addplot [semithick, color1, mark=asterisk, mark size=3, mark options={solid,fill opacity=0}, only marks]
table {%
-1.07918124604762 -2.48715659578116
-1.14612803567824 -2.60744639819277
-1.20411998265592 -2.71316142786296
-1.30102999566398 -2.89249122710084
-1.38021124171161 -3.04110619623038
-1.47712125471966 -3.22509594401369
-1.55630250076729 -3.37685252561483
-1.64345267648619 -3.54510679644994
-1.7160033436348 -3.68599765252497
-1.79239168949825 -3.83502105668107
-1.88081359228079 -4.00825259327386
-1.96378782734556 -4.17140698139533
-2.04139268515822 -4.32443741533993
-2.12057393120585 -4.48093810850527
-2.19865708695442 -4.63556874146544
-2.27875360095283 -4.79444586618479
-2.35793484700045 -4.95172350110959
-2.43775056282039 -5.11044288201331
-2.51587384371168 -5.26594423379472
-2.59549622182557 -5.42455488981226
};
\addlegendentry{LI GP}

\end{axis}

\end{tikzpicture}
  } & 
      \resizebox{0.47\textwidth}{!}{
\begin{tikzpicture}

\definecolor{crimson2143940}{RGB}{214,39,40}
\definecolor{darkgray176}{RGB}{176,176,176}
\definecolor{darkorange25512714}{RGB}{255,127,14}
\definecolor{forestgreen4416044}{RGB}{44,160,44}
\definecolor{lightgray204}{RGB}{204,204,204}
\definecolor{steelblue31119180}{RGB}{31,119,180}

\begin{axis}[
legend cell align={left},
legend columns=2,
legend style={
  fill opacity=0.8,
  draw opacity=1,
  text opacity=1,
  at={(0.03,0.97)},
  anchor=north west,
  draw=lightgray204
},
tick align=outside,
tick pos=left,
x grid style={darkgray176},
xlabel={final error (\(\displaystyle \ln_{10}\) scale)},
xmin=-9.45585503542974, xmax=-2.15531381294076,
xtick style={color=black},
y grid style={darkgray176},
ylabel={CPU time (\(\displaystyle \ln_{10}\) scale)},
ymin=-1.36931819391405, ymax=1.16892314852295,
ytick style={color=black}
]
\addplot [semithick, steelblue31119180, mark=*, mark size=2.5, mark options={solid,fill opacity=0}, only marks]
table {%
-2.5692592917292 -0.794747456997227
-2.68979636339957 -0.666424221344731
-2.79567769849672 -0.648642197566046
-2.9752088503594 -0.600104934206612
-3.12393483816584 -0.51446032901348
-3.30801442940528 -0.424902298267025
-3.45981787629456 -0.350337124906865
-3.62810486702793 -0.26682693995314
-3.76901247565403 -0.19679340643113
-3.91804646498948 -0.119776059664185
-4.0912841878078 -0.0332105323095815
-4.25444058970553 0.0637231311140894
-4.407470874872 0.125608149293742
-4.56397022437071 0.20439160976086
-4.71859889178236 0.282460162753743
-4.87747370708313 0.365218719498764
-5.03474899970472 0.451577053739771
-5.19346610184896 0.522206536213787
-5.34896537993161 0.604673171528444
-5.50757411991718 0.678620346279135
};
\addlegendentry{LI UP}
\addplot [semithick, crimson2143940, mark=diamond*, mark size=2.5, mark options={solid,fill opacity=0}, only marks]
table {%
-3.03751436401933 -0.256452560392847
-3.171648743364 -0.205415798154101
-3.287788810815 -0.162583188338139
-3.48179240811714 -0.10269469020786
-3.64025461910744 -0.0398499017742668
-3.83415624593029 0.0301335541572968
-3.99256305416578 0.0969278221891049
-4.16689672713239 0.182115436304375
-4.31201710975371 0.234312521046859
-4.46480801642277 0.311888386642733
-4.64166396249949 0.373964835754808
-4.80762055183278 0.431321738652984
-4.96283568512967 0.485834828084146
-5.12120210497736 0.547991942998046
-5.27737037551255 0.619003326741013
-5.43756408517562 0.693875411328195
-5.59592704979606 0.77238358936755
-5.75555880716131 0.862887740931786
-5.91180559563598 0.931705394151174
-6.07105048095644 1.05354854204854
};
\addlegendentry{C-N}
\addplot [semithick, forestgreen4416044, mark=+, mark size=2.5, mark options={solid,fill opacity=0}, only marks]
table {%
-6.10182585943264 -1.25394358743964
-6.23564898767978 -1.16547767936541
-6.35158368802422 -1.10374344189642
-6.54534931277796 -1.00453960589947
-6.70366419062781 -0.958903400617291
-6.89745287699438 -0.841453307908012
-7.05578841370453 -0.775127962481792
-7.23008793608839 -0.711234341584389
-7.37510157454337 -0.653713022525586
-7.52785839158715 -0.576753954141323
-7.70477601459009 -0.489305930095585
-7.87056834573293 -0.406565290549124
-8.02538563415332 -0.327754535618403
-8.18389079535676 -0.235631191149392
-8.34029072684988 -0.157492469044027
-8.49972311250433 -0.0719218300346352
-8.65787514923195 0.0290085080796776
-8.81718315992945 0.113132388702851
-8.97179362255891 0.183412568188209
-9.12401225258933 0.223277002083061
};
\addlegendentry{Strang}
\addplot [semithick, darkorange25512714, mark=asterisk, mark size=2.5, mark options={solid,fill opacity=0}, only marks]
table {%
-2.48715659578116 -0.695914738859703
-2.60744639819277 -0.633793839811288
-2.71316142786296 -0.584022259974821
-2.89249122710084 -0.48870478397461
-3.04110619623038 -0.412268874627477
-3.22509594401369 -0.302064594879792
-3.37685252561483 -0.174275483723716
-3.54510679644994 -0.113284774441228
-3.68599765252497 -0.0498202369989734
-3.83502105668107 -0.00488094455184998
-4.00825259327386 0.09274378235971
-4.17140698139533 0.170420086253765
-4.32443741533993 0.246050055007829
-4.48093810850527 0.32135930579031
-4.63556874146544 0.40223781892056
-4.79444586618479 0.478318261992494
-4.95172350110959 0.569695255631984
-5.11044288201331 0.656703794222199
-5.26594423379472 0.742856888776406
-5.42455488981226 0.798222642689417
};
\addlegendentry{LI GP}
\end{axis}

\end{tikzpicture}
      }
\end{tabular}
\caption{Comparison for $2^{14}$ points of methods of order 2 applied to the NLH equation \eqref{eq:chaleur} over $(-50,50)$ with Dirichlet boundary conditions.
  On the left panel, final numerical error as a function of the time step (logarithmic scales);
  on the right panel, CPU time (in seconds) as a function of the final numerical error
  (logarithmic scales).}
\label{fig:chaleurordre2}
\end{figure}
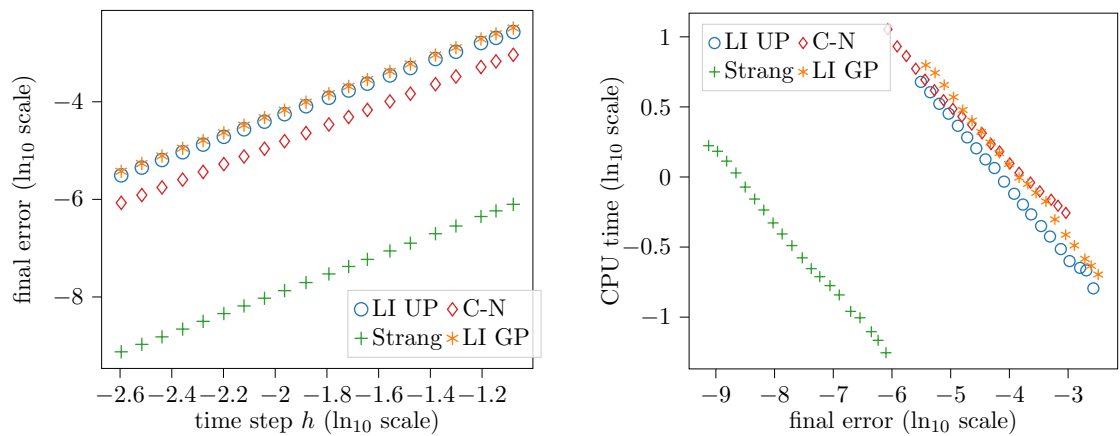

The underlying Runge--Kutta methods of both linearly implicit methods are $\hat{A}$-stable and the eignevalues of their matrix $D$ belong to the open unit disc. Therefore, Theorem \ref{th:convergence} ensures that they converge with order (at least) 2. The numerical experiment described above demonstrates numerically that both methods have order 2. 
For this numerical example, the Strang splitting appears to be the most precise and the most efficient of all four methods. The two linearly implicit methods have an efficiency which is comparable to that of the Crank--Nicolson method. 

\section{Conclusion and future works}
\label{sec:conclusion}

This paper extends the analysis of high order linearly implicit methods developped for ODEs in \cite{DL2020} to the numerical integration of evolution semilinear PDEs. It introduces several notions of stability for the underlying collocation Runge--Kutta method, the analysis of which is carried out in the compagnon paper \cite{DL2023RK}. Assuming these stability conditions as well as additional stability conditions for the extra variables, the main result of this paper states that linearly implicit methode with $s$ stages have order at least $s$ (Theorem \ref{th:convergence}). 

The necessity of all the stability conditions is illustrated numerically. Numerical experiments for the NLS and NLH equations illustrate the efficiency of the linearly implicit methods in dimensions 1 and 2 when compared with classical methods from the litterature. In particular one obtains better performance for high order linearly implicit methods in dimension 2. 

More extensive numerical experiments will be carried out in a forthcoming paper.


\section{Appendix}
\label{sec:appendix}

\subsection{Proof of Proposition \ref{prop:ASIstable}}
\label{subsec:proofASIstable}
This section is devoted to the proof of the estimates of Propositions \ref{prop:ASstable},
\ref{prop:ISstable}, \ref{prop:ASIstable} and \ref{prop:ISIstable}.
For the sake of brevity, we only write the proof of Proposition \ref{prop:ASIstable} in the case
of the torus.
The proof of the three other propositions can be derived using the same technique:
that of \ref{prop:ISIstable} is the very same, with a different localization of the spectrum
of $L$, and that of \ref{prop:ASstable} and \ref{prop:ISstable} has to be adapted for isomophisms
that are not endomorphisms.
That technique has to be adapted for other continuous cases,
for example when $L=\Delta$ or $L=i\Delta$ on $\R^d$, but the spirit is the same.
\begin{proof}
  Let us consider the case of the torus $\Omega=(\R/\Z)^d$, with the Laplace operator $L=\Delta$.
  Let $\sigma>0$ and $h>0$ be fixed. Since $L$ is an unbounded operator from $H^\sigma$ to itself,
  with domain $H^{\sigma+2}$, and is selfadjoint over $H^\sigma$, we have
  \begin{equation*}
    H^{\sigma} = \overline{\underset{\lambda\in {\rm Sp(L)}}{\bigoplus} {\rm Ker} (L-\lambda {\rm Id}_{H^\sigma})}.
  \end{equation*}
  Defining for $\lambda\in {\rm Sp}(L)$, $V_\lambda$ as $\left({\rm Ker}(L-\lambda{\rm Id}_{H^\sigma})\right)^s$, we infer
  \begin{equation}
    \label{eq:densiteHsigma}
    \left(H^\sigma\right)^s = \overline{\underset{\lambda\in {\rm Sp(L)}}{\bigoplus} V_\lambda}.
  \end{equation}
  Observe that, for all $\lambda\in {\rm Sp}(L)$, $V_\lambda$ is stable by $A\otimes L$.
  Moreover, for all $\lambda\in{\rm Sp}(L)$ and $(v_1,\cdots,v_s)\in V_\lambda$, we have
  \begin{equation}
    \label{eq:Vlambdastable}
    \left(A \otimes L\right)
    \begin{pmatrix}
      v_1\\
      \vdots\\
      v_s
    \end{pmatrix}
    = \lambda A
    \begin{pmatrix}
      v_1\\
      \vdots\\
      v_s
    \end{pmatrix}
    .
  \end{equation}
  Observe that ${\rm Sp}(L)\subset (-\infty,0]$.
  Let $\lambda \in {\rm Sp}(L)$ be fixed.
  We have $h\lambda \in \C^-$.
  In particular, with the hypothesis that the method defining $A$ is $ASI$-stable (see Definition
  \ref{def:stab}), the matrix $(I-h\lambda A)$ is invertible.
  This, together with \eqref{eq:Vlambdastable}, implies that
  $({\rm Id}_{(H^\sigma)^s}-h A \otimes L)_{| V_\lambda}$ is invertible from $V_\lambda$ to itself.
  Moreover, using the norm defined in \eqref{eq:norme3}, we have
  \begin{equation*}
    \left\|
      \left[\left({\rm Id}_{(H^\sigma)^s}-h A \otimes L\right)_{| V_\lambda}\right]^{-1}
    \right\|_{(H^\sigma)^s \to (H^\sigma)^s}
    =
    \left\|
      (I-h\lambda A)^{-1}
    \right\|_2,
  \end{equation*}
  where the norm in the right-hand side is the usual hermitian norm on $\C^s$.
  Since this holds for all $\lambda\in{\rm Sp}(L)$ and the spaces $V_{\lambda}$ are pairwise orthogonal,
  we can use \eqref{eq:densiteHsigma}
  to derive that that $({\rm Id}_{(H^\sigma)^s}-h\lambda A \otimes L)$
  is invertible from $(H^\sigma)^s$ to itself, with
  \begin{equation*}
    \left\|
      \left({\rm Id}_{(H^\sigma)^s}-h A \otimes L\right)^{-1}
    \right\|_{(H^\sigma)^s \to (H^\sigma)^s}
    \leq \underset{\lambda\in {\rm Sp}(L)}{\rm sup}
    \left\|
      (I-h\lambda A)^{-1}
    \right\|_2.
  \end{equation*}
  The hypothesis that the method defining $A$ is ASI-stable ensures that the right-hand side
  of the inequality above is bounded independantly of $h>0$.
  This concludes the proof.
\end{proof}

\subsection{Two technical lemmas}
\label{subsec:techlemmas}

\begin{lemma}
  \label{lem:lemmeananummat}
  Let $D\in\mathcal M_s(\C)$. We denote by $\rho(D)$ the spectral radius of the
  matrix $D$ and by $|\cdot|_2$ the usual hermitian norm on $\C^s$.
  For all $\varepsilon>0$, there exists an invertible matrix $P$ such that
  \begin{equation}
    \label{eq:lemmeananummat}
    \forall x\in\C^s,\qquad |P D P^{-1} x |_2 \leq \left(\rho(D)+\varepsilon\right) |x|_2.
  \end{equation}
\end{lemma}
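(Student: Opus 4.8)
The plan is to prove Lemma \ref{lem:lemmeananummat}, the statement that for any $D\in\mathcal M_s(\C)$ and any $\varepsilon>0$ there exists an invertible $P$ such that $|PDP^{-1}x|_2\leq(\rho(D)+\varepsilon)|x|_2$ for all $x\in\C^s$; equivalently, the operator norm of $PDP^{-1}$ induced by the hermitian norm is at most $\rho(D)+\varepsilon$. This is a classical fact (existence of a norm arbitrarily close to the spectral radius, realized as a conjugate of the hermitian norm), and the natural route is through the Jordan or Schur decomposition of $D$ followed by a diagonal rescaling.

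The cleanest approach is via the Schur decomposition. First I would write $D=UTU^{-1}$ with $U$ unitary and $T$ upper triangular, whose diagonal entries are the eigenvalues $\lambda_1,\dots,\lambda_s$ of $D$, so $|\lambda_k|\leq\rho(D)$ for all $k$. Next, for a scaling parameter $t>0$, set $D_t=\mathrm{diag}(t,t^2,\dots,t^s)$ and consider $D_t T D_t^{-1}$: this conjugation multiplies the $(i,j)$ entry of $T$ by $t^{i-j}$, so it leaves the diagonal unchanged while multiplying each strictly-upper-triangular entry $(i,j)$ (with $i<j$) by $t^{i-j}=t^{-(j-i)}$, which tends to $0$ as $t\to\infty$. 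Hence $D_tTD_t^{-1}$ converges entrywise (as $t\to\infty$) to the diagonal matrix $\mathrm{diag}(\lambda_1,\dots,\lambda_s)$, whose hermitian operator norm is exactly $\max_k|\lambda_k|\leq\rho(D)$. By continuity of the operator norm in the matrix entries, there exists $t_0$ large enough that $\|D_{t_0}TD_{t_0}^{-1}\|_2\leq\rho(D)+\varepsilon$. Then I would take $P=D_{t_0}U^{-1}$ (invertible as a product of invertible matrices): since $U$ is unitary, $\|PDP^{-1}x\|_2=\|D_{t_0}U^{-1}D UD_{t_0}^{-1}x\|_2=\|D_{t_0}TD_{t_0}^{-1}x\|_2\leq(\rho(D)+\varepsilon)\|x\|_2$, which is \eqref{eq:lemmeananummat}.

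There is essentially no serious obstacle here; the only point requiring a little care is the continuity argument that lets one pass from the entrywise limit to a bound on the operator norm — but this is immediate since all norms on the finite-dimensional space $\mathcal M_s(\C)$ are equivalent, so $T'\mapsto\|T'\|_2$ is continuous, and $\|\mathrm{diag}(\lambda_1,\dots,\lambda_s)\|_2=\max_k|\lambda_k|\leq\rho(D)$ exactly because conjugating by a diagonal unitary reduces the hermitian norm of a diagonal matrix to its largest modulus entry. One must also verify the elementary fact that conjugation by $D_t$ scales the $(i,j)$ entry by $t^{i-j}$, which is a one-line computation. If one prefers to avoid Schur and work with the Jordan form $D=SJS^{-1}$, the same diagonal rescaling $D_t$ shrinks the off-diagonal $1$'s in the Jordan blocks to $t^{-1}$, giving $\|D_tJD_t^{-1}\|_2\leq\rho(D)+t^{-1}$ for $t\geq 1$ by, e.g., Gershgorin or a direct estimate, and one takes $P=D_{t_0}S^{-1}$ with $t_0\geq 1/\varepsilon$; this variant produces an explicit $t_0$ but requires knowing the Jordan form exists over $\C$. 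Either way the proof is short, so I would present the Schur version for concreteness.
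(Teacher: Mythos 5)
Your proof is correct and follows essentially the same strategy as the paper: triangularize $D$ and then conjugate by a diagonal scaling matrix whose effect is to shrink the strictly upper-triangular entries to make the matrix arbitrarily close to diagonal in operator norm. The paper uses the Jordan form and an explicit quadratic estimate with the scaling $\mathrm{diag}(1,\delta,\dots,\delta^{s-1})$, whereas you use Schur plus a continuity-of-the-norm argument, but these are cosmetic variants of the same proof.
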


\begin{proof}
  Let $D$ and $\varepsilon$ as in the hypotheses.
  Using for example the Jordan reduction theorem for the matrix $D$,
  there exists an invertible matrix $U\in \mathcal M_s(\C)$ such that $UDU^{-1}$
  is upper triangular. The coefficients on the diagonal of $UDU^{-1}$ are therefore
  the complex eigenvalues $\lambda_1,\dots,\lambda_s$ of $D$.
  After conjugation by the diagonal matrix $M_\delta$ with
  diagonal elements $1,\delta,\cdots,\delta^{s-1}$ for $\delta\in(0,1)$, we obtain
  \begin{equation*}
    M_\delta U D (M_\delta U)^{-1} =
    \begin{pmatrix}
      \lambda_1 & \delta t_{1,2} & \delta^2 t_{1,3} & \dots & \delta^{s-1} t_{1,s} \\
      0 & \lambda_2 & \delta t_{2,1} & \dots & \delta^{s-2} t_{2,s}\\
      \vdots & \ddots & \ddots & \ddots & \vdots\\
      0 & & \ddots & \lambda_{s-1} & \delta t_{s-1,s}\\
      0 & \dots & \dots & 0 &\lambda_s
    \end{pmatrix},
  \end{equation*}
  where the coefficients $(t_{i,j})$ are that of the upper triangular part of $UDU^{-1}$.
  For all vector $u\in\C^s$ with $|u|_2\leq 1$, we infer
  \begin{eqnarray*}
    \lefteqn{|M_\delta U D (M_\delta U)^{-1} u|_2^2}\\
    & = & \sum_{k=1}^s
                                                 \left|\lambda_k u_k + \delta \sum_{j=k+1}^s \delta^{j-k-1} t_{k,j}u_j\right|^2\\
                                           & = & \sum_{k=1}^s \left[
                                                    |\lambda_k u_k|^2
                                                    +2\delta \Re\left(\overline{\lambda_k u_k}
                                                    \sum_{j=k+1}^s \delta^{j-k-1}t_{k,j}u_j\right)
                                                    +\delta^2 \left|\sum_{j=k+1}^s
                                                    \delta^{j-k-1}t_{k,j}u_j\right|^2
                                                    \right]\\
                                           & \leq & \sum_{k=1}^s \rho(D)^2 |u_k|^2 + 2\delta \sum_{k=1}^s \rho(D)|u|_\infty \sum_{j=k+1}^s
                                                    \delta^{j-k-1}|t_{k,j}||u_j|
                                                    + \delta^2
                                                    \sum_{k=1}^s \left|\sum_{j=k+1}^s
                                                    \delta^{j-k-1}|t_{k,j}||u_j|\right|^2\\
  & \leq & \rho(D)^2 + 2 C \delta \rho(D) s |u|_1 + \delta^2 C^2 s |u|_1^2,
  \end{eqnarray*}
  where $C$ denotes the maximum of the moduli of the $(t_{i,j})_{i<j}$ and we have used the fact that
  $|u|_\infty\leq|u|_2\leq 1$. Since we have $|u|_1\leq \sqrt{s}|u|_2\leq \sqrt{s}$, we infer
  finally that
  \begin{equation*}
    |M_\delta U D (M_\delta U)^{-1} u|_2^2 \leq \rho(D)^2 + 2Cs^{3/2}\delta\rho(D)+C^2\delta^2s^2.
  \end{equation*}
  Chosing $\delta\in(0,1)$ small enough to ensure that
  $\delta\left(2Cs^{3/2}\rho(D)+C^2\delta s^2\right)\leq \varepsilon^2$, we infer that
  for all $u\in\C^s$ with $|u|_2\leq 1$,
  \begin{equation*}
    |M_\delta U D (M_\delta U)^{-1} u|_2 \leq \sqrt{\rho(D)^2+\varepsilon^2} \leq \rho(D)+\varepsilon.
  \end{equation*}
  This implies \eqref{eq:lemmeananummat} with $P=M_\delta U$ by homogeneity.
\end{proof}

\begin{lemma}
  \label{lem:normeinduite}
  Let $D\in\mathcal M_s(\C)$. We denote by $\rho(D)$ the spectral radius of the
  matrix $D$.
  For all $\varepsilon>0$, there exists a norm $|\cdot|_D$ on $\C^s$ such that
  \begin{equation}
    \label{eq:lemmenormeinduitte}
    \forall x\in\C^s,\qquad |D x |_D \leq \left(\rho(D)+\varepsilon\right) |x|_D.
  \end{equation}
  In particular, for the norm $\vvvert\cdot\vvvert_D$ on $\mathcal M_s(\C)$ induced by $|\cdot|_D$, we have
  \begin{equation*}
    \vvvert D\vvvert_D \leq \rho(D)+\varepsilon.
  \end{equation*}
\end{lemma}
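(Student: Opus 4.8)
The plan is to deduce the statement immediately from Lemma \ref{lem:lemmeananummat}, which already contains all the algebraic substance. Fix $\varepsilon>0$ and let $P\in\mathcal M_s(\C)$ be the invertible matrix provided by Lemma \ref{lem:lemmeananummat}, so that $|PDP^{-1}x|_2\leq(\rho(D)+\varepsilon)|x|_2$ for every $x\in\C^s$. I would then simply \emph{define} the desired norm by pulling back the hermitian norm along $P$, setting $|x|_D=|Px|_2$ for all $x\in\C^s$.

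The first step is to check that $|\cdot|_D$ is genuinely a norm on $\C^s$. Homogeneity and the triangle inequality are inherited from $|\cdot|_2$ using the linearity of $x\mapsto Px$, and $|x|_D=0$ forces $Px=0$, hence $x=0$ because $P$ is invertible; positivity follows. The second step is the contraction estimate \eqref{eq:lemmenormeinduitte}: for $x\in\C^s$, writing $PDx=(PDP^{-1})(Px)$ gives
\[
|Dx|_D=|PDx|_2=|PDP^{-1}(Px)|_2\leq(\rho(D)+\varepsilon)\,|Px|_2=(\rho(D)+\varepsilon)\,|x|_D,
\]
where the inequality is exactly the conclusion of Lemma \ref{lem:lemmeananummat}. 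The third and last step is the bound on the induced operator norm: by definition $\vvvert D\vvvert_D=\sup_{|x|_D\leq 1}|Dx|_D$, and \eqref{eq:lemmenormeinduitte} gives directly $\vvvert D\vvvert_D\leq\rho(D)+\varepsilon$.

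There is essentially no obstacle here: the delicate part, namely the perturbed Jordan-form computation producing a conjugation that shrinks the hermitian norm by a factor as close to $\rho(D)$ as wished, has been front-loaded into Lemma \ref{lem:lemmeananummat}. The present lemma is only the routine reformulation "such a matrix estimate yields an adapted vector norm", the one minor verification being that the pullback of a norm along an invertible linear map is again a norm. Accordingly I expect the proof to be only a few lines long.
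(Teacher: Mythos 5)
Your proposal is correct and follows exactly the paper's own proof, which also defines $|x|_D=|Px|_2$ with $P$ the conjugating matrix from Lemma \ref{lem:lemmeananummat}; you merely spell out the routine verifications (that the pullback is a norm, and the computation $|Dx|_D=|PDP^{-1}(Px)|_2$) that the paper leaves implicit. Nothing to add.
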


\begin{proof}
  One obtains the result by setting for all $x\in\C^s$, $|x|_D=|Px|_2$ with the notations of Lemma
  \ref{lem:lemmeananummat}.
\end{proof}

\subsection{Underlying collocation Runge--Kutta methods used for numerical examples}
\label{subsec:examples}

This section is devoted to introducing the collocation Runge--Kutta methods used for numerical examples. Details on the stability properties (see Definition \ref{def:stab}) can be found in \cite{DL2023RK}.

\begin{example}[Collocation Runge--Kutta method with $s=2$ Gauss points]
  \label{ex:meth2etagesGauss}
For $s=2$ Gauss points the method reads,
\begin{equation}
  \label{eq:Gauss2}
\renewcommand\arraystretch{1.2}
\begin{array}
{c|cc}
 \frac{1}{2}-\frac{\sqrt{3}}{6} & \frac{1}{4} & \frac{1}{4}-\frac{\sqrt{3}}{6}\\
\frac{1}{2}+\frac{\sqrt{3}}{6} & \frac{1}{4}+\frac{\sqrt{3}}{6} & 1/4\\
\hline
& \frac{1}{2} & \frac{1}{2}
\end{array},
\end{equation}
  It is $\hat{A}$-stable (see \cite{DL2023RK}).
\end{example}

\begin{example}[Trapezoidal rule with $s=2$ stages]
  \label{ex:meth2etagesLobatto}
  The Runge--Kutta collocation method with $s=2$ and $(c_1,c_2)=(0,1)$ reads
    \[
\renewcommand\arraystretch{1.2}
\begin{array}
{c|cc}
 0 & 0 & 0 \\[1 mm]
 1 & \frac12 & \frac12 \\ [1 mm]
\hline
& \frac12 & \frac12
\end{array}.
\]

  It is $\hat{A}$-stable (see \cite{DL2023RK}).
\end{example}

\begin{example}[Collocation Runge--Kutta method with $s=2$ stages not $A$ nor $I$ stable]\label{ex:notAstable}
The Butcher tableau of the Runge--Kutta collocation method with $s=2$ and $(c_1,c_2)=(1/4,1/3)$ reads
    \[
\renewcommand\arraystretch{1.2}
\begin{array}
{c|cc}
 \frac14 & \frac58 & -\frac38 \\[1 mm]
\frac13 & \frac23 & -\frac13 \\ [1 mm]
\hline
& -2 & 3
\end{array}.
\]

  It is {\bf neither} $A$-stable {\bf nor} $I$-stable (see \cite{DL2023RK}).
\end{example}

\begin{example}[Lobatto collocation method with $s=4$ uniform points]\label{ex:4etages}
The Butcher tableau of the Range-Kutta collocation method with $s=4$ and $(c_1,c_2,c_3,c_4)=(0,\frac13,\frac23,1)$ reads:
\[
\renewcommand\arraystretch{1.2}
\begin{array}
{c|cccc}
0 & 0 & 0 & 0 & 0\\
1/3& 1/8 & 19/72 & -5/72 & 1/72\\
2/3 & 1/9 & 4/9 & 1/9 & 0\\
1& 1/8 & 3/8 & 3/8 & 1/8\\
\hline
& 1/8 & 3/8 & 3/8 & 1/8 
\end{array}.
\]
This method is $\hat{A}$-stable (see \cite{DL2023RK}).

\end{example}

\begin{example}[Collocation Runge--Kutta method with $s=5$ stages which is $A$, $AS$ but not $ASI$ stable]
\label{ex:5etagesAASpasASI}
  We consider the 5-stages Runge--Kutta collocation method defined by its Butcher tableau
  \begin{equation*}
    \renewcommand\arraystretch{1.2}
\begin{array}{c|ccccc}
\frac14 & {\frac{3259}{1440}}&-{\frac{1421}{720}}-
{\frac {21\,\sqrt {7}}{64}}&{\frac{163}{120}}&-{\frac{1421}{720}}+{
\frac {21\,\sqrt {7}}{64}}&{\frac{829}{1440}}\\
\frac12-\frac{\sqrt{7}}{14} & {\medskip}{
\frac { \kappa_{-} ^{2} \left( 281\,\sqrt {7}+1120
 \right) }{15435}}&-{\frac{343}{180}}-{\frac {107\,\sqrt {7}}{315}}&{
\frac { \kappa_{-} ^{2} \left( 106\,\sqrt {7}+455
 \right) }{10290}}&-{\frac { \kappa_{-} ^{2} \left( 
97\,\sqrt {7}+770 \right) }{17640}}&{\frac { \kappa_{-} ^{2} \left( 71\,\sqrt {7}+280 \right) }{15435}}\\
\frac12 & {\medskip}{\frac{203}{90}}&-{\frac{343}{180}}-{\frac {7\,
\sqrt {7}}{24}}&{\frac{22}{15}}&-{\frac{343}{180}}+{\frac {7\,\sqrt {7
                                 }}{24}}&{\frac{53}{90}}\\
\frac12+\frac{\sqrt{7}}{14} & {\medskip}-{\frac { \kappa_{+} ^{2} \left( 281\,\sqrt {7}-1120 \right) }{15435}}&{\frac {
 \kappa_{+} ^{2} \left( 97\,\sqrt {7}-770 \right) }{
17640}}&-{\frac { \kappa_{+} ^{2} \left( 106\,\sqrt {7
}-455 \right) }{10290}}&-{\frac{343}{180}}+{\frac {107\,\sqrt {7}}{315
}}&-{\frac { \kappa_{+} ^{2} \left( 71\,\sqrt {7}-280
    \right) }{15435}}\\
\frac34 & {\medskip}{\frac{363}{160}}&-{\frac{147}{
80}}-{\frac {21\,\sqrt {7}}{64}}&{\frac{63}{40}}&-{\frac{147}{80}}+{
                                                  \frac {21\,\sqrt {7}}{64}}&{\frac{93}{160}}\\
  \hline
  & \frac{128}{45} & \frac{-343}{90} & \frac{44}{15} & \frac{-343}{90} & \frac{128}{45}
\end{array},
\end{equation*}
where $\kappa_{\pm}=7\pm\sqrt{7}$. This method is $A$-stable, $AS$-stable, yet not $ASI$-stable
(see \cite{DL2023RK}). For the linearly implicit method, we choose $\lambda_i=i/6$
for $i\in\{1,\cdots,5\}$. Therefore, we have
\begin{equation*}
  \Theta =
  \left[ \begin {array}{c} \displaystyle{\frac{1153}{1944}}+{\frac {3707\,{\it c_1}}{
1944}}+{\frac {3283\, \left( {\it c_1}-1 \right) ^{2}}{7776}}+{\frac {
61\, \left( {\it c_1}-1 \right) ^{3}}{1944}}+{\frac {5\, \left( {\it c_1
}-1 \right) ^{4}}{7776}}\\ \displaystyle {\medskip}{\frac{1153}{1944}}+{
\frac {3707\,{\it c_2}}{1944}}+{\frac {3283\, \left( {\it c_2}-1
 \right) ^{2}}{7776}}+{\frac {61\, \left( {\it c_2}-1 \right) ^{3}}{
1944}}+{\frac {5\, \left( {\it c_2}-1 \right) ^{4}}{7776}}
\\ \displaystyle {\medskip}{\frac{1153}{1944}}+{\frac {3707\,{\it c_3}}{1944}
}+{\frac {3283\, \left( {\it c_3}-1 \right) ^{2}}{7776}}+{\frac {61\,
 \left( {\it c_3}-1 \right) ^{3}}{1944}}+{\frac {5\, \left( {\it c_3}-1
 \right) ^{4}}{7776}}\\ \displaystyle{\medskip}{\frac{1153}{1944}}+{\frac {
3707\,{\it c_4}}{1944}}+{\frac {3283\, \left( {\it c_4}-1 \right) ^{2}}{
7776}}+{\frac {61\, \left( {\it c4}-1 \right) ^{3}}{1944}}+{\frac {5\,
 \left( {\it c_4}-1 \right) ^{4}}{7776}}\\ \displaystyle{\medskip}{\frac{
1153}{1944}}+{\frac {3707\,{\it c_5}}{1944}}+{\frac {3283\, \left( {
\it c_5}-1 \right) ^{2}}{7776}}+{\frac {61\, \left( {\it c_5}-1 \right) 
^{3}}{1944}}+{\frac {5\, \left( {\it c_5}-1 \right) ^{4}}{7776}}
\end {array} \right] .
\end{equation*}

\end{example}

\begin{example}[Collocation Runge--Kutta with $s=5$ stages which is $I$ but not $A$ stable]
  \label{ex:IstablemaispasAstable}
  We consider the 5-stages Runge--Kutta collocation method defined by its Butcher tableau
  \begin{equation*}
    \renewcommand\arraystretch{1.2}
\begin{array}{c|ccccc}
\frac14 & \frac{4453}{2400} & -\frac{4347}{1600} & \frac{221}{120} & -\frac{1917}{1600} & \frac{1123}{2400}\\
\frac13 & \frac{3824}{2025} & -\frac{133}{50} & \frac{742}{405} & -\frac{179}{150} & \frac{944}{2025}\\
\frac12 & \frac{281}{150} & -\frac{513}{200} & \frac{29}{15} & -\frac{243}{200} & \frac{71}{150}\\
\frac23 & \frac{3808}{2025} & -\frac{194}{75} & \frac{824}{405} & -\frac{28}{25} & \frac{928}{2025}\\
\frac34 & \frac{1503}{800} & -\frac{4131}{1600} & \frac{81}{40} & -\frac{1701}{1600} & \frac{393}{800}\\
  \hline
  & \frac{176}{75} & -\frac{189}{50} & \frac{58}{15} & -\frac{189}{50} & \frac{176}{75} 
\end{array}.
\end{equation*}
This method is $I$-stable but not $A$-stable (see \cite{DL2023RK}).
\end{example}

\section*{Acknowledgments}
This work was partially supported by the Labex CEMPI (ANR-11-LABX-0007-01).

\bibliographystyle{plain}
\bibliography{labib}
\end{document}